\def\csname opt@stmaryrd.sty\endcsname
\theoremstyle{plain}
\newtheorem{thm}{Theorem}[section]
\newtheorem*{thm*}{Theorem}
\newtheorem{prop}[thm]{Proposition}
\newtheorem{cor}[thm]{Corollary}       
\newtheorem{lem}[thm]{Lemma}
\theoremstyle{definition} 
\newtheorem{defin}[thm]{Definition}
\newtheorem{rem}[thm]{Remark}
\newcommand{\D}{\mathsf{D}}
\renewcommand{\L}{\mathcal{L}}
\newcommand{\CS}{\mathcal{S}}
\newcommand{\Ch}{\mathrm{Ch}}
\newcommand{\adicR}{\mathrm{Ch}(R,I)}
\newcommand{\adicS}{\mathrm{Ch}(S,J)}
\newcommand{\LR}{\mathrm{Ch}(R,L_0^I)}
\newcommand{\LS}{\mathrm{Ch}(S,L_0^J)}
\newcommand{\exta}{\theta_*}
\newcommand{\resa}{\theta^*}
\renewcommand{\mod}[1]{\mathrm{Mod}_{#1}}
\newcommand{\adicmod}{{\widehat{\mathrm{Mod}}}\hspace{0pt}^{\,I}_R}
\newcommand{\Hom}{\mathrm{Hom}}
\newcommand{\RHom}{\mathbb{R}\mathrm{Hom}}
\newcommand{\Lotimes}{\otimes^{\mathbb{L}}}
\newcommand{\Z}{\mathbb{Z}}
\renewcommand{\epsilon}{\varepsilon}
\begin{document}
\begin{abstract}
Given a commutative ring $R$ and finitely generated ideal $I$, one can consider the classes of $I$-adically complete, $L_0^I$-complete and derived $I$-complete complexes. Under a mild assumption on the ideal $I$ called weak pro-regularity, these three notions of completions interact well. We consider the classes of $I$-adically complete, $L_0^I$-complete and derived $I$-complete complexes and prove that they present the same homotopy theory. Given a ring homomorphism $R \to S$, we then give necessary and sufficient conditions for the categories of complete $R$-complexes and the categories of complete $S$-complexes to have equivalent homotopy theories. This recovers and generalizes a result of Sather-Wagstaff and Wicklein on extended local (co)homology.
\end{abstract}

\title{The homotopy theory of complete modules}
\author{Luca Pol}
\address[Pol]{Fakult\"{a}t f\"{u}r Mathematik, Universit\"{a}t Regensburg, Universit\"{a}tsstra{\ss}e 31, 93053 Regensburg, Germany}
\email{luca.pol@ur.de}
\author{Jordan Williamson}
\address[Williamson]{Department of Algebra, Faculty of Mathematics and Physics, Charles University in Prague, Sokolovsk\'{a} 83, 186 75 Praha, Czech Republic}
\email{williamson@karlin.mff.cuni.cz}
\maketitle
\setcounter{tocdepth}{1}

\section{Introduction}
Let $R$ be a commutative ring and $I$ be an ideal of $R$. The $I$-adic completion of an $R$-module $$M_I^\wedge = \underset{\leftarrow}{\lim} \,M/I^n M$$ is a fundamental tool in commutative algebra as it provides a formal way to consider infinitesimal neighbourhoods. This geometric interpretation motivates the study of the category of $I$-adically complete $R$-modules, that is, those modules $M$ for which the canonical map $M \to M^\wedge_I$ is an isomorphism. 
This category admits kernels and cokernels but is not abelian (nor even exact) in general and this makes these modules difficult to work with. In addition, the completion functor suffers 
from bad homological behaviours; for example, it is neither left nor right exact in general.

Therefore for homological purposes, it is often convenient to consider its left derived functors $L_n^I$ which were first studied by Matlis~\cite{Matlis78} and Greenlees-May~\cite{GreenleesMay92}. Of particular note is the zeroth left derived functor $L_0^I$ which is right exact. By replacing the $I$-adic completion functor with $L_0^I$, we are led to study the category of $L_0^I$-complete modules, that is, of those modules $M$ for which the canonical map 
$M \to L_0^I M$ is an isomorphism. This is an abelian category which contains all of the $I$-adically complete modules and by work of Salch~\cite{Salch10}, is in fact the universal abelian extension of the category of $I$-adically complete modules, see Proposition~\ref{prop:exact} for a precise statement. Working with this extension provides good homological properties and often significant advantages. 

The $I$-adic completion and $L_0^I$-completion functors admit evident extensions to chain complexes of $R$-modules by levelwise application. In particular, we can consider complexes of 
$I$-adically complete modules and $L_0^I$-complete modules, and see them as objects in the unbounded derived category $\D(R)$. In the derived category, a good homological replacement for $I$-adic completion is its total left derived functor $\mathbb{L}(-)_I^\wedge$; this is defined using dg-projective resolutions and therefore it can be difficult to compute with. This often makes the study of complexes $M$ for which the canonical map $M \to \mathbb{L}(M)_I^\wedge$ is a quasi-isomorphism inaccessible.

However under a mild condition on the ideal $I$ called weak pro-regularity, the total left derived functor of $I$-adic completion is naturally quasi-isomorphic to a more tangible construction $\Lambda_I$, which we call the derived completion~\cite[5.25]{MGM}. For a complex $M$, the derived completion is defined by $\Lambda_IM = \RHom_R(K_\infty(I),M)$ where $K_\infty(I)$ is the stable Koszul complex of Definition~\ref{defn:koszul}. In light of this, it is natural to consider the 
category of derived complete complexes, namely those complexes for which 
$M \to \Lambda_I M$ is a quasi-isomorphism. 

The goal of this paper is to relate these categories of complete complexes. There are inclusions
\[\text{$I$-adically complete complexes} \hookrightarrow \text{$L_0^I$-complete complexes}\hookrightarrow \text{derived $I$-complete complexes}.\]
Each of these categories is symmetric monoidal, but otherwise they exhibit significantly different behaviour. Nonetheless, in this paper we show that each of these categories present the same homotopy theory. The following theorem appears in the body of the paper as Theorem~\ref{thm:main1}. 
\begin{thm}
Let $R$ be a commutative ring and $I$ be a weakly pro-regular ideal. There are symmetric monoidal Quillen equivalences 
\[I\text{-adically complete complexes} \simeq_Q L_0^I\text{-complete complexes} \simeq_Q \text{derived $I$-complete complexes}.\]  
\end{thm}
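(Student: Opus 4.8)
The plan is to establish the two Quillen equivalences by constructing model structures on each of the three categories of chain complexes and then exhibiting adjunctions realizing the inclusions that are both Quillen adjunctions and Quillen equivalences. Since the three categories come with natural inclusions running from $I$-adically complete complexes through $L_0^I$-complete complexes to derived $I$-complete complexes, I would set up each category as a suitable localization or Bousfield-type model structure on $\Ch(R)$ (or on the category of levelwise complete modules) in which the weak equivalences are detected by the relevant completion functor, and then verify that the inclusion functors admit the expected adjoints (completion on one side, forgetful-type functors on the other).

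\textbf{Setting up the model structures.} First I would construct, for each of the three notions of completeness, a model structure whose fibrant objects are the complete complexes and whose weak equivalences are the appropriate completed quasi-isomorphisms. The most natural route is to start from the projective (or injective) model structure on $\Ch(R)$ and perform a left Bousfield localization at the maps that become equivalences after applying the relevant completion functor; weak pro-regularity is precisely what guarantees that these completion functors are well-behaved enough (e.g.\ that $\Lambda_I$ agrees with the total left derived $I$-adic completion, by \cite[5.25]{MGM}) for the localizations to exist and to coincide on homotopy categories. A cleaner alternative, which I would pursue in parallel, is to put model structures directly on the categories $\adicmod$ of complete modules (respectively $L_0^I$-complete modules, derived complete objects), using the abelian structure on the $L_0^I$-complete modules noted in Proposition~\ref{prop:exact} to obtain a genuine abelian model category in the middle.

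\textbf{Realizing the equivalences.} With the model structures in place, I would show each inclusion is the right adjoint of a Quillen adjunction whose left adjoint is the corresponding completion/comparison functor. For the equivalence $L_0^I\text{-complete} \simeq_Q \text{derived }I\text{-complete}$, the key input is that derived completion $\Lambda_I$ and the levelwise $L_0^I$-completion induce the same localization up to quasi-isomorphism, so that the derived unit and counit of the adjunction are weak equivalences on cofibrant/fibrant objects. For $I\text{-adically complete} \simeq_Q L_0^I\text{-complete}$, I would exploit Salch's universality result (Proposition~\ref{prop:exact}) to show that the inclusion of $I$-adically complete complexes induces an equivalence on homotopy categories, checking the derived unit/counit criterion on a convenient set of cofibrant generators such as shifted free modules and their completions. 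Throughout I would verify that the functors are lax symmetric monoidal and that the monoidal unit and pushout-product axioms are compatible with the localization, upgrading the Quillen equivalences to symmetric monoidal ones.

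\textbf{Main obstacle.} The hardest part will be controlling the derived functors of completion at the level of model categories, specifically proving that the derived unit $M \to \Lambda_I M$ (and its $L_0^I$ analogue) is a weak equivalence for cofibrant $M$ and that completion sends weak equivalences between cofibrant objects to weak equivalences. This is where weak pro-regularity does the essential work, since it is what forces $\Lambda_I$ to commute with the relevant colimits and to be computed by the stable Koszul complex $K_\infty(I)$, allowing one to reduce the comparison of completion functors to a computation on Koszul complexes. A secondary subtlety is ensuring that the three model structures can be compared \emph{on the nose} rather than merely on homotopy categories; I expect this to require identifying a common presentation (for instance, all three as localizations of a single model structure on $\Ch(R)$ at related sets of maps) so that the identity-type functors are manifestly Quillen, with the completion adjunctions handling the remaining discrepancy.
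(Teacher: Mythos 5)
Your proposal has a genuine gap, and its primary route would not prove the stated theorem. Performing a left Bousfield localization of $\Ch(R)$ at the maps that become equivalences after completion produces a model structure on $\Ch(R)$ itself, not on the categories of complete complexes. Moreover, under weak pro-regularity the only homotopically meaningful such class is the class of $K(I)$-equivalences: the total left derived adic completion agrees with $\Lambda_I$ by Theorem~\ref{thm:local homology and L-completion}, while the \emph{levelwise} functors $(-)_I^\wedge$ and $L_0^I$ do not preserve quasi-isomorphisms, so ``maps inverted by levelwise completion'' do not even contain all quasi-isomorphisms and cannot define a left Bousfield localization of the projective model structure. Hence your ``related sets of maps'' all collapse to the single model category $L_{K(I)}\Ch(R)$: you would obtain three copies of the homotopy theory of derived complete complexes, which says nothing about the categories $\adicR$ and $\LR$ that the theorem is actually about. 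The paper instead equips the genuine categories $\adicR$ and $\LR$ with model structures whose weak equivalences are quasi-isomorphisms and whose fibrations are surjections, and only the derived complete category is presented as the localization $L_{K(I)}\Ch(R)$.

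Your secondary route (model structures directly on the categories of complete modules) is the correct one, but you pass over its hardest point. The category of $I$-adically complete modules is neither abelian nor exact, so no abelian-model-category or cotorsion-pair machinery applies to $\adicR$; the paper must build this model structure by hand (Proposition~\ref{prop:adic model structure}), creating factorizations by factoring in $\Ch(R)$ and then completing, and the crux --- that completing the middle term preserves the quasi-isomorphism --- is Lemma~\ref{lem:factoring preserves completion}, whose proof intertwines all three notions of completion (vanishing of higher $L_i^I$ on dg-projective cokernels, $L_0^I$-completeness of homology, derived completeness, and the identification $\Lambda_I \simeq \mathbb{L}(-)_I^\wedge$). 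Your ``main obstacle'' paragraph identifies control of derived completion on cofibrant objects, which is indeed needed, but not this more basic existence problem. Separately, your plan to prove $\adicR \simeq_Q \LR$ directly from Salch's universality (Proposition~\ref{prop:exact}) is unsupported: that result concerns exact subcategories of $\mod{R}$ and gives no handle on derived units or counits. The paper never proves this equivalence directly; it proves the two equivalences with $L_{K(I)}\Ch(R)$ --- one by an explicit dg-projective-resolution check of the derived unit and counit (Theorem~\ref{thm:cts QE}), the other quoted from prior work --- and then deduces the top equivalence by the two-out-of-three property of Quillen equivalences. You would need either to follow that route or to supply a genuinely new argument for the direct comparison.
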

More precisely, we show that that the category of $I$-adically complete complexes is symmetric monoidally Quillen equivalent to the category of derived $I$-complete complexes. 
The right hand Quillen equivalence was already known, see~\cite[3.7]{BHV20} for an $\infty$-categorical approach and \cite[6.10]{PolWilliamson} for a model categorical one. 

We also investigate how base change along a map of commutative rings interacts with the the Quillen equivalences of the previous theorem. The following theorem appears in the body of the paper as Theorem~\ref{thm:main2}.
\begin{thm}
Let $\theta\colon R \to S$ be a map of commutative rings and let $I$ and $J$ be weakly pro-regular ideals of $R$ and $S$ respectively such that $\sqrt{IS} = \sqrt{J}$. The vertical adjunctions in the diagram 
\[\begin{tikzcd}
\begin{tabular}{c} \text{$I$-adically complete} \\ \text{$R$-complexes} \end{tabular} \arrow[rr, hookrightarrow, yshift=-1mm] \arrow[ddd, "(-)_{J}^\wedge\circ\exta"', xshift=-1mm] \arrow[dr, yshift=-1mm] &  & \begin{tabular}{c} \text{$L_0^I$-complete} \\ \text{$R$-complexes} \end{tabular} \arrow[ddd, "L_0^{J}\exta"', xshift=-1mm] \arrow[ld, yshift=-1mm] \arrow[ll, "(-)_I^\wedge"', yshift=1mm]
 \\
 & \begin{tabular}{c} \text{derived $I$-complete} \\ \text{$R$-complexes} \end{tabular} \arrow[ul, "(-)_I^\wedge"', yshift=1mm] \arrow[d, "\exta"', xshift=-1mm] \arrow[ru, yshift=1mm, "L_0^I"] & \\
 & \begin{tabular}{c} \text{derived $J$-complete} \\ \text{$S$-complexes} \end{tabular}\arrow[dl, "(-)_{J}^\wedge"', yshift=1mm] \arrow[dr, yshift=1mm, "L_0^{J}"] \arrow[u, xshift=1mm, "\resa"'] & \\
\begin{tabular}{c} \text{$J$-adically complete} \\ \text{$S$-complexes} \end{tabular} \arrow[ur, yshift=-1mm] \arrow[rr, hookrightarrow, yshift=-1mm]\arrow[uuu, xshift=1mm, "\resa"'] &  & \begin{tabular}{c} \text{$L_0^J$-complete} \\ \text{$S$-complexes} \end{tabular} \arrow[uuu, xshift=1mm, "\resa"'] \arrow[lu, yshift=-1mm] \arrow[ll, "(-)_{J}^\wedge"', yshift=1mm]
\end{tikzcd} \]
are symmetric monoidal Quillen equivalences if and only if any of the following equivalent conditions hold:
\begin{itemize}
\item[(a)] $\Lambda_IR \to \Lambda_J S$ is an isomorphism in $\D(R)$;
\item[(b)] $K_\infty(I) \to K_\infty(J)$ is an isomorphism in $\D(R)$;
\item[(c)] $R_I^\wedge \to S_J^\wedge$ is an isomorphism of rings;
\item[(d)] $R/I \to S/IS$ is an isomorphism of rings.
\end{itemize}
\end{thm}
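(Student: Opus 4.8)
The plan is to prove the statement in two stages: first reduce the Quillen equivalence assertion to condition~(a), and then establish the cycle of equivalences between the four conditions. For the first stage, I would invoke Theorem~\ref{thm:main1}: since within each column all three flavours of completion present the same homotopy theory, it suffices to decide when a single one of the vertical adjunctions is a Quillen equivalence, and the most tractable choice is the base-change adjunction $(\exta,\resa)$ between the derived $I$-complete $R$-complexes and the derived $J$-complete $S$-complexes. Each of these is a cofibrantly generated, symmetric monoidal model category whose homotopy category is generated by the completed unit ($\Lambda_I R$ on the $R$-side and $\Lambda_J S$ on the $S$-side), and the derived functor $\exta$ is monoidal and preserves colimits, sending $\Lambda_I R$ to $\Lambda_J S$. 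A colimit-preserving monoidal functor between categories generated by their units is an equivalence precisely when it is an equivalence on the generator, so the adjunction is a Quillen equivalence if and only if the comparison map $\Lambda_I R \to \Lambda_J S$ is an isomorphism in $\D(R)$, which is condition~(a).

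For the equivalence of the conditions I would work throughout with the cofibre $C = \mathrm{cofib}(R \xrightarrow{\theta} S)$, viewed in $\D(R)$. The key preliminary observations are that $K_\infty(-)$ and $\Lambda_{(-)}$ depend only on the radical of the ideal, and that extension of scalars commutes with the stable Koszul construction, so that $\exta K_\infty(I) \simeq K_\infty(IS) \simeq K_\infty(J)$. Restricting along $\theta$ then gives $\resa K_\infty(J) \simeq S \Lotimes_R K_\infty(I)$ and, by Hom–tensor adjunction, $\resa \Lambda_J S \simeq \Lambda_I S$. Consequently the map in~(b) has cofibre $\Gamma_I C$ and the map in~(a) has cofibre $\Lambda_I C$, so (b) is the statement $\Gamma_I C \simeq 0$ and (a) is the statement $\Lambda_I C \simeq 0$. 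These are equivalent because, for a weakly pro-regular ideal, an object is acyclic for derived torsion if and only if it is acyclic for derived completion (both express $I$-locality); this yields (a)$\Leftrightarrow$(b).

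To bring in~(c) and~(d) I would continue along the same chain of $I$-local equivalences, which also identifies $\Lambda_I C \simeq 0$ with the vanishing $C \Lotimes_R R/I \simeq 0$ by a derived Nakayama argument; on $H_0$ this is the assertion that $R/I \to S/IS$ is an isomorphism, giving the connection to~(d). Passing to $H_0$ in~(a), and using that $H_0(\Lambda_I R) = L_0^I R = R_I^\wedge$ (and likewise for $S$, via $\sqrt{IS}=\sqrt J$ so that $S_J^\wedge = S_{IS}^\wedge$), recovers the ring isomorphism of~(c); conversely~(c) implies~(d) immediately by reducing the isomorphism $R_I^\wedge \cong S_J^\wedge$ modulo $I$. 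Throughout, the radical hypothesis $\sqrt{IS}=\sqrt J$ is exactly what lets me replace $J$ by $IS$ and thereby compare the two sides over the single base $R$.

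The step I expect to be the main obstacle is the derived Nakayama reduction, namely upgrading the statement about $C\Lotimes_R R/I$ to the ring-level condition~(d): one must verify that the $I$-complete object $\Lambda_I C$ is trivial exactly when its reduction modulo $I$ is, and control the higher homology of $S\Lotimes_R R/I$ so that the underived isomorphism $R/I\cong S/IS$ faithfully detects the derived vanishing. The secondary piece of bookkeeping, which I would handle before the reductions above, is checking that the base-change functors $\exta$ and $\resa$ are compatible with all three completion functors across the entire diagram, so that it is legitimate to test the Quillen equivalence on the single derived-complete vertical adjunction and transport the conclusion to the other two.
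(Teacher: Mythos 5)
There is a genuine gap, and it sits exactly where you predicted: the implication from condition (d) back to the other conditions. Your route identifies (a) and (b) with the vanishing of $R/I \Lotimes_R C$, where $C$ is the cofibre of $\theta$, and then claims that ``on $H_0$'' this is condition (d). The forward direction is fine, but the converse is not: the complex $R/I \Lotimes_R C$ has homology $\mathrm{Tor}_i^R(R/I,S)$ in all positive degrees $i\geq 2$, together with an exact sequence $0 \to \mathrm{Tor}_1^R(R/I,S) \to H_1(R/I \Lotimes_R C) \to R/I \to S/IS \to H_0(R/I\Lotimes_R C)\to 0$, so (d) only kills $H_0$ and part of $H_1$. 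Derived Nakayama (which in this paper is just Proposition~\ref{prop-equivalent}) cannot bridge this: it converts one form of vanishing of the \emph{whole} complex $R/I\Lotimes_R C$ into another (torsion versus completion), but it never upgrades $H_0$-vanishing to full vanishing, and $C$ itself is not derived complete, so there is no completeness to exploit. You flagged ``controlling the higher homology of $S\Lotimes_R R/I$'' as the main obstacle but gave no mechanism for it, and indeed no direct mechanism exists. The paper closes the cycle by a different path: (d)~$\Rightarrow$~(c) by an elementary argument with the limit systems ($R/I\cong S/IS$ forces $R/I^n \cong S/I^nS$ for all $n$, hence $R_I^\wedge \cong S_{IS}^\wedge \cong S_J^\wedge$ via Lemma~\ref{lem-condition ideals}), and then (c)~$\Rightarrow$~(a) using weak pro-regularity through Theorem~\ref{thm:local homology and L-completion}, which identifies $\Lambda_IR\simeq R_I^\wedge$ and $\Lambda_JS\simeq S_J^\wedge$. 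In other words, the Tor-vanishing you need is a \emph{consequence} of the theorem, obtained by detouring through the underived statement (c); it cannot be taken as an input.

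The first half of your proposal (reduction of the Quillen-equivalence assertion to condition (a)) is essentially sound and close to the paper: both arguments reduce, via Theorem~\ref{thm:main1} and two-out-of-three, to the single adjunction $(\exta,\resa)$ between the localized categories, and both test the derived unit at $R$. But your ``colimit-preserving monoidal functor between unit-generated categories'' principle is missing a necessary hypothesis: to bootstrap from the unit being an isomorphism at the generator to an isomorphism everywhere, the derived \emph{right} adjoint must preserve coproducts (equivalently, $\resa$ must send $K(J)$-equivalences to $K(I)$-equivalences); without this, the class of objects on which the unit is an isomorphism need not be localizing. Moreover, the generation of the homotopy category of derived complete complexes by $\Lambda_IR$ is not formal --- it relies on the MGM equivalence with the torsion category $\mathrm{Loc}(K_\infty(I))$. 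The paper packages precisely these two verifications (the projection formula for $\resa$ and the unit condition at $R$) into its appeal to the Left Localization Principle of~\cite{PolWilliamson}. These omissions are repairable; the gap at (d) requires an argument you do not have.
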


Sather-Wagstaff and Wicklein~\cite[4.13]{SWW16} show that if $R$ is Noetherian and $I$ is an ideal in $R$, then the category of derived $I$-complete $R$-complexes is equivalent to the category of derived $\widehat{I}$-complete $R_I^\wedge$-complexes where $\widehat{I}$ denotes the ideal $I\cdot R_I^\wedge$ of $R_I^\wedge$. Our theorem recovers and extends this result by considering change of base along an arbitrary ring homomorphism, and by providing a converse. This also gives the analogous result about derived torsion complexes which in particular recovers~\cite[4.12]{SWW16}, see Remark~\ref{rem:MGM}.  

\subsection*{Contribution of this paper and related work}
There has been a wealth of work on the various forms of completion and their interactions. Here we do not attempt to give a complete history, but to point out several key papers whose results are used here and which influenced our approach. Greenlees-May~\cite{GreenleesMay92, GreenleesMay95b} undertook research on the homological algebra of the left derived functors of $I$-adic completion and derived completion, with a particular focus on its relation with equivariant homotopy theory. Dwyer-Greenlees~\cite{DwyerGreenlees02} investigated derived torsion and completion (defined as certain Bousfield localizations) in a general setting using Morita theory. Alonso-Jeremias-Lipman~\cite{AJL} studied the derived completion in a geometric setting, and Schenzel~\cite{Schenzel03} and Porta-Shaul-Yekutieli~\cite{MGM} extended this work in the algebraic setting. Positselski~\cite{Positselski16, Positselski17, Positselski} has proved many results about the different forms of completions, and in particular, their relation to contramodules. We also note the work of Barthel-Heard-Valenzuela~\cite{BHV20} who have investigated derived completion and $L$-completion for comodules over a flat Hopf algebroid. Finally, we note the work of Lurie~\cite[\S 7]{SAG} which extends
some of the previously described work to the setting of ring spectra. 

The relationship between these papers in the literature is not entirely clear - for instance, the use of $L_0^I$-completion seems restricted to authors coming from a topological standing, and contramodules from the algebraic side. In fact, these two notions are the same, and this observation clarifies many relations between these papers, see Section~\ref{sec:contramodules} for more details. As such, in addition to the main results already described in the introduction, we hope that this paper collects together various helpful statements from the aforementioned papers and provides clear connections between the topological and algebraic sides. 

\subsection*{Conventions}
We always write the left adjoint on the top in an adjoint pair displayed horizontally, and on the left in an adjoint pair displayed vertically. We use $\mathbb{L}$ and $\mathbb{R}$ to denote total left and right derived functors respectively; for instance, $- \otimes_R^\mathbb{L} -$ denotes the derived tensor product of $R$-modules, and $\mathbb{R}\mathrm{Hom}_R(-,-)$ denotes the derived hom of $R$-modules.

Throughout the paper we work with commutative rings, ideals and complexes of modules. Instead, one could work with graded commutative rings, homogeneous ideals and dg-modules. For concreteness, we choose to work in the ungraded setting, but all the statements and proofs given easily transfer to the case when $R$ is a graded commutative ring. 

\subsection*{Acknowledgements}
We are grateful to Liran Shaul and Niall Taggart for their comments on a preliminary version of this paper, and to John Greenlees and Greg Stevenson for several helpful suggestions and discussions. The first author thanks the SFB 1085 Higher Invariants in Regensburg for support.
The second named author was supported by the grant GA~\v{C}R 20-02760Y from the Czech Science Foundation.

\section{\texorpdfstring{$I$}{I}-adic completion}
  In this section we recall a few important facts about $I$-adic completion. 
  For more details see~\cite[\S 10]{AM16}.  

\begin{defin}
Let $R$ be a commutative ring and $I$ be an ideal. 
The $I$-\emph{adic completion} of an $R$-module $M$ is given by $M_I^\wedge = \underset{\leftarrow}{\lim} \,M/I^n M.$ 
We say that the $R$-module $M$ is:
\begin{itemize}
\item \emph{$I$-adically complete} if the canonical map $\gamma_M\colon M \to M_I^\wedge$ is an isomorphism. 
\item \emph{$I$-adically separated} if the canonical map $\gamma_M\colon M \to M_I^\wedge$ is a monomorphism.
\end{itemize}
We write $\mod{R}$ for the category of $R$-modules and $\adicmod$ for the full subcategory of $I$-adically complete modules.
\end{defin}

\subsection{Topologies} We fix a commutative ring $R$ and an ideal $I$.
\begin{defin}
  The $I$-\emph{adic topology} on an $R$-module $M$ has a 
  basis of open sets given by the cosets $m + I^n M$ for all $m \in M$ and 
  $n\in\mathbb{N}$. 
  An $I$-\emph{adic module} is an $R$-module equipped with the $I$-adic topology.
  We denote by $\mod{R}^{\mathrm{adic}}$ the category of $I$-adic modules and 
  continuous module maps.
\end{defin}

\begin{lem}\label{lem:eq-adic}
  Let $f \colon M \to N$ be an $R$-module map. Then $f$ is continuous 
  with respect to the $I$-adic topology on $M$ and $N$. 
  In particular, the inclusion functor
  $ j\colon \mod{R}^{\mathrm{adic}} \to \mod{R}$ is an equivalence of 
  categories with inverse functor given by $(-)_{\mathrm{adic}}$ the 
  $I$-adic topology functor.
\end{lem}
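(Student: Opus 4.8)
The plan is to prove Lemma~\ref{lem:eq-adic} in two parts: first establish that every $R$-module map is automatically continuous for the $I$-adic topologies, and then deduce the claimed equivalence of categories from this. For the first part, I would fix an $R$-module map $f\colon M \to N$ and recall that continuity at the level of basic open sets suffices. A basic open set in $N$ has the form $n + I^k N$. To check continuity, I would take a point $m \in M$ with $f(m) \in n + I^k N$ and verify that $f$ maps the basic open neighbourhood $m + I^k M$ of $m$ into $n + I^k N$. The key computation is that since $f$ is $R$-linear, $f(m + I^k M) = f(m) + f(I^k M) \subseteq f(m) + I^k N$, where the containment $f(I^k M) \subseteq I^k N$ follows immediately from $R$-linearity: any element of $I^k M$ is a finite sum $\sum_j a_j x_j$ with $a_j \in I^k$ and $x_j \in M$, so $f(\sum_j a_j x_j) = \sum_j a_j f(x_j) \in I^k N$. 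Combined with $f(m) \in n + I^k N$, this gives $f(m + I^k M) \subseteq n + I^k N$, establishing continuity.

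For the second part, I would assemble this into the statement that $j$ is an equivalence of categories. The functor $(-)_{\mathrm{adic}}$ sends an $R$-module $M$ to the same underlying module equipped with the $I$-adic topology, and sends a module map to itself (now regarded as a continuous map, which is legitimate by the first part). The inclusion functor $j$ forgets the topology. I would then verify that these two functors are mutually inverse: the composite $j \circ (-)_{\mathrm{adic}}$ is the identity on $\mod{R}$ since equipping a module with the $I$-adic topology and then forgetting it returns the original module, and likewise on morphisms. For the other composite $(-)_{\mathrm{adic}} \circ j$ on $\mod{R}^{\mathrm{adic}}$, the essential point is that an $I$-adic module \emph{is} by definition an $R$-module carrying precisely the $I$-adic topology, so re-imposing the $I$-adic topology after forgetting changes nothing; on morphisms one uses that continuous maps of $I$-adic modules are in particular $R$-module maps, and every such map is continuous by the first part, so the morphism sets on both sides coincide. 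Both composites are thus literally the identity functors, which is stronger than merely being naturally isomorphic to them.

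I do not anticipate a genuine obstacle here, as the result is essentially formal once the automatic-continuity computation is in place. The only subtlety worth flagging is bookkeeping about morphisms: one must check that $\mathrm{Hom}_{\mod{R}^{\mathrm{adic}}}(M, N)$ and $\mathrm{Hom}_{\mod{R}}(M, N)$ are the same set, and this is exactly the content of the automatic-continuity statement, which shows the continuous maps are precisely all the $R$-linear maps. Strictly speaking, since both composite functors are the identity (not merely naturally isomorphic to it), one obtains an isomorphism of categories rather than just an equivalence; I would note this in passing but state the result as an equivalence as in the lemma. The entire argument relies only on the $R$-linearity of module maps interacting with the filtration by powers of $I$, so it transfers verbatim to the graded setting mentioned in the conventions.
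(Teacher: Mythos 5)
Your proof is correct and follows essentially the same route as the paper: the key computation $f(I^k M) \subseteq I^k N$ from $R$-linearity gives automatic continuity, and the equivalence then follows formally. The only cosmetic differences are that the paper reduces to continuity at $0$ by translation (where you check all basic opens directly) and concludes via full faithfulness plus essential surjectivity rather than exhibiting the two functors as strict mutual inverses.
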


\begin{proof}
  As we can always translate, it is enough to show that $f$ is 
  continuous at $0 \in M$. 
  Since $f$ is an $R$-module map, we have $I^n M \subset f^{-1}(I^n N)$. 
  This shows that there is an open neighbourhood of $0 \in M$ which 
  is sent into $I^n N$ by $f$. Therefore $f$ is continuous at $0 \in M$. 
  It follows that $\mod{R}^{\mathrm{adic}}$ sits inside $\mod{R}$ as 
  a full subcategory. As every $R$-module can be equipped with the $I$-adic topology,
  the inclusion functor is essentially surjective, 
  and hence an equivalence. 
\end{proof}

  The $I$-adic completion of an $R$-module $M$ also has an 
  \emph{inverse limit topology} which is defined as the limit topology of the 
  discrete spaces $M/I^n M$.  Using the universal property of the inverse limit 
  we obtain the following result.
  
\begin{lem}\label{lem-adic-universal}
  For any $I$-adically complete $R$-module $N$ and $R$-module 
  map $f \colon M \to N$, there exists a unique continuous $R$-module map 
  $\widehat{f} \colon M^{\wedge}_I \to N$ making the diagram 
  \[
  \begin{tikzcd}
   M \arrow[r, "f"] \arrow[d, "\gamma_M"'] & N \\
   M^{\wedge}_I \arrow[ru, "\widehat{f}"'] & 
  \end{tikzcd}
  \]
  commute. Here $M^{\wedge}_I$ and $N$ are endowed with the inverse limit 
  topologies.
\end{lem}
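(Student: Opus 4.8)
The statement is the universal property of the $I$-adic completion, and the plan is to build $\widehat{f}$ directly from the universal property of the inverse limit. First I would observe, exactly as in the proof of Lemma~\ref{lem:eq-adic}, that since $f$ is an $R$-module map we have $f(I^n M)\subseteq I^n N$, so $f$ descends to $R$-module maps $\bar{f}_n\colon M/I^n M \to N/I^n N$ for every $n$. These are compatible with the transition maps of the two towers, and hence assemble into a map of inverse systems.

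Next I would use that $N$ is $I$-adically complete, so that $\gamma_N\colon N \to N^\wedge_I = \varprojlim N/I^n N$ is an isomorphism. Composing the projection $\pi_n^M\colon M^\wedge_I \to M/I^n M$ with $\bar{f}_n$ gives a compatible family of maps $M^\wedge_I \to N/I^n N$, each continuous since the $\pi_n^M$ are continuous and the targets are discrete. By the universal property of the limit topology on $N^\wedge_I$, this family induces a unique continuous map $\varphi\colon M^\wedge_I \to N^\wedge_I$, and I set $\widehat{f} = \gamma_N^{-1}\circ\varphi$. Commutativity of the triangle is then checked after composing with each projection $\pi_n^N$: on the class of $m\in M$ the composite $\pi_n^N\circ\widehat{f}\circ\gamma_M$ is just $\bar{f}_n$ applied to the image of $m$, which agrees with $\pi_n^N\circ f$.

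For uniqueness, suppose $g\colon M^\wedge_I\to N$ is another continuous map with $g\circ\gamma_M=f$. The key point is that the image of $\gamma_M$ is dense in $M^\wedge_I$: a basic open neighbourhood of a point is the preimage under some $\pi_n^M$ of a coset, and since $\pi_n^M\circ\gamma_M$ is the canonical surjection $M\to M/I^n M$, every such neighbourhood meets $\gamma_M(M)$. As $N\cong\varprojlim N/I^n N$ is an inverse limit of discrete, hence Hausdorff, spaces, it is itself Hausdorff; two continuous maps into a Hausdorff space that agree on the dense subset $\gamma_M(M)$ must coincide, so $g=\widehat{f}$.

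The argument is almost entirely formal, and the only point requiring any care is uniqueness. I expect the main (minor) obstacle to be verifying the density of $\gamma_M(M)$ and the Hausdorffness of $N$ cleanly in terms of the inverse limit topology, rather than any substantive difficulty; once these two facts are in place, uniqueness is immediate and the continuity of $\widehat{f}$ is built into its construction.
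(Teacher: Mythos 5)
Your proof is correct and follows exactly the route the paper intends: the paper offers no written proof beyond the remark that the lemma follows ``using the universal property of the inverse limit,'' and your construction of $\widehat{f}$ from the induced maps $\bar{f}_n\colon M/I^nM \to N/I^nN$ is precisely that argument. Your uniqueness step via density of $\gamma_M(M)$ in $M^\wedge_I$ and Hausdorffness of the limit topology on $N$ is the standard (and correct) way to fill in the detail the paper leaves implicit.
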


If the ideal is finitely generated, then the $I$-adic and inverse limit topology coincide.
  
\begin{prop}[{\cite[3.6]{Yekutieli11}}]\label{prop:algebraic and analytic}
  Let $I$ be a finitely generated ideal in a commutative ring $R$. 
  Then the $I$-adic topology and the inverse limit topology on $M^{\wedge}_I$ 
  coincide. Moreover, $M^{\wedge}_I$ is $I$-adically complete. 
\end{prop}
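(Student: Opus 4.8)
The plan is to deduce both assertions from the single claim that, for each $n \geq 1$, the natural projection $\pi_n \colon M^{\wedge}_I \to M/I^n M$ is surjective with kernel exactly $I^n M^{\wedge}_I$; equivalently, that the induced map $\beta_n \colon M^{\wedge}_I / I^n M^{\wedge}_I \to M/I^n M$ is an isomorphism. Write $N = M^{\wedge}_I$. A neighbourhood basis of $0$ for the inverse limit topology on $N$ is given by the submodules $F_n = \ker \pi_n$, while a neighbourhood basis for the $I$-adic topology is given by the submodules $I^n N$, so the equality $F_n = I^n N$ shows at once that the two topologies coincide. For the second assertion, the isomorphisms $\beta_n$ are compatible with the transition maps, so passing to the limit gives $N^{\wedge}_I = \varprojlim_n N/I^n N \cong \varprojlim_n M/I^n M = N$, and one checks that under this identification $\gamma_N$ becomes the identity, whence $N$ is $I$-adically complete.

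Two of the three ingredients are formal and do not use finite generation. Surjectivity of $\pi_n$ follows from the definition of the inverse limit: an element $\bar m \in M/I^n M$ lifts to $m \in M$, and $\gamma_M(m)$ maps to $\bar m$. The inclusion $I^n N \subseteq F_n$ holds because $\pi_n$ is $R$-linear with target annihilated by $I^n$. Together these give surjectivity of $\beta_n$ and one of the two inclusions needed for the claim.

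The heart of the matter, and where finite generation is indispensable, is the reverse inclusion $F_n \subseteq I^n N$. Writing $I = (a_1, \dots, a_r)$, the power $I^n$ is generated by the finitely many monomials $a^\alpha = a_1^{\alpha_1}\cdots a_r^{\alpha_r}$ with $|\alpha| = n$, and crucially $I^{n+s} M = \sum_{|\alpha| = n} a^\alpha (I^s M)$ with the \emph{same} fixed finite set of leading coefficients for every $s$. Given $x = (x_k)_k \in F_n$, I would construct coefficients $x_\alpha \in N$ with $x = \sum_{|\alpha| = n} a^\alpha x_\alpha$ by successive approximation: since $x_n = 0$ one lifts $x_{n+1}$ to an element of $I^n M$ and expands it in the $a^\alpha$; at each subsequent stage the discrepancy between $\sum_\alpha a^\alpha (\text{partial sum})$ and $x$ lies in a deeper power $I^{n+s} M$, which by the displayed identity can be absorbed by correcting each coefficient by an element of $I^s M$. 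Since the corrections lie in $I^s M$ with $s \to \infty$, each series $x_\alpha = \sum_{s \geq 0} (\text{correction})$ converges in the (complete) inverse limit topology to an element of $N$, and by construction $\sum_\alpha a^\alpha x_\alpha = x$.

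I expect this convergent successive-approximation step to be the main obstacle, precisely because it relies on being able to use a single finite generating set of $I^n$ uniformly at every stage: for a non-finitely-generated ideal the corrections could spread over infinitely many generators, and one could no longer package the answer as a finite sum $\sum_\alpha a^\alpha x_\alpha$, so the argument would break down. The remaining bookkeeping — verifying coherence of the constructed sequences and matching $\sum_\alpha a^\alpha x_\alpha$ with $x$ level by level in each $M/I^k M$ — is routine and I would not spell it out in detail.
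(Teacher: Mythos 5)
Your proof is correct, and since the paper offers no proof of its own for this proposition (it is quoted directly from Yekutieli's work), the relevant comparison is with the cited literature, which your argument reproduces in essentially the standard form. The core step --- showing $\ker\bigl(\pi_n \colon M^\wedge_I \to M/I^nM\bigr) = I^n M^\wedge_I$ by successive approximation, absorbing each discrepancy via the identity $I^{n+s}M = \sum_{|\alpha|=n} a^\alpha (I^s M)$ with one fixed finite generating set of $I^n$, and then reading off both the equality of topologies and completeness --- is exactly the classical argument (as in the cited reference, or Stacks Project Tag 05GG), and your diagnosis of where finite generation is indispensable is accurate.
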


As a consequence we obtain the folllowing.
  
\begin{lem}\label{lem:adic adjunction}
  Let $R$ be a commutative ring and $I$ be a finitely generated ideal. 
  There exists an adjoint pair
  \[\begin{tikzcd}
  \mod{R} \arrow[rr, yshift=1mm, "(-)_I^\wedge"] \arrow[rr, hookleftarrow, yshift=-1mm, "i"'] & & \adicmod.
  \end{tikzcd} \]
\end{lem}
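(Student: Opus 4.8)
The plan is to exhibit $(-)_I^\wedge$ as left adjoint to the inclusion $i$, with unit given by the canonical map $\gamma_M \colon M \to M_I^\wedge$. First I would check that this unit is well-defined as a map landing in $\adicmod$: by Proposition~\ref{prop:algebraic and analytic}, the finite generation of $I$ guarantees that $M_I^\wedge$ is itself $I$-adically complete, so $(-)_I^\wedge$ really does take values in $\adicmod$. It then suffices to verify the universal property of the unit, namely that for every $N \in \adicmod$ and every $R$-module map $f \colon M \to N$ there is a \emph{unique} $R$-module map $\bar{f} \colon M_I^\wedge \to N$ with $\bar{f} \circ \gamma_M = f$; this is precisely the natural bijection $\Hom_{\mod{R}}(M, iN) \cong \Hom_{\adicmod}(M_I^\wedge, N)$ defining the adjunction.

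For existence I would apply Lemma~\ref{lem-adic-universal} directly: since $N$ is $I$-adically complete, it furnishes a unique \emph{continuous} map $\widehat{f} \colon M_I^\wedge \to N$ (for the inverse limit topologies) factoring $f$ through $\gamma_M$. In particular $\widehat{f}$ is an $R$-module map, so taking $\bar{f} := \widehat{f}$ gives existence.

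The delicate point, and the place where the finite generation hypothesis is essential, is uniqueness: Lemma~\ref{lem-adic-universal} only asserts uniqueness among \emph{continuous} maps, whereas the adjunction demands uniqueness among \emph{all} $R$-module maps. To bridge this gap, suppose $g \colon M_I^\wedge \to N$ is any $R$-module map with $g \circ \gamma_M = f$. Because $I$ is finitely generated, Proposition~\ref{prop:algebraic and analytic} identifies the inverse limit topology on $M_I^\wedge$ with its $I$-adic topology, and $N$ carries its $I$-adic topology as well. Lemma~\ref{lem:eq-adic} then shows that every $R$-module map between $I$-adic modules is automatically continuous, so $g$ is continuous; the uniqueness clause of Lemma~\ref{lem-adic-universal} forces $g = \widehat{f}$. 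This is the step I expect to be the main obstacle, since without finite generation the inverse limit and $I$-adic topologies on $M_I^\wedge$ may differ, and the identification of $R$-module maps with continuous maps would break down.

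Finally, naturality of the bijection in both variables follows formally from the uniqueness of the factorization, so the universal property of $\gamma_M$ upgrades to the claimed adjunction $(-)_I^\wedge \dashv i$.
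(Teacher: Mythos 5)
Your proposal is correct and follows essentially the same route as the paper: the paper's (terse) proof likewise combines Proposition~\ref{prop:algebraic and analytic} (to identify the inverse limit and $I$-adic topologies, for $I$ finitely generated) with Lemma~\ref{lem:eq-adic} (automatic continuity of module maps) and Lemma~\ref{lem-adic-universal} (the universal property for continuous maps). You have merely made explicit the step the paper leaves implicit, namely upgrading uniqueness among continuous maps to uniqueness among all $R$-module maps.
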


\begin{proof}
  By Proposition~\ref{prop:algebraic and analytic} the inverse limit topology 
  coincides with the $I$-adic topology. The existence of the adjoint pair then 
  follows from Lemmas~\ref{lem:eq-adic} and~\ref{lem-adic-universal}.
\end{proof}

\subsection{Additional structure}
  Let $I$ be a finitely generated ideal. 
  The category $\adicmod$ has all limits and colimits. 
  Using Lemma~\ref{lem:adic adjunction} we see that limits are calculated in the 
  underlying category of $R$-modules, and colimits are calculated by completing 
  the colimits in the underlying category of $R$-modules. In particular, 
  $\adicmod$ has kernels and cokernels where the latter need to be 
  completed. However, it is not an abelian category in general as images might not 
  coincide with coimages, see~\cite[4.3]{Vanderpool12} for an example.   
  One might still hope that $\adicmod$ is exact. 
  However, Simon~\cite[2.5]{Simon09} 
  shows that $\adicmod$ is not closed under extensions, and 
  therefore does not form an exact category where the exact sequences are the 
  usual short exact sequences of $R$-modules.
  
The tensor product of $I$-adically complete $R$-modules need not be $I$-adically complete. However, we will now show that $\adicmod$ admits the structure of a closed symmetric monoidal category, with the tensor product given by the completed tensor product.
  
\begin{lem}\label{lem:internal hom}
Let $R$ be a commutative ring, $I$ be a finitely generated ideal, and $M$ and $N$ be $R$-modules.
\begin{itemize}
\item[(a)] If $N$ is $I$-adically complete then $\mathrm{Hom}_R(M,N)$ is $I$-adically complete.
\item[(b)] The natural map $(M \otimes_R N)_I^\wedge \to (M_I^\wedge \otimes_R N_I^\wedge)_I^\wedge$ is an isomorphism.
\end{itemize}
\end{lem}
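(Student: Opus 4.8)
The plan is to handle the two parts separately, using throughout that any limit of $I$-adically complete modules is computed in $\mod{R}$ and is again complete (Lemma~\ref{lem:adic adjunction}). For part (a) I would reduce to free modules: since $\Hom_R(\bigoplus_S R, N)\cong \prod_S N$ and a product is a limit, hence computed in $\mod{R}$, the module $\Hom_R(\bigoplus_S R,N)$ is complete whenever $N$ is. For a general $M$ I would choose a free presentation $\bigoplus_T R \to \bigoplus_S R \to M \to 0$ and apply $\Hom_R(-,N)$; by left exactness this realizes $\Hom_R(M,N)$ as the kernel of a map $\prod_S N \to \prod_T N$ of complete modules, and a kernel is again a limit computed in $\mod{R}$, hence complete.

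For part (b) I would factor the natural map as
\[(M\otimes_R N)_I^\wedge \to (M_I^\wedge\otimes_R N)_I^\wedge \to (M_I^\wedge\otimes_R N_I^\wedge)_I^\wedge,\]
the completions of $\gamma_M\otimes\mathrm{id}_N$ and $\mathrm{id}_{M_I^\wedge}\otimes\gamma_N$. Both arrows have the shape $(\gamma_L\otimes\mathrm{id}_P)_I^\wedge$, so by symmetry of the tensor product it is enough to prove that $(\gamma_M\otimes\mathrm{id}_P)_I^\wedge$ is an isomorphism for every $R$-module $P$. As $(-)_I^\wedge$ is the inverse limit of the reductions modulo $I^n$ and a limit of isomorphisms is an isomorphism, I would reduce to checking that $\gamma_M\otimes\mathrm{id}_P$ becomes an isomorphism after applying $-\otimes_R R/I^n$. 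Using the identification $(A\otimes_R B)\otimes_R R/I^n\cong A/I^nA\otimes_{R/I^n}B/I^nB$, this map becomes $(\gamma_M\bmod I^n)\otimes\mathrm{id}$, so the whole of (b) reduces to the single statement that $\gamma_M$ induces an isomorphism $M/I^nM\to M_I^\wedge/I^nM_I^\wedge$ for all $n$.

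The main obstacle is exactly this last isomorphism, and it is the only point where finite generation of $I$ is used in an essential way. The inclusion $I^nM_I^\wedge\subseteq\ker(M_I^\wedge\to M/I^nM)$ is immediate; proving the reverse containment (equivalently, that $I^nM_I^\wedge$ is closed in the inverse-limit topology) requires that $I$ be finitely generated and is closely related to the coincidence of the $I$-adic and inverse-limit topologies in Proposition~\ref{prop:algebraic and analytic}. I would invoke this standard fact (see~\cite{Yekutieli11}) rather than reprove it; granted it, the reductions above are formal, and the only remaining care is to check naturality so that the factorization in part (b) is well defined.
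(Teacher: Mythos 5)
Your proof is correct, and for part (a) it is the same argument as the paper's: a free presentation exhibits $\Hom_R(M,N)$ as the kernel of a map of products of copies of $N$, and complete modules are closed under products and kernels. For part (b) you take a genuinely different route. The paper makes the same initial reduction (by symmetry, to showing $(M\otimes_R N)_I^\wedge \to (M_I^\wedge\otimes_R N)_I^\wedge$ is an isomorphism), but then argues formally: by the Yoneda lemma it suffices to test against an arbitrary $I$-adically complete $Z$, and the tensor-hom adjunction together with the completion adjunction of Lemma~\ref{lem:adic adjunction} identify the two mapping sets with $\Hom_R(M_I^\wedge,\Hom_R(N,Z))$ and $\Hom_R(M,\Hom_R(N,Z))$; since $\Hom_R(N,Z)$ is complete by part (a), the adjunction finishes the proof, so in the paper (b) is a purely formal consequence of (a) and the already-established adjunction. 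Your argument instead reduces modulo $I^n$ and passes to the limit, resting on the single nontrivial input that $\gamma_M$ induces an isomorphism $M/I^nM \to M_I^\wedge/I^nM_I^\wedge$, equivalently $\ker(M_I^\wedge \to M/I^nM)=I^nM_I^\wedge$, for finitely generated $I$. This is indeed standard (\cite[Tag 05GG]{stacks-project}), but note it does not follow verbatim from the statement of Proposition~\ref{prop:algebraic and analytic}: coincidence of the two topologies only says each $I^nM_I^\wedge$ contains some $\ker(M_I^\wedge\to M/I^mM)$. Your parenthetical reformulation is the right bridge: $I^nM_I^\wedge$ is automatically closed in the $I$-adic topology, and its closure in the inverse-limit topology is exactly $\ker(M_I^\wedge\to M/I^nM)$, so coincidence of topologies does upgrade to the equality you need (alternatively, just cite the Stacks Project). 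The trade-off: your version of (b) is independent of part (a) and of Lemma~\ref{lem:adic adjunction}, is an explicit inverse-limit computation, and isolates exactly where finite generation of $I$ enters; the paper's version is shorter given the surrounding infrastructure and requires no further facts about completions beyond those already proved.
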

\begin{proof}
Taking a free presentation of $M$ gives an exact sequence $$0 \to \mathrm{Hom}_R(M,N) \to \prod_JN \to \prod_KN$$ from which part (a) follows, since $I$-adically complete modules are closed under kernels and products.

Let us now prove part (b). By symmetry of the tensor product, it is enough to check that $(M \otimes_R N)_I^\wedge \to (M_I^\wedge \otimes_R N)_I^\wedge$ is an isomorphism. By the Yoneda lemma, it suffices to check that the induced map $$\mathrm{Hom}_R((M_I^\wedge \otimes_R N)_I^\wedge, Z) \to \mathrm{Hom}_R((M \otimes_R N)_I^\wedge, Z)$$ is an isomorphism for all $I$-adically complete $Z$. By adjunction, this is equivalent to checking that $$\mathrm{Hom}_R(M_I^\wedge, \mathrm{Hom}_R(N, Z)) \to \mathrm{Hom}_R(M, \mathrm{Hom}_R(N,Z))$$ is an isomorphism. Since $\mathrm{Hom}_R(N,Z)$ is $I$-adically complete by part (a) the result follows.
\end{proof}

\begin{prop}\label{prop:adic-sym-mon}
Let $R$ be a commutative ring and $I$ be a finitely generated ideal. The category $\adicmod$ is closed symmetric monoidal, with tensor product given by $(M \otimes_R N)_I^\wedge$ and internal hom $\mathrm{Hom}_R(M,N).$ In particular, the $I$-adic completion functor $(-)_I^\wedge\colon \mod{R} \to \adicmod$ is strong symmetric monoidal.
\end{prop}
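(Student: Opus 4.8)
The plan is to obtain the closed symmetric monoidal structure on $\adicmod$ by transferring the standard one $(\otimes_R, \Hom_R)$ on $\mod{R}$ across the reflection $(-)_I^\wedge \colon \mod{R} \rightleftarrows \adicmod \colon i$ of Lemma~\ref{lem:adic adjunction}, in which $i$ is fully faithful and $(-)_I^\wedge$ is its left adjoint. The appropriate general tool is Day's reflection theorem: for a reflective subcategory $i\colon \mathcal{D} \hookrightarrow \mathcal{C}$ of a closed symmetric monoidal category $\mathcal{C}$ with reflector $L$, the subcategory $\mathcal{D}$ inherits a closed symmetric monoidal structure --- with tensor product $L(iM \otimes_\mathcal{C} iN)$, internal hom the restriction of the internal hom of $\mathcal{C}$, and with $L$ strong symmetric monoidal --- exactly when the internal hom object of $M$ and $Z$ computed in $\mathcal{C}$ is local for every $M \in \mathcal{C}$ and every local object $Z \in \mathcal{D}$. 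Thus the entire proposition reduces to checking this single locality condition, together with identifying the resulting structure maps.

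First I would verify the hypothesis: with $\mathcal{C} = \mod{R}$, $\mathcal{D} = \adicmod$, and $L = (-)_I^\wedge$, the locality condition asks that $\Hom_R(M, Z)$ be $I$-adically complete whenever $Z$ is, which is precisely Lemma~\ref{lem:internal hom}(a). Granting this, the reflection theorem produces a closed symmetric monoidal structure on $\adicmod$ whose tensor product on complete modules $M, N$ is $L(M \otimes_R N) = (M \otimes_R N)_I^\wedge$ and whose internal hom is $\Hom_R(M,N)$ --- well-defined into $\adicmod$ by part (a) --- with unit $R_I^\wedge = L(R)$, and whose associativity, unit and symmetry isomorphisms are reflected from those of $\mod{R}$. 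In fact the tensor--hom adjunction can be seen directly: for complete $M,N,Z$ one has $\Hom_{\adicmod}((M\otimes_R N)_I^\wedge, Z) \cong \Hom_{\mod{R}}(M \otimes_R N, Z) \cong \Hom_{\mod{R}}(M, \Hom_R(N,Z)) \cong \Hom_{\adicmod}(M, \Hom_R(N,Z))$, where the first isomorphism is the reflection adjunction of Lemma~\ref{lem:adic adjunction}, the second is the ordinary tensor--hom adjunction, and the last uses Lemma~\ref{lem:internal hom}(a).

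Finally, strong symmetric monoidality of $(-)_I^\wedge$ is part of the conclusion of the reflection theorem, and concretely its structure constraint comparing $(-)_I^\wedge(M) \otimes (-)_I^\wedge(N) = (M_I^\wedge \otimes_R N_I^\wedge)_I^\wedge$ with $(M \otimes_R N)_I^\wedge$ is exactly the comparison map of Lemma~\ref{lem:internal hom}(b), hence an isomorphism. So parts (a) and (b) of Lemma~\ref{lem:internal hom} supply precisely the two inputs needed. I expect the only real obstacle to be bookkeeping: confirming that the coherence data of $\mod{R}$ genuinely descends along the reflection, which is what Day's theorem guarantees and which otherwise would require verifying the pentagon and hexagon identities for $(- \otimes_R -)_I^\wedge$ by hand. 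The monoidal content itself is entirely contained in Lemma~\ref{lem:internal hom}.
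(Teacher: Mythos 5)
Your proof is correct, but it assembles the monoidal structure by a different mechanism than the paper does. The paper's own proof is a direct one: it cites both parts of Lemma~\ref{lem:internal hom} --- part (a) to make $\Hom_R(M,N)$ a well-defined internal hom and part (b) to make $(M \otimes_R N)_I^\wedge$ a well-defined tensor product with $(-)_I^\wedge$ strong monoidal --- and then asserts that the coherence (pentagon, hexagon, unit) diagrams for $\adicmod$ commute because they are obtained by applying $(-)_I^\wedge$ to the corresponding diagrams in $\mod{R}$. You instead invoke Day's reflection theorem for the reflective subcategory $i\colon \adicmod \hookrightarrow \mod{R}$ with reflector $(-)_I^\wedge$ from Lemma~\ref{lem:adic adjunction}, whose hypothesis is exactly Lemma~\ref{lem:internal hom}(a). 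This buys two things: the coherence bookkeeping that the paper waves at is discharged once and for all by the general theorem, and it becomes visible that part (a) is the \emph{only} essential input --- in Day's theorem the locality of internal homs into complete modules is equivalent to the comparison map of Lemma~\ref{lem:internal hom}(b) being an isomorphism, so (b) is a formal consequence rather than an independent ingredient. (Indeed, the paper's proof of (b), via Yoneda and the tensor--hom adjunction against complete test objects $Z$, is precisely the argument that proves this implication inside Day's theorem; your proposal makes that implicit categorical principle explicit.) What the paper's route buys in exchange is self-containedness: it needs no appeal to external category-theoretic machinery, only the two lemmas already proved. Both arguments are complete and rest on the same lemma; yours is the more systematic packaging.
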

\begin{proof}
By Lemma~\ref{lem:internal hom}(b) the completed tensor product gives $\adicmod$ a symmetric monoidal structure. The coherence diagrams can be verified to commute using the coherence diagrams for $\mod{R}$. By Lemma~\ref{lem:internal hom}(a), the internal hom of $I$-adically complete modules is $I$-adically complete. Therefore the completed tensor product is left adjoint to the internal hom $\Hom_R(-,-)$ in $\adicmod$ by Lemma~\ref{lem:adic adjunction}.
\end{proof}

\subsection{Homological properties}
Finally we record some homological properties of the $I$-adic completion functor, which we will use later.
\begin{prop}\label{prop:homological-prop-adic}
Consider an ideal $I$ in a commutative ring $R$.
\begin{itemize}
 \item[(a)] If $M \to N$ is a surjective map of $R$-modules, then 
 $M^{\wedge}_I \to N^\wedge_I$ is also surjective.
 \item[(b)] The $I$-adic completion functor is neither left nor right exact.
 \item[(c)] If $R$ is Noetherian, then the $I$-adic completion functor is exact on 
 finitely generated $R$-modules.
 \item[(d)] If $0 \to M \to N \to F \to 0$ is a short exact sequence
 and $F$ is flat, then $0 \to M_I^\wedge \to N_I^\wedge \to F_I^\wedge \to 0$ is a 
 short exact sequence.
\end{itemize}
\end{prop}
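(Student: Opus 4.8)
The plan is to reduce every part to the behaviour of the tower $\{M/I^nM\}_{n\geq 0}$ under the inverse limit functor, using that such a tower has surjective transition maps and is therefore Mittag--Leffler, so that $\varprojlim^1\{M/I^nM\}=0$. The mechanism is the following. Given a short exact sequence $0 \to M' \to M \to M'' \to 0$, one identifies $M''/I^nM'' = M/(M'+I^nM)$ and computes the kernel of $M/I^nM \to M''/I^nM''$ to be $M'/(M'\cap I^nM)$, producing a short exact sequence of towers
\[ 0 \to \{M'/(M'\cap I^nM)\}_n \to \{M/I^nM\}_n \to \{M''/I^nM''\}_n \to 0. \]
The associated six-term $\varprojlim$--$\varprojlim^1$ exact sequence, together with the vanishing of $\varprojlim^1$ of the left and middle towers, then governs the exactness of the completed sequence.

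For part (a) I would apply this with $M' = \ker(M\to N)$ and $M''=N$: the left tower $\{M'/(M'\cap I^nM)\}$ again has surjective transition maps, so its $\varprojlim^1$ vanishes and the six-term sequence yields surjectivity of $M^\wedge_I \to N^\wedge_I$. Part (d) is the cleanest: flatness of $F$ gives $\mathrm{Tor}_1^R(F,R/I^n)=0$, so tensoring the given sequence with $R/I^n$ produces a short exact sequence $0 \to M/I^nM \to N/I^nN \to F/I^nF \to 0$ for every $n$; since $\{M/I^nM\}$ is Mittag--Leffler, applying $\varprojlim$ collapses the six-term sequence to the short exact sequence $0 \to M^\wedge_I \to N^\wedge_I \to F^\wedge_I \to 0$. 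Neither of these arguments needs $I$ to be finitely generated.

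For part (c) the key input is the Artin--Rees lemma. In general the natural comparison map $M^\wedge_I \to \varprojlim_n M'/(M'\cap I^nM)$, measuring the difference between the $I$-adic topology on a submodule $M'$ and its subspace topology, need not be an isomorphism, and this is precisely what obstructs exactness. When $R$ is Noetherian and the modules are finitely generated, Artin--Rees shows that the towers $\{M'/I^nM'\}$ and $\{M'/(M'\cap I^nM)\}$ are pro-isomorphic, so this map is an isomorphism; combined with the $\varprojlim^1$-vanishing above, the completed sequence is short exact and completion is exact. (Equivalently, $R^\wedge_I$ is flat over the Noetherian ring $R$ and completion agrees with $-\otimes_R R^\wedge_I$ on finitely generated modules.) Finally, part (b) is where the real care is needed, since it requires explicit counterexamples to both left and right exactness. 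The natural source is $R=\mathbb{Z}$ with $I=(p)$ and a non-finitely-generated module, where the discrepancy between the adic and subspace topologies from part (c) is genuine: left exactness fails when the comparison map $\varprojlim_n M'/I^nM' \to \varprojlim_n M'/(M'\cap I^nM)$ has nonzero kernel, and right exactness fails when it is not surjective, the latter detected by a nonvanishing $\varprojlim^1$. The main obstacle is to verify that a chosen short exact sequence genuinely realizes these pathologies---rather than accidentally completing to an exact sequence, as many natural examples do---which I would confirm by computing the relevant towers by hand.
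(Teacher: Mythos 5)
Your treatment of parts (a), (c) and (d) is correct: the short exact sequence of towers $0 \to \{M'/(M'\cap I^nM)\} \to \{M/I^nM\} \to \{M''/I^nM''\} \to 0$, the vanishing of ${\varprojlim}^1$ on towers with surjective transition maps, and Artin--Rees for (c) are precisely the standard arguments. The paper itself proves nothing here --- it cites the Stacks Project (Tag 0315) for (a) and (d) and Atiyah--Macdonald for (c) --- and the proofs behind those citations are the ones you sketch, so on these parts your proposal is complete and more self-contained than the paper.

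Part (b) is a genuine gap. That statement is an existence claim --- its entire content is that explicit counterexamples to left and to right exactness exist --- and your proposal never produces one: you identify where to look ($R=\mathbb{Z}$, $I=(p)$, a non-finitely-generated module) and then explicitly defer the verification to a computation you have not done. Since the paper handles (b) exactly by pointing to a worked example (Stacks Project, Tag 05JF), a blind proof must supply one. A standard choice fits your framework: take $0 \to K \to F \to Q \to 0$ with $F=\bigoplus_{n\geq 1}\mathbb{Z}$, $K=\bigoplus_{n\geq 1}p^n\mathbb{Z}$, $Q=\bigoplus_{n\geq 1}\mathbb{Z}/p^n$. One computes that the $p$-adic completion $F^\wedge$ is the module of sequences $(x_n)$ in $\mathbb{Z}_p$ with $v_p(x_n)\to\infty$, while the image of $K^\wedge \to F^\wedge$ consists of those $(y_n)$ with $v_p(y_n)-n\to\infty$; hence $(p^n)_n$ lies in $\ker(F^\wedge\to Q^\wedge)$ but not in the image of $K^\wedge$, so the completed sequence fails exactness at $F^\wedge$. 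Note that this single failure refutes left and right exactness simultaneously: a left exact functor must send the exact sequence $0\to K\to F\to Q$ to a sequence exact at $F^\wedge$, and a right exact functor must do the same for $K\to F\to Q\to 0$. This also corrects a misdirection in your diagnostics: you claim left-exactness failure requires the comparison map $\varprojlim M'/I^nM' \to \varprojlim M'/(M'\cap I^nM)$ to have nonzero kernel, i.e.\ an injection whose completion is non-injective. It does not --- non-surjectivity of the comparison map already destroys left exactness (via the middle term), and indeed in the example above $K^\wedge\to F^\wedge$ is still injective. Searching for a kernel-type pathology, as your plan suggests, would have sent you after a strictly harder example than the statement requires.
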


\begin{proof}
 Parts (a) and (d) can be found in~\cite[Tag 0315]{stacks-project}. Part (c) 
 can be found in~\cite[10.12]{AM16}. Finally for a counterexample for (b) 
 see~\cite[Tag 05JF]{stacks-project}.
\end{proof}

\section{\texorpdfstring{$L_0^I$}{L}-completion}\label{sec:L completion}
The $I$-adic completion functor is neither left nor right exact in general. In order to do homological algebra, it is convenient to replace the $I$-adic completion by its zeroth left derived functor.

\begin{defin}
The $L_0^I$-completion functor $L_0^I$ is the zeroth left derived functor of $I$-adic completion. For an $R$-module $M$, we have the explicit formula $L_0^IM = \mathrm{coker}((P_1)^{\wedge}_I \to( P_0)^{\wedge}_I)$ where $P_*$ is a projective resolution of $M$.  More generally, we will write $L_n^I $ for the $n$-th left derived functor of $I$-adic completion.
\end{defin}

\begin{rem}\leavevmode
\begin{itemize}
\item[(a)] It is  standard that $L_n^I$ is independent of the chosen projective 
resolution.
\item[(b)] By construction $L_0^I$ is right exact.
\item[(c)] Since the $I$-adic completion functor is in general neither left nor right exact, the $L_0^I$-completion functor is not isomorphic to the $I$-adic completion functor.
\end{itemize}
\end{rem}

\begin{defin}
An $R$-module $M$ is said to be \emph{$L_0^I$-complete} if the natural map $\eta\colon M \to L_0^IM$ is an isomorphism. We write $\mod{R}^{L_0^I}$ for the full subcategory of $\mod{R}$ consisting of the $L_0^I$-complete $R$-modules. 
\end{defin}

\subsection{Key properties}
We will often need to impose a condition on the ideal $I$ called \emph{weak pro-regularity}. We note that weakly pro-regular ideals are finitely generated by definition, but since we shall not require the precise definition, we refer the reader to~\cite{Schenzel03,MGM} for details. This assumption ensures that there is a good interplay between the different notions of completion, and is often satisfied in practice. For instance, any ideal in a Noetherian ring is weakly pro-regular, see~\cite[4.34]{MGM} and~\cite[\S 1]{GreenleesMay92}. 

\begin{lem}[{\cite[4.1]{GreenleesMay92}}]\label{lem:higher L vanishes}
	Let $R$ be a commutative ring and $I$ be a weakly pro-regular ideal. If $M$ is $N_I^\wedge$ or $L_0^IN$ for some $R$-module $N$, then $M$ is $L_0^I$-complete and $L_i^IM = 0$ for all $i \geq 1$.
\end{lem}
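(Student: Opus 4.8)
The plan is to move everything into the derived category and exploit the fact that weak pro-regularity makes $L_\ast^I$ computable by the derived completion $\Lambda_I$. The engine of the argument is the reformulation recorded in the introduction: since $I$ is weakly pro-regular, the total left derived functor of $I$-adic completion agrees with $\Lambda_I = \RHom_R(K_\infty(I),-)$ (see~\cite{MGM}), so for every $R$-module $M$ there are natural isomorphisms $L_s^I M \cong H_s(\Lambda_I M)$, with $\Lambda_I M$ concentrated in non-negative homological degrees and $H_0(\Lambda_I M)=L_0^I M$; moreover the unit $\eta\colon M \to L_0^I M$ is recovered as $H_0$ of the natural map $M \to \Lambda_I M$. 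Consequently, a module $M$ is $L_0^I$-complete with $L_s^I M = 0$ for all $s\geq 1$ if and only if $M \to \Lambda_I M$ is an isomorphism in $\D(R)$, that is, if and only if $M$ is derived $I$-complete. This reduces the entire lemma to the single assertion that both $N_I^\wedge$ and $L_0^I N$ are derived $I$-complete as modules.

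For the case $M = L_0^I N$, I would argue as follows. By the reformulation, $L_0^I N = H_0(\Lambda_I N)$ is the bottom homology module of the complex $\Lambda_I N$. But $\Lambda_I N$ lies in the essential image of the derived completion functor and is therefore derived $I$-complete (idempotency of $\Lambda_I$, equivalently of the stable Koszul complex $K_\infty(I)$). Since derived $I$-completeness of a complex is detected on its homology modules (\cite{stacks-project}), each $H_s(\Lambda_I N)$, and in particular $L_0^I N$, is a derived $I$-complete module; the first paragraph then finishes this case.

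For the case $M = N_I^\wedge$, I would instead give a direct argument that $N_I^\wedge$ is derived $I$-complete. Writing $N_I^\wedge = \varprojlim_n N/I^n N$, each quotient $N/I^n N$ is annihilated by $I^n$, and any $I$-power-torsion module $T$ is derived $I$-complete: testing on generators $t_1,\dots,t_k$ of $I$, one has $\RHom_R(R[1/t_i], T) \simeq R\varprojlim(T \xleftarrow{t_i} T \xleftarrow{t_i} \cdots) = 0$ because $t_i$ acts nilpotently on $T$, which gives derived $I$-completeness by the generator-wise criterion. The tower $\{N/I^n N\}$ has surjective transition maps, hence is Mittag--Leffler, so its ordinary limit $N_I^\wedge$ agrees with its homotopy limit. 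As $\Lambda_I = \RHom_R(K_\infty(I),-)$ commutes with homotopy limits and derived $I$-complete objects are closed under them, $N_I^\wedge$ is derived $I$-complete, and once more the first paragraph concludes.

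The substantive input, and the only place weak pro-regularity is genuinely used, is the identification $L_s^I M \cong H_s(\Lambda_I M)$, i.e. $\mathbb{L}(-)_I^\wedge \simeq \Lambda_I$; this is precisely what weak pro-regularity buys, and I expect it to be the crux. The remaining ingredients (idempotency of $\Lambda_I$, detection of derived completeness on homology, and the generator-wise torsion criterion) are formal features of the derived torsion/completion formalism and are standard. Once they are in place no computation with explicit projective resolutions is required, and both cases of the lemma follow uniformly from the principle that the derived $I$-complete modules are exactly the $L_0^I$-complete modules with vanishing higher $L_s^I$.
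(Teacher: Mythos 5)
The paper does not actually prove this lemma: it is imported from~\cite[4.1]{GreenleesMay92}, and the remark immediately following it explains that weak pro-regularity suffices because the left derived functors of completion are then computed by $\Lambda_I$ (Theorem~\ref{thm:local homology and L-completion} and Remark~\ref{rem-meaning-L_n-complexes}). Your proof is correct and genuinely different in route: your first paragraph turns that remark into a reduction, namely that under weak pro-regularity $L_s^IM\cong H_s(\Lambda_IM)$ for a module $M$, so the lemma is equivalent to $N_I^\wedge$ and $L_0^IN$ being derived complete as modules; you then prove these two statements purely within the derived torsion/completion formalism (idempotency of $K_\infty(I)$, detection of derived completeness on homology, the generator-wise criterion via $\RHom_R(R[1/t_i],-)$, closure under homotopy limits). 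These ingredients hold for any finitely generated ideal, need no pro-regularity, and are logically independent of the lemma, so the argument is non-circular and self-contained where the paper's is a citation; weak pro-regularity enters exactly once, through~\cite[5.25]{MGM}, which matches the paper's own assessment of where the real content lies. One caution: for ``derived completeness is detected on homology'' you must indeed cite~\cite{stacks-project} (or prove it from the $R[1/t_i]$-criterion), and \emph{not} the paper's Proposition~\ref{prop:implication-completions}(a); the spectral sequence $L_p^I(H_qM)\Rightarrow H_{p+q}(\Lambda_IM)$ behind that proposition yields the detection statement only once one knows that $L_0^I$-complete modules have vanishing higher $L_i^I$ --- which is precisely the lemma you are proving.

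There is one wording slip worth repairing. The claim that ``any $I$-power-torsion module is derived $I$-complete'' is false as stated: for $R=\Z$ and $I=(p)$, the Pr\"{u}fer module $\Z/p^\infty$ is $p$-power torsion, yet $\RHom_{\Z}(\Z[1/p],\Z/p^\infty)$ is the homotopy limit of the tower $(\Z/p^\infty\xleftarrow{p}\Z/p^\infty\xleftarrow{p}\cdots)$, which has surjective transition maps and inverse limit the Tate module $\varprojlim(\Z/p^\infty,p)\cong\Z_p\neq 0$, so $\Z/p^\infty$ is not derived complete. What your nilpotence argument actually proves, and all that you use, is that any module annihilated by a \emph{fixed} power of $I$ is derived complete; since $N/I^nN$ is killed by $I^n$, the proof of the case $M=N_I^\wedge$ stands as written once the claim is restated in this bounded form.
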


\begin{lem}[{\cite[A.2(a)]{HoveyStrickland99}}]\label{lem-factorization}
Let $R$ be a commutative ring, $I$ be a weakly pro-regular ideal and $M$ be an $R$-module. The canonical map $\gamma \colon M \to M^{\wedge}_I$ factors as $M \xrightarrow{\eta}L_0^I M \xrightarrow{\varepsilon} M^{\wedge}_I$. Furthermore, 
$\varepsilon$ is always surjective. 
\end{lem}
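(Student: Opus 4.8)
The plan is to extract both maps directly from a projective resolution, deduce the factorization from naturality of the completion map $\gamma$, and obtain surjectivity of $\varepsilon$ for free from Proposition~\ref{prop:homological-prop-adic}(a). So I would fix a projective resolution $\cdots \to P_1 \xrightarrow{d} P_0 \xrightarrow{\pi} M \to 0$; by definition $L_0^I M = \mathrm{coker}\big((P_1)_I^\wedge \to (P_0)_I^\wedge\big)$, and I write $q\colon (P_0)_I^\wedge \to L_0^I M$ for the defining quotient. Everything then reduces to two universal-property descents and one diagram chase.

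To build $\varepsilon$, I would apply $(-)_I^\wedge$ to the augmentation to get $\pi_I^\wedge\colon (P_0)_I^\wedge \to M_I^\wedge$. Since $\pi\circ d = 0$ and completion is a functor, the composite $(P_1)_I^\wedge \to (P_0)_I^\wedge \xrightarrow{\pi_I^\wedge} M_I^\wedge$ vanishes, so $\pi_I^\wedge$ factors uniquely through the cokernel $q$, yielding $\varepsilon\colon L_0^I M \to M_I^\wedge$ with $\varepsilon\circ q = \pi_I^\wedge$. To build $\eta$, I would instead consider $q\circ\gamma_{P_0}\colon P_0 \to L_0^I M$. Naturality of $\gamma$ applied to $d$ gives $\gamma_{P_0}\circ d = d_I^\wedge\circ\gamma_{P_1}$, and post-composing with $q$ gives zero because $q\circ d_I^\wedge = 0$ by construction of the cokernel. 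Hence $q\circ\gamma_{P_0}$ annihilates $\mathrm{im}(d)=\ker(\pi)$ and descends along the epimorphism $\pi$ to a map $\eta\colon M \to L_0^I M$ determined by $\eta\circ\pi = q\circ\gamma_{P_0}$.

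With both maps in hand, the factorization is pure naturality: precomposing with the epimorphism $\pi$ I would compute $\varepsilon\circ\eta\circ\pi = \varepsilon\circ q\circ\gamma_{P_0} = \pi_I^\wedge\circ\gamma_{P_0} = \gamma_M\circ\pi$, the last equality being naturality of $\gamma$ along $\pi$; since $\pi$ is epi this forces $\varepsilon\circ\eta = \gamma_M$. For surjectivity, $\pi$ is surjective, so $\pi_I^\wedge$ is surjective by Proposition~\ref{prop:homological-prop-adic}(a); as $\pi_I^\wedge = \varepsilon\circ q$, the map $\varepsilon$ is surjective as well.

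I do not expect a genuine obstacle here: the content is entirely the two factorizations through cokernels together with naturality of $\gamma$, and no step requires any delicate estimate. The only points needing a little care are checking that $\eta$ and $\varepsilon$ are independent of the chosen resolution (standard for (co)augmentations of left derived functors, since $L_0$ is computed up to canonical isomorphism) and keeping the direction of the two descents straight. It is worth noting that weak pro-regularity is not actually used in this argument — only Proposition~\ref{prop:homological-prop-adic}(a), valid for an arbitrary ideal, enters — so the hypothesis is inherited from the ambient setting rather than needed for the conclusion.
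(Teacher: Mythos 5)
Your proof is correct. Note that the paper does not actually prove this lemma: it is quoted directly from Hovey--Strickland~\cite[A.2(a)]{HoveyStrickland99}, with the subsequent Remark explaining why the stricter hypotheses of that reference may be relaxed to weak pro-regularity via the identification of $L_n^I$ with the homology of derived completion~\cite{MGM}. Your argument is the standard construction underlying that citation, and every step checks out: $\varepsilon$ is the comparison map that exists for the zeroth left derived functor of any additive functor (factor $\pi_I^\wedge$ through the cokernel defining $L_0^I M$), $\eta$ is the map induced on $H_0$ by the chain map $\gamma_{P_\bullet}\colon P_\bullet \to (P_\bullet)_I^\wedge$ (and since the paper never spells out the natural map $\eta$ from its definition of $L_0^I$-complete modules, your construction is exactly the canonical one), the factorization follows from naturality of $\gamma$ together with $\pi$ being an epimorphism, and surjectivity of $\varepsilon$ follows since $\varepsilon \circ q = \pi_I^\wedge$ is surjective by Proposition~\ref{prop:homological-prop-adic}(a). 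The descents are justified because $\mathrm{im}(d) = \ker(\pi)$ by exactness of the resolution, and the resolution-independence point you flag is indeed the standard comparison argument for left derived functors. Your closing observation is also correct and is in fact sharper than the paper's own Remark: whereas the paper justifies weak pro-regularity by appealing to the Porta--Shaul--Yekutieli identification of $L_n^I$ with $H_n \Lambda_I$, your proof shows that for this particular lemma no identification and no hypothesis on $I$ at all is required, since Proposition~\ref{prop:homological-prop-adic}(a) is stated for arbitrary ideals; the weak pro-regularity hypothesis is inherited from the ambient setting (and is genuinely relevant for the companion Lemma~\ref{lem:higher L vanishes}, but not here).
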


\begin{rem}
The references given for the previous two lemmas work under stricter hypotheses than weak pro-regularity which are only imposed to ensure that the left derived functors of $I$-adic completion can be calculated by the homology of the derived completion functor. This holds under the weaker assumption of weak pro-regularity by work of Porta-Shaul-Yekutieli~\cite{MGM}, see Theorem~\ref{thm:local homology and L-completion} and Remark~\ref{rem-meaning-L_n-complexes} for a precise statement.
\end{rem}

Using these we can measure the difference between $I$-adic completion and $L_0^I$-completion.
\begin{prop}\label{prop:adic is separated and L}
	Let $R$ be a commutative ring and $I$ be a weakly pro-regular ideal. An $R$-module $M$ is $I$-adically complete if and only if it is $L_0^I$-complete and $I$-adically separated.
\end{prop}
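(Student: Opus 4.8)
The plan is to work entirely with the factorization $M \xrightarrow{\eta} L_0^I M \xrightarrow{\varepsilon} M_I^\wedge$ of the completion map $\gamma_M$ supplied by Lemma~\ref{lem-factorization}, together with the fact recorded there that $\varepsilon$ is always surjective. Recall that $M$ being \emph{$I$-adically complete} means $\gamma_M$ is an isomorphism, being \emph{$I$-adically separated} means $\gamma_M$ is a monomorphism, and being \emph{$L_0^I$-complete} means $\eta$ is an isomorphism. Since we work in the abelian category of $R$-modules, a map is an isomorphism precisely when it is both a monomorphism and an epimorphism, and this is essentially the only categorical input needed.

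For the forward direction, suppose $M$ is $I$-adically complete. Separatedness is immediate, as an isomorphism is in particular a monomorphism. For $L_0^I$-completeness I would invoke Lemma~\ref{lem:higher L vanishes}: since $\gamma_M$ is an isomorphism we have $M \cong M_I^\wedge$, and that lemma asserts that any module of the form $N_I^\wedge$ is already $L_0^I$-complete. As $L_0^I$-completeness is preserved under isomorphism of modules, it follows that $M$ is $L_0^I$-complete.

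For the converse, suppose $M$ is both $L_0^I$-complete and $I$-adically separated, so that $\eta$ is an isomorphism and $\gamma_M$ is a monomorphism. Writing $\varepsilon = \gamma_M \circ \eta^{-1}$, we see that $\varepsilon$ is a composite of a monomorphism with an isomorphism, hence a monomorphism. Combined with the surjectivity of $\varepsilon$ from Lemma~\ref{lem-factorization}, this forces $\varepsilon$ to be an isomorphism, and therefore $\gamma_M = \varepsilon \circ \eta$ is an isomorphism; that is, $M$ is $I$-adically complete.

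There is no serious obstacle here beyond assembling the right inputs: the whole argument is a short diagram chase. The one point worth flagging is that both ingredients---the existence of the factorization with $\varepsilon$ surjective, and the $L_0^I$-completeness of $N_I^\wedge$---rely on weak pro-regularity of $I$. Thus this hypothesis is genuinely used (through Lemmas~\ref{lem-factorization} and~\ref{lem:higher L vanishes}), even though it never appears explicitly in the manipulation itself.
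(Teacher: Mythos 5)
Your proof is correct and takes essentially the same route as the paper: the forward implication via Lemma~\ref{lem:higher L vanishes} (completeness of modules of the form $N_I^\wedge$), and the converse via the factorization $\gamma_M = \varepsilon\circ\eta$ from Lemma~\ref{lem-factorization}, using that $\varepsilon$ is surjective and, under your hypotheses, also injective. You have merely spelled out the mono/epi bookkeeping that the paper's two-line proof leaves implicit.
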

\begin{proof}
	The backward implication follows from the fact that the natural map 
	$\varepsilon \colon L_0^IM \to M_I^\wedge$ is an epimorphism by Lemma~\ref{lem-factorization}. The forward implication follows from Lemma~\ref{lem:higher L vanishes}.
\end{proof}

\begin{cor}\label{cor:factor}
Let $R$ be a commutative ring and $I$ be a weakly pro-regular ideal. The triangle of adjunctions 
\[\begin{tikzcd}
\adicmod \arrow[rr, hookrightarrow, yshift=-1mm] \arrow[dr, yshift=-1mm, hookrightarrow] &  & \mod{R}^{L_0^I} \arrow[ll, "(-)_I^\wedge"', yshift=1mm]
 \\
 & \mod{R} \arrow[ul, "(-)_I^\wedge"', yshift=1mm] \arrow[ru, yshift=1mm, "L_0^I"] \arrow[ur, yshift=-1mm, hookleftarrow] &
\end{tikzcd}\]
commutes. That is, the underlying diagram of left (or, equivalently right) adjoints commutes up to natural isomorphism.
\end{cor}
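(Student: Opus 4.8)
The plan is to exploit the fact that in each of the three displayed adjunctions the right adjoint is a fully faithful inclusion of a full subcategory, so that commutativity becomes a formal consequence of the uniqueness of adjoints. First I would confirm that the three adjunctions are genuinely present. The adjunction $(-)_I^\wedge \dashv i$ between $\mod{R}$ and $\adicmod$ is Lemma~\ref{lem:adic adjunction}. The reflection $L_0^I \dashv i$ onto $\mod{R}^{L_0^I}$ follows from the idempotency of $L_0^I$: by Lemma~\ref{lem:higher L vanishes} the module $L_0^I M$ is always $L_0^I$-complete, so the unit $\eta\colon M \to L_0^I M$ exhibits $\mod{R}^{L_0^I}$ as a reflective subcategory of $\mod{R}$. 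Finally, the adjunction $(-)_I^\wedge \dashv i$ between $\mod{R}^{L_0^I}$ and $\adicmod$ is obtained by restricting Lemma~\ref{lem:adic adjunction}: for $L_0^I$-complete $M$ the completion $M_I^\wedge$ is $I$-adically complete by Proposition~\ref{prop:algebraic and analytic}, and every $I$-adically complete module is $L_0^I$-complete by Proposition~\ref{prop:adic is separated and L}, so the inclusion $\adicmod \hookrightarrow \mod{R}^{L_0^I}$ makes sense and the required hom-set bijection is inherited from the ambient adjunction.

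With the adjunctions in place, the key step is the following observation. The two right adjoints $\adicmod \hookrightarrow \mod{R}^{L_0^I}$ and $\mod{R}^{L_0^I} \hookrightarrow \mod{R}$ are both genuine inclusions of full subcategories, so their composite is \emph{strictly} equal to the inclusion $\adicmod \hookrightarrow \mod{R}$, which is itself the right adjoint of the direct completion functor $(-)_I^\wedge\colon \mod{R} \to \adicmod$. The triangle of right adjoints therefore commutes on the nose. Passing to left adjoints and invoking the uniqueness of adjoints up to canonical natural isomorphism, the composite left adjoint $(-)_I^\wedge \circ L_0^I\colon \mod{R} \to \adicmod$ is naturally isomorphic to the direct completion functor $(-)_I^\wedge$; concretely, the map $M_I^\wedge \to (L_0^I M)_I^\wedge$ induced by $\eta$ is an isomorphism. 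This is exactly the asserted commutativity of the triangle of left adjoints, and since the triangle of right adjoints commutes strictly the two formulations are equivalent.

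The argument is essentially soft, so there is no computational obstacle; the only points requiring care are the bookkeeping in the first paragraph, namely verifying that the inclusions of subcategories are correctly nested ($\adicmod \subseteq \mod{R}^{L_0^I} \subseteq \mod{R}$) and that the completion and $L_0^I$ functors genuinely corestrict to the stated subcategories. All of these rest on the weak pro-regularity of $I$ through Lemma~\ref{lem:higher L vanishes} and Proposition~\ref{prop:adic is separated and L}; once they are checked, the commutativity is immediate from the uniqueness of adjoints and requires no further input.
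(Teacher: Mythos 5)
Your proof is correct and follows essentially the same route as the paper: establish the nesting $\adicmod \subseteq \mod{R}^{L_0^I} \subseteq \mod{R}$ via Proposition~\ref{prop:adic is separated and L}, obtain the restricted completion adjunction from Lemma~\ref{lem:adic adjunction}, and conclude from the strict commutativity of the triangle of inclusions together with uniqueness of left adjoints. The only difference is cosmetic: you re-derive the reflection $L_0^I \dashv i$ from idempotency (Lemma~\ref{lem:higher L vanishes}) where the paper simply invokes Proposition~\ref{prop:L properties}(c).
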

\begin{proof}
From Proposition~\ref{prop:adic is separated and L}, one sees that the category of $I$-adically complete modules embeds into the category of $L_0^I$-complete modules. Using Lemma~\ref{lem:adic adjunction}, one verifies that this inclusion has a left adjoint given by the $I$-adic completion. The triangle of inclusions then clearly commutes which gives the result.
\end{proof}
	
\subsection{Additional structure}
We now record some key facts about the category of $L_0^I$-complete modules. For more details, we direct the reader to~\cite[\S 5]{PolWilliamson} and the appendix of~\cite{HoveyStrickland99}.
\begin{prop}\label{prop:L properties} Let $R$ be a commutative ring and $I$ be a weakly pro-regular ideal.
\begin{itemize}
\item[(a)] The category of $L_0^I$-complete modules is an abelian category with kernels and cokernels calculated in the underlying category of $R$-modules. 
\item[(b)] The category of $L_0^I$-complete modules is a closed symmetric monoidal category with tensor product $L_0^I(M \otimes_R N)$ and internal hom $\mathrm{Hom}_R(M,N).$
\item[(c)] There is an adjunction 
$$\begin{tikzcd}
\mod{R} \arrow[r, "L_0^I", yshift=1mm] \arrow[r, hookleftarrow, yshift=-1mm] & \mod{R}^{L_0^I} 
\end{tikzcd} $$
and the left adjoint is strong symmetric monoidal.
\item[(d)] The category of $L_0^I$-complete modules has all limits and colimits. The limits are calculated in the underlying category of $R$-modules and the colimits are given by the $L_0^I$-completion of the colimit in the underlying category of $R$-modules.
\end{itemize}
\end{prop}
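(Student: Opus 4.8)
The plan is to treat the four parts in the order (a), (c), (d), (b), since the abelian structure and the reflection feed into the (co)completeness and monoidal claims. The essential inputs throughout are that $L_0^I$ is right exact, that the higher functors $L_i^I$ for $i \geq 1$ vanish on any $L_0^I$-complete module (Lemma~\ref{lem:higher L vanishes}, applied to $M = L_0^I M$), and the long exact sequences of the left derived functors $L_*^I$.

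For (a) I would show the subcategory is closed under kernels and cokernels formed in $\mod{R}$; the abelian structure then follows from the general principle that a full additive subcategory of an abelian category which is closed under ambient kernels and cokernels is itself abelian with the inherited exactness. Closure under cokernels is immediate from right exactness: for $f\colon M \to N$ between complete modules with cokernel $C$ in $\mod{R}$, applying $L_0^I$ to $M \to N \to C \to 0$ and using $L_0^I M = M$ and $L_0^I N = N$ identifies the unit $\eta_C\colon C \to L_0^I C$ with an isomorphism. Closure under kernels is obtained in two steps, first for $\mathrm{im}(f)$ and then for $\ker(f)$: in each case one writes the relevant short exact sequence, applies the long exact sequence of $L_*^I$, and uses that the flanking complete modules have vanishing $L_1^I$ to extract a short exact sequence of $L_0^I$-completions; the five lemma against the units $\eta$ then forces the remaining unit to be an isomorphism. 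The same five-lemma argument gives closure under extensions and shows that short exact sequences in the subcategory are precisely those in $\mod{R}$.

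For (c) I would exhibit $L_0^I$ as left adjoint to the inclusion by showing that $(L_0^I,\eta)$ is an idempotent monad; concretely, that $L_0^I\eta_M = \eta_{L_0^I M}$ is an isomorphism for every $M$, which I would verify from a projective resolution together with Lemma~\ref{lem:higher L vanishes}. Given this, for complete $N$ the extension of a map $g\colon M \to N$ along $\eta_M$ exists (take $L_0^I g$ under the identification $N = L_0^I N$ and use naturality of $\eta$) and is unique (by idempotency), yielding $\mathrm{Hom}_R(L_0^I M, N) \cong \mathrm{Hom}_R(M, N)$. Part (d) is then formal: as a reflective subcategory of the complete and cocomplete $\mod{R}$, the category of $L_0^I$-complete modules is complete and cocomplete, with limits created by the inclusion (hence computed in $\mod{R}$, the point being that a limit of complete modules is complete) and colimits computed by applying $L_0^I$ to the colimit in $\mod{R}$. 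I expect part (c) to be the main obstacle: establishing that $L_0^I\eta_M$ is an isomorphism, equivalently that $\eta_M$ is initial among maps to complete modules, is where weak pro-regularity is genuinely used, and the reduction to free modules, where one compares $\mathrm{Hom}_R(F_I^\wedge, N)$ with $\mathrm{Hom}_R(F, N)$ for an $L_0^I$-complete but not necessarily $I$-adically complete $N$, is the delicate computation; everything else is formal category theory or a long-exact-sequence chase once (c) is in hand.

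For (b) I would first check the internal hom is well defined: for complete $N$ and any $M$, a free presentation yields $0 \to \mathrm{Hom}_R(M,N) \to \prod_J N \to \prod_K N$, so $\mathrm{Hom}_R(M,N)$ is a kernel of a map of complete modules, using closure under products from (d), and hence complete by (a). The tensor product $L_0^I(M \otimes_R N)$ is complete by construction. The closed structure $\mathrm{Hom}_R(L_0^I(M \otimes_R N), P) \cong \mathrm{Hom}_R(M, \mathrm{Hom}_R(N,P))$ for complete $P$ follows by combining the adjunction of (c) with the ordinary tensor--hom adjunction in $\mod{R}$. Finally, the symmetric monoidal coherence and the strong monoidality of $L_0^I$ are transported from $\mod{R}$ once one proves the projection formula $L_0^I(L_0^I X \otimes_R Y) \cong L_0^I(X \otimes_R Y)$, a Yoneda argument against complete test objects using the closed adjunction, exactly parallel to Lemma~\ref{lem:internal hom}(b) and Proposition~\ref{prop:adic-sym-mon}.
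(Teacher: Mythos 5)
Your proposal is correct, but note that the paper itself gives no proof of this proposition: it records the statements and defers to~\cite[\S 5]{PolWilliamson} and the appendix of~\cite{HoveyStrickland99}, and your outline is essentially a faithful reconstruction of the arguments in those sources --- closure under cokernels from right exactness and under kernels via the two-step long-exact-sequence chase through $\mathrm{im}(f)$ using the vanishing of $L_1^I$ on $L_0^I$-complete modules (Lemma~\ref{lem:higher L vanishes} applied to $M=L_0^IM$), the reflection in (c) from idempotency, (co)completeness in (d) formally from reflectivity, and the closed monoidal structure in (b) by the same Yoneda argument as Lemma~\ref{lem:internal hom}(b) and Proposition~\ref{prop:adic-sym-mon}. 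One small refinement to (c): you do not need the literal equality $L_0^I\eta_M=\eta_{L_0^IM}$ of natural maps (which itself requires justification); for existence and uniqueness of factorizations through $\eta_M$ it suffices that $\eta_{L_0^IM}$ is an isomorphism, which is exactly Lemma~\ref{lem:higher L vanishes}, and that $L_0^I\eta_M$ is an isomorphism (or merely an epimorphism), which is the genuinely delicate point you correctly isolate and which follows from the resolution argument you sketch, as in Greenlees--May~\cite{GreenleesMay92}.
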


Recall that a full subcategory of $\mod{R}$ is said to be \emph{exact} if it 
is abelian and the inclusion functor is exact, and is said to be \emph{replete} if it is closed under isomorphisms. The category of $L_0^I$-complete 
modules satisfies the following universal property.

\begin{prop}\label{prop:exact}
Let $R$ be a commutative ring and $I$ be a weakly pro-regular ideal. The category of $L_0^I$-complete $R$-modules is the smallest replete 
  exact full subcategory of $\mod{R}$ which contains all $I$-adically complete 
  $R$-modules. 
\end{prop}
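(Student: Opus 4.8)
The plan is to verify separately that $\mod{R}^{L_0^I}$ is \emph{a} replete exact full subcategory containing all $I$-adically complete modules, and then to establish minimality by showing that every $L_0^I$-complete module already lies inside any rival subcategory. The two halves use quite different inputs: the first is a repackaging of the structural results already recorded, while the second rests on the explicit cokernel formula for $L_0^I$.

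First I would check the required properties of $\mod{R}^{L_0^I}$ itself. Repleteness is immediate, since $L_0^I$-completeness is defined by the isomorphism condition $M \xrightarrow{\cong} L_0^I M$ and so is preserved under isomorphism. That $\mod{R}^{L_0^I}$ is abelian with kernels and cokernels computed in $\mod{R}$ is Proposition~\ref{prop:L properties}(a); since the inclusion into $\mod{R}$ therefore preserves kernels and cokernels, it is an exact functor, making $\mod{R}^{L_0^I}$ an exact subcategory. Finally, every $I$-adically complete module is $L_0^I$-complete by Proposition~\ref{prop:adic is separated and L}, so $\adicmod \subseteq \mod{R}^{L_0^I}$.

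For minimality, I would take an arbitrary replete exact full subcategory $\mathcal{A}$ of $\mod{R}$ containing all $I$-adically complete modules, together with an arbitrary $L_0^I$-complete module $M$, and argue that $M \in \mathcal{A}$. The key observation is that $M$ can be presented as a cokernel of a map of $I$-adically complete modules: choosing a projective resolution $P_*$ of $M$, the defining formula gives $L_0^I M = \mathrm{coker}((P_1)_I^\wedge \to (P_0)_I^\wedge)$, the cokernel being taken in $\mod{R}$, and since $M$ is $L_0^I$-complete we obtain $M \cong \mathrm{coker}((P_1)_I^\wedge \to (P_0)_I^\wedge)$. By Proposition~\ref{prop:algebraic and analytic} the modules $(P_0)_I^\wedge$ and $(P_1)_I^\wedge$ are $I$-adically complete, hence objects of $\mathcal{A}$, and the map between them is a morphism of $\mathcal{A}$ since $\mathcal{A}$ is full. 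As $\mathcal{A}$ is abelian this morphism admits a cokernel in $\mathcal{A}$, and as the inclusion $\mathcal{A} \hookrightarrow \mod{R}$ is exact this cokernel is preserved, so it agrees with the cokernel computed in $\mod{R}$. Thus $M$ is isomorphic to an object of $\mathcal{A}$, and repleteness of $\mathcal{A}$ forces $M \in \mathcal{A}$; hence $\mod{R}^{L_0^I} \subseteq \mathcal{A}$, which is exactly minimality.

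The main obstacle, such as it is, lies in the exactness bookkeeping of the last step: one must use that an exact fully faithful inclusion of abelian categories preserves cokernels, so that the cokernel presentation of $M$ — which a priori only makes sense in $\mod{R}$ — is recognized as the image of a cokernel formed inside $\mathcal{A}$. The only other point requiring a little care is that the formula for $L_0^I$ involves completions of possibly non-finitely-generated projective modules, so it is essential that Proposition~\ref{prop:algebraic and analytic} guarantees $(P_0)_I^\wedge$ and $(P_1)_I^\wedge$ to be genuinely $I$-adically complete rather than merely $L_0^I$-complete; this is what puts them in $\mathcal{A}$ rather than only in $\mod{R}^{L_0^I}$.
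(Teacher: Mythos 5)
Your proof is correct, but it takes a genuinely different route from the paper. The paper's proof is a two-line citation: the Noetherian case is Salch's result \cite[4.13]{Salch10}, and the weakly pro-regular case follows from Salch's general machinery \cite[3.14]{Salch10} once one verifies its two hypotheses, namely that $(-)_I^\wedge$ preserves surjections (Proposition~\ref{prop:homological-prop-adic}(a)) and that $L_1^IL_0^IM=0$ for all $R$-modules $M$ (Lemma~\ref{lem:higher L vanishes}). Your argument instead unwinds what such machinery proves: you first check directly that $\mod{R}^{L_0^I}$ is a replete exact full subcategory containing $\adicmod$, which is precisely Proposition~\ref{prop:L properties}(a) together with Proposition~\ref{prop:adic is separated and L}; you then prove minimality by exhibiting any $L_0^I$-complete module $M$ as $M\cong L_0^IM=\mathrm{coker}\bigl((P_1)_I^\wedge\to(P_0)_I^\wedge\bigr)$, a cokernel in $\mod{R}$ of a map between honestly $I$-adically complete modules (Proposition~\ref{prop:algebraic and analytic}, which applies since weakly pro-regular ideals are finitely generated), and noting that any replete exact full abelian subcategory containing $\adicmod$ is closed under such cokernels, because an exact fully faithful inclusion preserves cokernels. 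Your bookkeeping on that last point is the right one, and your observation that one needs genuine $I$-adic completeness of $(P_i)_I^\wedge$ (not merely $L_0^I$-completeness) is exactly where the argument could otherwise slip. The trade-off between the two proofs: the paper's is shorter and shifts the burden onto Salch's hypotheses, which are verified from cited black boxes; yours shifts the burden onto the abelianness statement Proposition~\ref{prop:L properties}(a), which is likewise a cited black box, but in exchange makes the mechanism of minimality completely transparent and self-contained within the paper's stated results.
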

\begin{proof}
The claim is proved for $R$ Noetherian by Salch~\cite[4.13]{Salch10}. The more general statement follows from Salch's general machinery~\cite[3.14]{Salch10} since $(-)_I^\wedge$ preserves surjections by Proposition~\ref{prop:homological-prop-adic} and $L_1^IL_0^IM = 0$ for all $R$-modules $M$ by Lemma~\ref{lem:higher L vanishes}.
\end{proof}

\subsection{Extension to complexes}\label{sec:exttocomplexes}
We can readily extend the $I$-adic completion functor and its left derived functors $L_n^I$ to complexes by applying the functor levelwise. As such, we make the following definition.
\begin{defin}
Let $M$ be a complex of $R$-modules.
\begin{itemize}
\item We say that $M$ is \emph{$I$-adically complete} if the natural map $M \to M_I^\wedge$ is an isomorphism, that is, if $M$ is a complex of $I$-adically complete $R$-modules.  
\item We say that $M$ is \emph{$L_0^I$-complete} if the natural map $M \to L_0^IM$ is an isomorphism, that is, if $M$ is a complex of $L_0^I$-complete $R$-modules.  
\end{itemize}
\end{defin}

\begin{cor}\label{cor:Lcompletehomology}
Let $R$ be a commutative ring and $I$ be a weakly pro-regular ideal. If $M$ is an $L_0^I$-complete complex, then each homology group $H_nM$ is $L_0^I$-complete.
\end{cor}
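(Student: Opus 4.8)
The plan is to exploit the fact that the category of $L_0^I$-complete modules is abelian with kernels and cokernels computed at the level of underlying $R$-modules. Concretely, by Proposition~\ref{prop:L properties}(a), for any map $f\colon A \to B$ of $L_0^I$-complete modules, the kernel and cokernel of $f$ formed in $\mod{R}^{L_0^I}$ agree with those formed in $\mod{R}$. Since images in an abelian category are built from kernels and cokernels, the same is true for images. Phrased differently, this says exactly that the inclusion functor $i\colon \mod{R}^{L_0^I} \hookrightarrow \mod{R}$ is exact, which is also the content of Proposition~\ref{prop:exact}.

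First I would observe that an $L_0^I$-complete complex $M$, with differentials $d_n\colon M_n \to M_{n-1}$, is precisely a chain complex in the abelian category $\mod{R}^{L_0^I}$. One can therefore form its homology objects internally to $\mod{R}^{L_0^I}$, namely $\ker(d_n)/\operatorname{im}(d_{n+1})$ computed in $\mod{R}^{L_0^I}$; these are by construction objects of $\mod{R}^{L_0^I}$ and hence $L_0^I$-complete. Next I would argue that these internal homology objects coincide with the usual homology groups $H_nM$ computed in $\mod{R}$: because the inclusion $i$ is exact it commutes with the formation of kernels, images, and quotients, so applying $i$ to the internal homology yields the homology of $M$ viewed as a complex of $R$-modules. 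Combining these two points gives that $H_nM$ is $L_0^I$-complete.

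There is essentially no serious obstacle here; the only point requiring care is to make sure that homology taken inside $\mod{R}^{L_0^I}$ genuinely matches $H_nM$ as an $R$-module, and this is guaranteed precisely by the underlying-computation of kernels and cokernels recorded in Proposition~\ref{prop:L properties}(a). I would emphasize this compatibility explicitly, since it is the feature of $L_0^I$-completeness (as opposed to, say, $I$-adic completeness, where cokernels must themselves be completed) that makes the argument go through cleanly.
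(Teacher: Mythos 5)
Your proof is correct and follows essentially the same route as the paper: the paper's entire proof is the observation that, by Proposition~\ref{prop:exact}, the inclusion $\mod{R}^{L_0^I} \hookrightarrow \mod{R}$ is exact, which is exactly the mechanism you invoke (via Proposition~\ref{prop:L properties}(a)) when you identify homology computed in $\mod{R}^{L_0^I}$ with $H_nM$ computed in $\mod{R}$. You have merely spelled out the routine details that the paper leaves implicit, and your emphasis on kernels and cokernels being computed in the underlying category is precisely the right point of care.
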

\begin{proof}
By Proposition~\ref{prop:exact} the inclusion functor $\mod{R}^{L_0^I} \hookrightarrow \mod{R}$ is exact.
\end{proof}

\section{The total left derived functor of \texorpdfstring{$I$}{I}-adic completion}\label{sec:totalleft}
Throughout we let $\D(R)$ denote the (unbounded) derived category of the commutative ring $R$. 

\begin{rem}\label{rem-dual-perfect-comp}
 We recall that a complex $M \in \D(R)$ is perfect (i.e., quasi-isomorphic to a bounded complex of finitely generated projective modules) if and only if it is compact in 
 the sense that $\RHom_R(M, -)$ commutes with arbitrary coproducts. This also 
 implies that the canonical map
 \[
 \RHom_R(M, R) \Lotimes_R N \to \RHom_R(M,N)
 \]
 is a quasi-isomorphism, for all $N \in \D(R)$.
\end{rem}

\begin{defin}\label{def-dg-projective-flat}
\leavevmode 
\begin{itemize}
\item A complex $P$ is \emph{dg-projective} if $\Hom_R(P,-)$ preserves surjective quasi-isomorphisms. 
\item A \emph{dg-projective resolution} for a complex $M$ is a choice of quasi-isomorphism $P \to M$ where $P$ is a dg-projective complex.
\end{itemize}
\end{defin}

We record the following key fact from~\cite[9.6.1]{AvramovFoxbyHalperin03}.
\begin{lem}\label{lem:dgprojimpliesproj}
Let $M$ be a complex of $R$-modules. If $M$ is dg-projective then each term $M_n$ of the complex is projective, but the converse fails.
\end{lem}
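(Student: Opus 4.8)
The statement has two halves---that dg-projectivity forces each term to be projective, and that the converse fails---which I would establish separately. For the forward direction, the plan is to show directly that for each fixed $n$ the functor $\Hom_R(M_n,-)$ sends surjections of $R$-modules to surjections. Given a surjection $g\colon A \twoheadrightarrow B$, I would introduce the contractible ``disk'' complex $\mathrm{D}^n(A)$ with $A$ in homological degrees $n+1$ and $n$ and identity differential, and likewise $\mathrm{D}^n(B)$. A direct check identifies $Z_0\Hom_R(M,\mathrm{D}^n(A))$ with $\Hom_R(M_n,A)$ naturally in $A$: a chain map $M \to \mathrm{D}^n(A)$ is freely determined by its degree-$n$ component $M_n \to A$, the degree-$(n+1)$ component being forced to be $f_n\circ d^M_{n+1}$. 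Since $\mathrm{D}^n(g)$ is a surjective quasi-isomorphism (both disks are acyclic and $g$ is surjective), the hypothesis that $M$ is dg-projective makes $\Hom_R(M,\mathrm{D}^n(A)) \to \Hom_R(M,\mathrm{D}^n(B))$ a surjective quasi-isomorphism. The key auxiliary fact I would invoke is that a surjective quasi-isomorphism $\phi$ induces a surjection on zero-cycles: its kernel is acyclic by the long exact sequence of $0 \to \ker\phi \to M \to M' \to 0$, so any zero-cycle lifts once corrected by a boundary in $\ker\phi$. Chaining the identifications then shows $\Hom_R(M_n,A) \to \Hom_R(M_n,B)$ is onto, so $M_n$ is projective.

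For the converse I would exhibit an acyclic complex of projectives that is not dg-projective. Over $R = k[x]/(x^2)$ for a field $k$, take the two-sided infinite complex $P$ with $P_n = R$ in every degree and every differential equal to multiplication by $x$. Each term is free, and since $\ker(x) = \mathrm{im}(x) = (x)$ the complex is acyclic, so $P \to 0$ is a surjective quasi-isomorphism. If $P$ were dg-projective it would send this map to a quasi-isomorphism $\Hom_R(P,P) \to 0$, forcing $\Hom_R(P,P)$ to be acyclic and hence $\mathrm{id}_P$ to be null-homotopic. I would rule this out by the explicit computation that a contracting homotopy amounts to elements $a_n \in R$ with $x a_n + a_{n-1} x = 1$ for all $n$; the left-hand side always lies in $(x)$ and so can never equal $1$. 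Thus no such homotopy exists, $\Hom_R(P,P)$ has nonzero $H_0$, and $P$ is a complex of projectives that is not dg-projective.

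The main obstacle is the bookkeeping in the forward direction: choosing the grading on the disk complex so that $Z_0\Hom_R(M,\mathrm{D}^n(A)) \cong \Hom_R(M_n,A)$ holds naturally, and verifying that a surjective quasi-isomorphism lifts zero-cycles. Once these two points are in place the argument is formal, and the counterexample is a routine explicit computation.
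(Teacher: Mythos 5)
Your proposal is correct. Note, however, that the paper does not actually prove this lemma: it records it as a known fact with a citation to~\cite[9.6.1]{AvramovFoxbyHalperin03}, so there is no in-paper argument to compare against. What you have written is essentially the standard proof underlying that citation, and all the steps check out. In the forward direction, the identification $Z_0\Hom_R(M,\mathrm{D}^n(A)) \cong \Hom_R(M_n,A)$ is right with the paper's homological conventions (differentials lower degree, so the disk with $A$ in degrees $n+1$ and $n$ does make a chain map $M \to \mathrm{D}^n(A)$ freely determined by its degree-$n$ component, with $f_{n+1} = f_n \circ d^M_{n+1}$ and the remaining squares automatic since $d^2 = 0$); your auxiliary lemma that a surjective quasi-isomorphism is surjective on zero-cycles is correct via acyclicity of the kernel, and it is exactly what is needed to pass from the paper's definition of dg-projectivity (that $\Hom_R(M,-)$ preserves surjective quasi-isomorphisms) to surjectivity of $\Hom_R(M_n,A) \to \Hom_R(M_n,B)$. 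For the converse, your complex over $k[x]/(x^2)$ is the classical counterexample: it is acyclic since $\ker(x) = \mathrm{im}(x) = (x)$, a nullhomotopy of the identity would give $(a_n + a_{n-1})x = 1$, impossible as $(x)$ is proper, and dg-projectivity applied to the surjective quasi-isomorphism $P \to 0$ would force $\Hom_R(P,P)$ to be acyclic and hence $\mathrm{id}_P$ nullhomotopic. What your write-up buys over the paper's treatment is self-containedness; the paper's choice to cite instead is just economy, since the lemma plays only an auxiliary role there (its uses, in Lemma~\ref{lem:factoring preserves completion} and Proposition~\ref{prop:QA}, need only that dg-projective complexes are termwise flat).
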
 

We have seen in Proposition~\ref{prop:homological-prop-adic} that the $I$-adic 
completion functor is neither left nor right exact. For homological purposes 
it is convenient to consider its total left derived functor. 

\begin{defin}
The total left derived functor of the $I$-adic completion $\mathbb{L}(-)_I^\wedge\colon \D(R) \to \D(R)$ is defined by $\mathbb{L}(M)_I^\wedge = P_I^\wedge$ where $P \xrightarrow{\sim} M$ is a dg-projective resolution of $M$. This comes equipped with a natural map $\gamma^\mathbb{L}\colon M \to\mathbb{L}(M)_I^\wedge$.
\end{defin}

\begin{rem}\label{rem-meaning-L_n-complexes}\leavevmode
\begin{itemize}
  \item[(a)] If $M$ is a complex, $H_n(\mathbb{L}(M)_I^\wedge)$ should not be confused with $L_n^I M$ as defined in Section~\ref{sec:exttocomplexes}. However, note that if $M$ is a module then the two notions do coincide.
  \item[(b)] We note that the complexes $M$ for which the map $\gamma^\mathbb{L}\colon M \to\mathbb{L}(M)_I^\wedge$ is a quasi-isomorphism are called \emph{$I$-adically cohomologically complete} in~\cite{MGM}.
  \end{itemize}
\end{rem}

 Any complex in $\D(R)$ admits a dg-projective resolution~\cite[9.3.3]{AvramovFoxbyHalperin03} so the 
 total left derived functor $\mathbb{L}(-)^\wedge_I$ always exists. 
 Furthermore, it is independent of the chosen dg-projective resolution. For more details, see~\cite[\S 3]{MGM} and~\cite{AJL}.

\begin{rem}
It would be enough to calculate $\mathbb{L}(-)_I^\wedge$ using the weaker notions of a $K$-projective resolution~\cite[1.1]{Spaltenstein}, or of a dg-flat or $K$-flat resolution by~\cite[3.6]{MGM}.  Since the dg-projective objects are the cofibrant objects in the projective model structure on $\Ch(R)$ (see Proposition~\ref{prop:proj-model}), we choose throughout to work with dg-projective resolutions instead. 
\end{rem}

\section{Koszul complexes}\label{sec:koszul}
In this section we collect a few results about Koszul complexes required for the rest of the paper. 

\begin{defin}\label{defn:koszul}
Let $R$ be a commutative ring and $I = (x_1,\ldots,x_n)$ a finitely generated ideal. 
\begin{itemize}
\item The \emph{unstable Koszul complex} denoted $K(I)$ is $$K(I) = K(x_1) \otimes_R \cdots \otimes_R K(x_n)$$ where $K(x_i)$ is the complex $R \xrightarrow{x_i} R$ in degrees 0 and -1.
\item The \emph{stable Koszul complex} denoted $K_\infty(I)$ is defined by $$K_\infty(I) = K_\infty(x_1) \otimes_R \cdots \otimes_R K_\infty(x_n)$$ where $K_\infty(x_i)$ is the complex $R \to R[1/x_i]$ in degrees 0 and -1.
\end{itemize}
\end{defin}

\begin{rem}
We note that in our terminology, the stable Koszul complex is what some authors would call the \v{C}ech complex. This is an unfortunate (but well-established) difference in convention.
\end{rem}

\begin{rem}\label{rem:radical}
A priori, it appears that the stable Koszul complex depends on the chosen set of generators for $I$. In fact, the stable Koszul complex depends only on the radical of the ideal up to a zig-zag of natural quasi-isomorphisms by~\cite[1.2]{GreenleesMay95b} and the comments after it.
\end{rem}

We now introduce some useful notation and terminology. 

\begin{defin}
Consider complexes $M,N \in \D(R)$. 
\begin{itemize}
\item We denote by $\mathrm{Loc}(M)$ the localizing subcategory generated by $M$, i.e., the smallest replete full subcategory of $\D(R)$ containing $M$ which is closed under retracts, suspensions, completing triangles and arbitrary coproducts.
\item We say that $N$ is \emph{built} from $M$ (or $M$ \emph{builds} $N$) if $N \in \mathrm{Loc}(M)$.
\end{itemize}
\end{defin} 

We will use the following results frequently throughout the paper.
\begin{lem}[{\cite[5.4, 5.5]{Greenlees01b},~\cite[6.1]{DwyerGreenlees02}}]\label{lem-koszul-build}
Let $R$ be a commutative ring and $I$ be a finitely generated ideal. The unstable Koszul complex $K(I)$, the stable Koszul complex $K_\infty(I)$ and $R/I$ all build each other.
\end{lem}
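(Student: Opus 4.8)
The plan is to prove the stronger statement that the three localizing subcategories $\mathrm{Loc}(K(I))$, $\mathrm{Loc}(K_\infty(I))$ and $\mathrm{Loc}(R/I)$ coincide. The first step is the observation that each of these is a \emph{tensor ideal}. For a generator $G$, consider the class $\{X \in \D(R) : G \Lotimes_R X \in \mathrm{Loc}(G)\}$. The derived tensor product preserves triangles and coproducts (and, since $K(I)$ and $K_\infty(I)$ are bounded complexes of flat modules, it preserves quasi-isomorphisms), so this class is localizing; it contains $R$ because $G \Lotimes_R R \simeq G$, and as $R$ generates $\D(R)$ the class must be all of $\D(R)$. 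Hence $G \Lotimes_R X \in \mathrm{Loc}(G)$ for every $X$, for each of the three generators.

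With the tensor-ideal property available, the two ``builds $R/I$'' directions reduce to direct computations. First I would compute $K(I) \Lotimes_R R/I$: since $K(I)$ is the Koszul complex on $x_1,\dots,x_n$, tensoring with $R/I$ yields the Koszul complex on the images $\bar x_i = 0 \in R/I$, namely $\bigotimes_{i=1}^n (R/I \xrightarrow{0} R/I)$, which is a finite direct sum of suspensions of $R/I$. Thus $R/I$ is a retract of $K(I) \Lotimes_R R/I \in \mathrm{Loc}(K(I))$, giving $R/I \in \mathrm{Loc}(K(I))$. Analogously $K_\infty(x_i) \Lotimes_R R/I = \big(R/I \to (R/I)[1/\bar x_i]\big) = (R/I \to 0) \simeq R/I$, so iterating over the generators gives $K_\infty(I) \Lotimes_R R/I \simeq R/I$ and hence $R/I \in \mathrm{Loc}(K_\infty(I))$.

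For the reverse inclusions I would prove once and for all that any bounded complex whose homology modules are $I$-power-torsion lies in $\mathrm{Loc}(R/I)$, and then apply this to $K(I)$ (whose Koszul homology is annihilated by $I$) and to $K_\infty(I)$ (whose homology consists of the $I$-power-torsion local cohomology modules). This rests on two building facts. On the one hand, every bounded complex is built from its homology modules through the triangles of its truncation tower. On the other hand, every $I$-power-torsion module $M$ lies in $\mathrm{Loc}(R/I)$: writing $M = \bigcup_k M[I^k]$ as a filtered colimit of $R/I^k$-modules, each $R/I^k$ is built from $R/I$ by the finite filtration with subquotients $I^j/I^{j+1}$ (which are $R/I$-modules, hence built from $R/I$ via a free resolution over $R/I$), each $R/I^k$-module is built from $R/I^k$ via a free resolution, and the colimit is obtained as a telescope triangle; all of these operations stay inside $\mathrm{Loc}(R/I)$.

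Combining the two groups of inclusions yields $\mathrm{Loc}(K(I)) = \mathrm{Loc}(R/I) = \mathrm{Loc}(K_\infty(I))$, which is exactly the assertion that $K(I)$, $K_\infty(I)$ and $R/I$ build each other. The cleanest inputs are the retract and isomorphism computations of the second paragraph, which use nothing beyond the tensor-ideal property; I expect the main obstacle to be the reverse direction of the third paragraph, where one must carefully justify assembling a bounded complex from its homology and, more delicately, building an arbitrary $I$-power-torsion module from $R/I$ through the iterated extensions and filtered colimits described above.
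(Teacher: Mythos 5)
Your proposal is correct and is in essence the standard argument: the paper gives no proof of this lemma, delegating it to Greenlees and Dwyer--Greenlees, and your steps (the subcategories $\mathrm{Loc}(G)$ are tensor ideals because $\mathrm{Loc}(R)=\D(R)$; the splitting $K(I)\Lotimes_R R/I\simeq \bigoplus_j \Sigma^{-j}(R/I)^{\oplus\binom{n}{j}}$ and the equivalence $K_\infty(I)\Lotimes_R R/I\simeq R/I$ in one direction; torsion homology, truncation triangles and telescopes in the other) are exactly the ones used in those sources. The only point worth making fully explicit is that, for a general finitely generated ideal in a possibly non-Noetherian ring, the homology of $K_\infty(I)$ is element-wise $I$-power-torsion because $K_\infty(I)\simeq \operatorname*{colim}_k K(x_1^k,\ldots,x_n^k)$ and homology commutes with this filtered colimit, with $H_*K(x_1^k,\ldots,x_n^k)$ annihilated by $(x_1^k,\ldots,x_n^k)\supseteq I^{n(k-1)+1}$.
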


\begin{prop}\label{prop-equivalent}
Let $R$ be a commutative ring and $I$ be a finitely generated ideal. The following are equivalent for a complex $M \in \D(R)$:
\begin{itemize}
\item[(a)] $K(I)\Lotimes_R M \simeq 0$;
\item[(b)] $K_\infty(I) \Lotimes_R M\simeq 0$;
\item[(c)] $R/I \Lotimes_R M \simeq 0$;
\item[(d)] $\RHom_R(K(I),M)\simeq 0$;
\item[(e)] $\RHom_R(K_\infty(I), M)\simeq 0$;
\item[(f)] $\RHom_R(R/I, M) \simeq 0$.
\end{itemize}
\end{prop}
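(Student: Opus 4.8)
The plan is to prove the equivalence of all six conditions by first establishing that (a), (b), (c) are equivalent to each other, then that (d), (e), (f) are equivalent to each other, and finally linking the two groups. The key input is Lemma~\ref{lem-koszul-build}, which says that $K(I)$, $K_\infty(I)$ and $R/I$ all build one another. The strategy rests on the general principle that if $X$ builds $Y$, then any exact functor out of $\D(R)$ that kills $X$ also kills $Y$; more precisely, the class $\{N : F(N) \simeq 0\}$ is a localizing subcategory whenever $F$ is a coproduct-preserving triangulated functor, so it contains $\mathrm{Loc}(X)$ as soon as it contains $X$.

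First I would handle the derived tensor conditions (a), (b), (c). Fix $M \in \D(R)$ and consider the functor $F(-) = (-) \Lotimes_R M$. This is a triangulated functor preserving arbitrary coproducts, so the full subcategory $\{X \in \D(R) : X \Lotimes_R M \simeq 0\}$ is localizing. Since $K(I)$, $K_\infty(I)$ and $R/I$ build each other by Lemma~\ref{lem-koszul-build}, each lies in the localizing subcategory generated by either of the others. Hence if any one of $K(I) \Lotimes_R M$, $K_\infty(I) \Lotimes_R M$, $R/I \Lotimes_R M$ is zero, then so are the other two, giving (a) $\Leftrightarrow$ (b) $\Leftrightarrow$ (c).

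Next I would treat the $\RHom$ conditions (d), (e), (f). Here the relevant functor is $G(-) = \RHom_R(-, M)$, which is triangulated but \emph{contravariant}, and crucially it sends coproducts in the source to products in the target. So the subcategory $\{X : \RHom_R(X, M) \simeq 0\}$ is again closed under suspensions, retracts and completing triangles, and it is closed under coproducts precisely because $\RHom_R(\coprod_\alpha X_\alpha, M) \simeq \prod_\alpha \RHom_R(X_\alpha, M)$, which vanishes when each factor does. Thus this subcategory is localizing, and the same building argument via Lemma~\ref{lem-koszul-build} shows (d) $\Leftrightarrow$ (e) $\Leftrightarrow$ (f).

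Finally, I would connect the two groups, and this linking step is where I expect the main subtlety to lie. The cleanest bridge is through the compactness of $R/I$ (equivalently $K(I)$), which is perfect being a finite complex of finitely generated free modules. By Remark~\ref{rem-dual-perfect-comp}, for a perfect complex the canonical map $\RHom_R(K(I), R) \Lotimes_R M \to \RHom_R(K(I), M)$ is a quasi-isomorphism. Since $\RHom_R(K(I), R)$ is again perfect and builds the same localizing subcategory as $K(I)$ (a perfect complex and its dual generate the same thickening up to the unit $R$, and one checks the dual of the Koszul complex is a shift of the Koszul complex itself), vanishing of $K(I) \Lotimes_R M$ is equivalent to vanishing of $\RHom_R(K(I), M)$. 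This yields (a) $\Leftrightarrow$ (d), and combined with the two chains above closes the cycle. The one point requiring care is ensuring the dual of $K(I)$ indeed lies in $\mathrm{Loc}(K(I))$ and conversely; this follows because $K(I)$ is self-dual up to a shift, so the implication is immediate rather than requiring a separate building argument.
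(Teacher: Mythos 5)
Your proposal is correct and follows essentially the same route as the paper's proof: both show (a)$\Leftrightarrow$(b)$\Leftrightarrow$(c) and (d)$\Leftrightarrow$(e)$\Leftrightarrow$(f) by observing that the kernels of $-\Lotimes_R M$ and $\RHom_R(-,M)$ are localizing subcategories and invoking Lemma~\ref{lem-koszul-build}, and both close the loop via (a)$\Leftrightarrow$(d) using the perfectness of $K(I)$ together with its self-duality up to a shift, exactly as in Remark~\ref{rem-dual-perfect-comp}. The only stylistic difference is your brief detour about $DK(I)$ generating the same localizing subcategory, which you correctly note is unnecessary once one uses $DK(I)\simeq\Sigma^n K(I)$ directly.
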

\begin{proof}
 Consider the subcategory $\L$ of complexes 
 $N\in \D(R)$ such that $N \Lotimes_R M$ is acyclic. One easily checks  
 that $\L$ is a localizing subcategory. Since $K(I)$, $K_\infty(I)$ and $R/I$ 
 build each other by Lemma~\ref{lem-koszul-build}, we see that (a), (b) and (c) are 
 all equivalent. 
 Now consider the subcategory $\CS$ of complexes $N \in \D(R)$ such that 
 $\RHom_R(N,M)$ is acyclic. Again one checks that $\CS$ is localizing and so by 
 Lemma~\ref{lem-koszul-build} we see that (d), (e) and (f) are equivalent. 
 It is only left to show 
 that (a) is equivalent to (d). We note that the dual $DK(I) = \RHom_R(K(I),R)$ is 
 equivalent to $\Sigma^{n} K(I)$ where $n$ is the number of generators of $I$. 
 Moreover, as $K(I)$ is perfect 
 there is a canonical equivalence 
 \[
 \RHom_R(K(I), M) \simeq \Sigma^n K(I)\Lotimes_R M
 \]
 for all $M \in \D(R)$, see Remark~\ref{rem-dual-perfect-comp}. 
 This shows that (a) is equivalent to (d) which concludes 
 the proof.
\end{proof}

\section{Derived completion}\label{sec:derived completion}
In this section we introduce the derived completion functor and recall how this relates to the $I$-adic completion and $L_0^I$-completion functors. 

\begin{defin}
For a complex $M$, the derived completion functor $\Lambda_I\colon \D(R) \to \D(R)$ is defined by $\Lambda_IM = \mathbb{R}\mathrm{Hom}_R(K_\infty(I),M).$ A complex $M\in \D(R)$ is said to be \emph{derived complete} if the natural map $\lambda_M\colon M \to \Lambda_IM$ is a quasi-isomorphism. 
\end{defin}

\begin{rem}
We warn the reader that in~\cite{MGM} and other algebraic sources, $\Lambda_I$ denotes the $I$-adic completion functor which we instead denote by $(-)_I^\wedge$. Throughout the paper we have chosen to follow topological conventions where $\Lambda_I$ traditionally denotes the derived completion functor.
\end{rem}

The following result clarifies the relation between the derived completion and 
the total left derived functor of the adic completion. Recall from Section~\ref{sec:totalleft} that the total left derived functor $\mathbb{L}(-)_I^\wedge$ of the $I$-adic completion is calculated using a dg-projective resolution. 
\begin{thm}[{\cite[5.25]{MGM}}]\label{thm:local homology and L-completion}
Let $R$ be a commutative ring and $I$ be a weakly pro-regular ideal. For all $M\in\D(R)$, there is a natural quasi-isomorphism
	\[
	\tau_M \colon \Lambda_I (M) \xrightarrow{\sim} \mathbb{L}(M)_I^{\wedge}
	\]
	making the diagram
	\[
	\begin{tikzcd}
	& M \arrow[dl, "\lambda_M"'] \arrow[dr, "\gamma^\mathbb{L}_M"] & \\
	\Lambda_IM \arrow[rr, "\tau_M"'] & & \mathbb{L}(M)^{\wedge}_I
	\end{tikzcd}
	\]
	commute.
\end{thm}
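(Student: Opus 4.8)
The plan is to reduce the statement to a levelwise computation on complexes of projectives and to model $K_\infty(I)$ by a telescope of Koszul complexes, thereby isolating the use of weak pro-regularity in a single pro-system argument. Since $\mathbb{L}(-)_I^\wedge$ is by definition the levelwise completion of a dg-projective resolution $P \xrightarrow{\sim} M$, and since $\Lambda_I = \RHom_R(K_\infty(I),-)$ preserves quasi-isomorphisms, it is enough to build a natural quasi-isomorphism $\tau_P \colon \Lambda_I P \xrightarrow{\sim} P_I^\wedge$ for dg-projective $P$ --- note that $\mathbb{L}(P)_I^\wedge = P_I^\wedge$ and that $P$ is levelwise projective by Lemma~\ref{lem:dgprojimpliesproj} --- and then to transport $\tau$ along the resolution. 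Independence of the chosen resolution is automatic since both functors are defined on $\D(R)$ and dg-projective resolutions are unique up to homotopy.

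For the telescope model, I would write $I=(x_1,\dots,x_r)$ and set $T_n = K(x_1^n)\otimes_R\cdots\otimes_R K(x_r^n)$. The chain maps that are the identity in degree $0$ and multiplication by $x_i$ in degree $-1$ organize the $T_n$ into a directed system with $\mathrm{colim}_n T_n = K_\infty(I)$; as this is a filtered (hence telescopic) colimit, $K_\infty(I)\simeq \mathrm{hocolim}_n T_n$, and $\RHom_R(-,P)$ converts it into a homotopy limit, so $\Lambda_I P \simeq \mathrm{holim}_n \RHom_R(T_n,P)$. Each $T_n$ is perfect and self-dual up to a shift, $\RHom_R(T_n,R)\simeq \Sigma^r T_n$, so Remark~\ref{rem-dual-perfect-comp} gives $\RHom_R(T_n,P)\simeq \Sigma^r(T_n\otimes_R P)$, whence $\Lambda_I P \simeq \Sigma^r\,\mathrm{holim}_n(T_n\otimes_R P)$. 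The bottom homology of the Koszul complex $T_n\otimes_R P$ is $P/(x_1^n,\dots,x_r^n)P$ and the transition maps produce the expected tower, while the augmentations $T_n\to R$ (identity in degree $0$) assemble to $K_\infty(I)\to R$ and realize $\lambda_P$.

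It then remains to see that, via these maps, $\Sigma^r\,\mathrm{holim}_n(T_n\otimes_R P)$ is quasi-isomorphic to $\lim_n P/(x^n)P = P_I^\wedge$, and this is exactly where weak pro-regularity enters: it forces the pro-system of higher Koszul homologies of $\{T_n\otimes_R P\}$ to be pro-zero and the Koszul tower to be pro-isomorphic to the adic tower $\{P/I^nP\}$, so that the Milnor $\lim^1$-sequence for the homotopy limit collapses to the honest inverse limit $P_I^\wedge$, placed in the single degree that cancels the shift $\Sigma^r$. For $P$ a module this is the assertion that the $L_n^I$ are computed by the stable Koszul complex; the extension to unbounded complexes is the technical input drawn from Porta--Shaul--Yekutieli. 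This yields $\tau_P$, and tracing the augmentation $K_\infty(I)\to R$ against the projections $P\to P/(x^n)P$ through the identifications above shows $\tau_P\circ\lambda_P = \gamma^\mathbb{L}_P$, giving the commuting triangle.

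The hard part, I expect, is the pro-system bookkeeping in the last step: one must control the derived inverse limit of $\{T_n\otimes_R P\}$ uniformly across all homological degrees of a possibly unbounded $P$, and verify that weak pro-regularity makes every error term vanish --- not merely degree by degree on homology modules. By comparison, the duality shifts and the matching of the augmentation with the adic projections are routine, but they still need to be tracked carefully to obtain the stated commuting triangle rather than just an abstract equivalence $\Lambda_I P \simeq P_I^\wedge$.
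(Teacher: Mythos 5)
The paper gives no proof of Theorem~\ref{thm:local homology and L-completion}: it is imported verbatim, with proof by citation, from Porta--Shaul--Yekutieli~\cite[5.25]{MGM}, so there is no internal argument to compare against. Your outline is a faithful reconstruction of how that cited result is actually proved --- modelling $K_\infty(I)$ as the filtered colimit of the Koszul complexes $T_n$ on powers of the generators, using Koszul self-duality to rewrite $\Lambda_I P$ as $\Sigma^r\,\mathrm{holim}_n(T_n\otimes_R P)$ for dg-projective $P$, and letting weak pro-regularity supply the pro-vanishing that collapses the Milnor sequence onto $P_I^\wedge$ --- but at the decisive step (controlling the tower uniformly across all degrees of an unbounded $P$) you invoke Porta--Shaul--Yekutieli yourself, so your argument, like the paper's, ultimately rests on exactly the citation~\cite[5.25]{MGM}.
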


The next result explains the relation between derived completion and $L_0^I$-completion.

\begin{prop}\label{prop:implication-completions}
Let $R$ be a commutative ring and $I$ be a weakly pro-regular ideal. For any complex $M$, there is a convergent spectral sequence 
\[
E^2_{p,q}= L^I_p(H_qM) \Rightarrow H_{p+q}(\Lambda_IM).
\]
Furthermore, the following implications hold:
\begin{itemize}
\item[(a)] A complex $M$ is derived complete if and only if each homology group $H_nM$ is $L_0^I$-complete. In particular, a module is derived complete if 
and only if it is $L_0^I$-complete.
\item[(b)] If each homology group $H_nM$ is $I$-adically complete, then $M$ is derived complete, but the converse fails in general. 
\item[(c)] If a complex $M$ is $L_0^I$-complete, then it is also derived complete. 
In particular, an $I$-adically complete complex is derived complete. 
\end{itemize}
\end{prop}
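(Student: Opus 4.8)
The plan is to construct the spectral sequence first, then extract the module-level comparison between $L_0^I$-completeness and derived completeness, and finally read off (a), (b) and (c). For the spectral sequence I would use Theorem~\ref{thm:local homology and L-completion} to identify $\Lambda_I M$ with the total left derived functor $\mathbb{L}(M)_I^\wedge$. Choosing a dg-projective (or Cartan--Eilenberg) resolution $P \xrightarrow{\sim} M$, the value $\mathbb{L}(M)_I^\wedge = P_I^\wedge$ is computed by applying $(-)_I^\wedge$ levelwise, and the standard hyper-derived-functor spectral sequence of the resulting bicomplex reads
\[
E^2_{p,q} = L_p^I(H_q M) \Rightarrow H_{p+q}(\mathbb{L}(M)_I^\wedge) \cong H_{p+q}(\Lambda_I M),
\]
since the left derived functors of $(-)_I^\wedge$ evaluated on the modules $H_q M$ are exactly $L_p^I(H_q M)$ (Remark~\ref{rem-meaning-L_n-complexes}). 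The one genuine subtlety is convergence when $M$ is unbounded. Here I would observe that for a module $N$ the complex $\Lambda_I N = \RHom_R(K_\infty(I), N)$ is concentrated in homological degrees $0,\ldots,n$, where $n$ is the number of generators of $I$, because $K_\infty(I)$ is a bounded complex of flat modules concentrated in degrees $0,\ldots,-n$. Hence $L_p^I N = 0$ unless $0 \le p \le n$, so each anti-diagonal $p+q=d$ of the $E^2$-page has only finitely many nonzero entries, giving strong convergence even for unbounded $M$.

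The key module-level inputs are two observations. First, a module $N$ is $L_0^I$-complete if and only if it is derived complete: if $N$ is $L_0^I$-complete then $N = L_0^I N$ and $L_p^I N = 0$ for $p \ge 1$ by Lemma~\ref{lem:higher L vanishes}, so $\lambda_N$ is a quasi-isomorphism; conversely if $N$ is derived complete then $\lambda_N$ induces on $H_0$ the natural map $\eta \colon N \to L_0^I N$, which is therefore an isomorphism. Second, and crucial for the forward direction of (a), I claim $H_q(\Lambda_I M)$ is $L_0^I$-complete for every complex $M$. For a module $N$ this follows by resolving $N$ by projectives $P_\bullet$: the complex $(P_\bullet)_I^\wedge$ consists of $I$-adically complete, hence $L_0^I$-complete, modules, so Corollary~\ref{cor:Lcompletehomology} shows each $L_p^I N = H_p((P_\bullet)_I^\wedge)$ is $L_0^I$-complete. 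For general $M$, every entry of the $E^2$-page is then $L_0^I$-complete; since the $L_0^I$-complete modules form an abelian subcategory with exact inclusion that is closed under extensions (Propositions~\ref{prop:exact} and~\ref{prop:L properties}), all $E^\infty_{p,q}$ and the finitely many filtration quotients of $H_d(\Lambda_I M)$ remain $L_0^I$-complete, so $H_d(\Lambda_I M)$ is $L_0^I$-complete.

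With these in hand, (a), (b) and (c) follow formally. For (a): if each $H_q M$ is $L_0^I$-complete then $E^2_{p,q}$ vanishes for $p\ge 1$ and equals $H_q M$ for $p=0$, so the spectral sequence collapses onto the edge, identifying $\lambda_M$ with a quasi-isomorphism; conversely if $M$ is derived complete then $H_q M \cong H_q(\Lambda_I M)$ is $L_0^I$-complete by the claim above, and the module statement is the special case already established. For (b): an $I$-adically complete module is $L_0^I$-complete (Proposition~\ref{prop:adic is separated and L}), so the hypothesis feeds into (a); the converse fails because there exist $L_0^I$-complete modules that are not $I$-adically separated, hence not $I$-adically complete (Proposition~\ref{prop:adic is separated and L}), and any such module placed in a single degree is derived complete by (a) yet does not have $I$-adically complete homology. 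For (c): an $L_0^I$-complete complex has $L_0^I$-complete homology by Corollary~\ref{cor:Lcompletehomology}, so (a) applies, and the $I$-adically complete case is subsumed since such complexes are in particular $L_0^I$-complete.

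The main obstacle will be the forward direction of (a) — that a derived complete complex has $L_0^I$-complete homology — together with the convergence of the spectral sequence for unbounded $M$. Both rest on the boundedness $0 \le p \le n$ of the derived functors $L_p^I$ coming from the stable Koszul complex, and on propagating the closure properties of the $L_0^I$-complete subcategory through the (now convergent) spectral sequence; the rest is bookkeeping.
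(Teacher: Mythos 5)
Your route is genuinely different from the paper's, which simply outsources the hard points: the spectral sequence to \cite[3.3]{GreenleesMay95b}, part (a) to \cite[3.7]{BHV20}, \cite[6.15]{DwyerGreenlees02} or \cite[6.11]{PolWilliamson}, and the failure of the converse in (b) to the explicit example in \cite[4.33]{MGM}. Your self-contained outline is reasonable, and identifying the finite amplitude $0 \le p \le n$ of $L^I_p$ as the engine behind both convergence and the forward direction of (a) is the right mechanism. However, there are concrete problems. First, your proof of that amplitude bound is incorrect as stated: flatness of the terms of $K_\infty(I)$ is irrelevant for computing $\RHom_R(K_\infty(I),N)$ — the modules $R[1/x]$ are flat but not projective, so $\RHom$ is not the naive $\Hom$ complex of $K_\infty(I)$. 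The standard fix is to replace $K_\infty(I)$ by the quasi-isomorphic telescope complex, a bounded complex of free modules (as in \cite{GreenleesMay92} and \cite{MGM}), after which the degree bound does follow. Relatedly, the ``standard hyper-derived-functor spectral sequence'' is not standard for unbounded $M$: the totalization of a Cartan--Eilenberg resolution of an unbounded complex need not compute $\mathbb{L}(-)_I^\wedge$, and this is exactly the issue that the finite amplitude (or the citation to Greenlees--May) has to repair; as written, you invoke the spectral sequence and its abutment before the finiteness enters the argument.

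Second, your filtration argument for ``$H_d(\Lambda_I M)$ is $L_0^I$-complete'' requires the class of $L_0^I$-complete modules to be closed under extensions, and neither Proposition~\ref{prop:exact} nor Proposition~\ref{prop:L properties} gives this: an abelian full subcategory with exact inclusion need not be extension-closed. The fact is true — it follows from \cite[Appendix A]{HoveyStrickland99}, or, within this paper's toolkit, from the identification of $L_0^I$-complete modules with $I$-contramodules in Section~\ref{sec:contramodules} together with the five-term exact sequence for $\mathrm{Ext}^*_R(R[1/x],-)$ applied to a short exact sequence — but since it carries the full weight of the forward direction of (a), it must be proved or cited. Third, in (b) you assert that there exist $L_0^I$-complete modules which are not $I$-adically separated, citing Proposition~\ref{prop:adic is separated and L}; that proposition only characterizes $I$-adic completeness and produces no such module, so the existence claim is unsupported. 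A ``the converse fails in general'' statement needs an actual example (the paper points to \cite[4.33]{MGM}); without one, this part of (b) is unproven.
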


\begin{proof}
 For the existence of the spectral sequence see~\cite[3.3]{GreenleesMay95b}.
Part (a) follows from the spectral sequence; see~\cite[3.7]{BHV20},~\cite[6.15]{DwyerGreenlees02} 
 or~\cite[6.11]{PolWilliamson}. The first claim in part (b) follows from (a) and 
 the fact that an $I$-adically complete module is also $L_0^I$-complete by 
 Proposition~\ref{prop:adic is separated and L}. An explicit example of a derived 
 complete complex whose homology groups are not $I$-adically complete is given 
 in~\cite[4.33]{MGM}. For (c) note that the homology groups of $M$ are 
 $L_0^I$-complete by Corollary~\ref{cor:Lcompletehomology}. 
 It then follows from (a) that $M$ is derived complete. 
 The final part of (c) follows from 
 Proposition~\ref{prop:adic is separated and L}.
\end{proof}

\section{Contramodules and \texorpdfstring{$L_0^I$}{L}-completion}\label{sec:contramodules}
In this section, we show that $I$-contramodules and $L_0^I$-complete modules are in fact different points of view on the same objects. This has previously appeared in work of Positselski~\cite[2.2]{Positselski16}, but since the author does not use the language of $L_0^I$-complete modules we reproduce this result and give an alternative proof here. This observation clarified many things for the authors. In particular, Positselski has proved many results about the relation of derived completion with contramodules, see~\cite{Positselski16, Positselski17, Positselski}, but the relation with the work of Greenlees-May~\cite{GreenleesMay92} and Barthel-Heard-Valenzuela~\cite{Barthel, BHV20} was unclear to the present authors.
\begin{defin} Let $R$ be a commutative ring.
\begin{itemize}
\item Let $s \in R$. An $R$-module $M$ is said to be an \emph{$s$-contramodule} if $$\mathrm{Hom}_R(R[1/s],M) = 0 = \mathrm{Ext}_R^1(R[1/s],M).$$ where $R[1/s]$ is localization at the multiplicative set $\{1,s,s^2,\ldots\}$.
\item Let $I = (x_1,\ldots ,x_n)$ be a finitely generated ideal of $R$. An $R$-module is said to be an \emph{$I$-contramodule} if it is an $x_i$-contramodule for each $1 \leq i \leq n$.
\end{itemize}
\end{defin}

It follows from~\cite[5.1]{Positselski17} that the property of being 
an $I$-contramodule only depends on the ideal $I$ and not on the chosen generators. Using that $R[1/s]=R[1/s^n]$ for all $s\in I$ and $n>0$, we deduce that this property depends only on the radical of $I$.

\begin{prop}
Let $R$ be a commutative ring and $I$ be a weakly pro-regular ideal. An $R$-module $M$ is $L_0^I$-complete if and only if it is an $I$-contramodule.
\end{prop}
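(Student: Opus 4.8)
The plan is to prove the two conditions are equivalent by unwinding both definitions in terms of the stable Koszul complex and the derived completion functor, and then invoking the characterization of derived completeness for modules already established in Proposition~\ref{prop:implication-completions}(a). Since both notions depend only on the radical of $I$ (the $I$-contramodule condition by the remark following the definition, and the $L_0^I$-completeness condition via Remark~\ref{rem:radical}), I may reduce to any convenient generating set.

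\textbf{Step 1: Reformulate the $I$-contramodule condition via the stable Koszul complex.} For a single element $s \in R$, the complex $K_\infty(s)$ is $R \to R[1/s]$ concentrated in degrees $0$ and $-1$. Computing $\RHom_R(K_\infty(s), M)$ for a \emph{module} $M$ gives a two-term complex whose cohomology in the relevant degrees recovers exactly $\Hom_R(R[1/s], M)$ and $\mathrm{Ext}^1_R(R[1/s], M)$, together with a copy of $M$ itself. More precisely, the short exact sequence of complexes relating $K_\infty(s)$ to $R$ and $\Sigma^{-1}R[1/s]$ yields that $\Lambda_s M = \RHom_R(K_\infty(s), M)$ fits in a triangle with $M$ and $\RHom_R(R[1/s], M)$. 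I would spell out that $M$ is an $s$-contramodule precisely when $\RHom_R(R[1/s], M) \simeq 0$, i.e. when the two Ext-groups above both vanish.

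\textbf{Step 2: Assemble the generators and identify $I$-contramodules with derived complete modules.} Using that $K_\infty(I) = K_\infty(x_1) \otimes_R \cdots \otimes_R K_\infty(x_n)$, I would argue that $M$ being an $I$-contramodule (an $x_i$-contramodule for every $i$) is equivalent to $\RHom_R(R[1/x_i], M) \simeq 0$ for each $i$, which in turn is equivalent to $\lambda_M \colon M \to \Lambda_I M$ being a quasi-isomorphism, i.e. to $M$ being derived complete as a complex. The cleanest route is to show the subcategory of derived complete objects is detected one generator at a time: being derived complete with respect to $I$ is the conjunction of being derived complete with respect to each principal ideal $(x_i)$. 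Here I can lean on Proposition~\ref{prop-equivalent}, which shows that acyclicity conditions involving $K_\infty(I)$ and $\RHom_R(K_\infty(I), -)$ are governed by the localizing/colocalizing subcategories generated by the Koszul complexes, allowing me to pass between the single-generator statements and the full ideal.

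\textbf{Step 3: Conclude via Proposition~\ref{prop:implication-completions}(a).} Once I have established that $M$ is an $I$-contramodule if and only if $M$ is derived complete (as a module), I invoke Proposition~\ref{prop:implication-completions}(a), whose final sentence states precisely that a \emph{module} is derived complete if and only if it is $L_0^I$-complete. Chaining these two equivalences gives the claim.

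\textbf{The main obstacle} I expect is Step~1: carefully identifying the cohomology of $\RHom_R(K_\infty(s), M)$ with the contramodule Ext-groups, and in particular checking that the vanishing of \emph{both} $\Hom_R(R[1/s], M)$ and $\mathrm{Ext}^1_R(R[1/s], M)$ is exactly what is needed (rather than just one of them), so that the contramodule condition matches derived completeness on the nose. The multi-generator bookkeeping in Step~2 — propagating the single-element equivalence through the tensor product $K_\infty(I) = \bigotimes_i K_\infty(x_i)$ and ensuring the ``one generator at a time'' reduction is valid — is technically delicate but should follow formally from the localizing-subcategory arguments already in the proof of Proposition~\ref{prop-equivalent}.
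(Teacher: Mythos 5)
Your proposal is correct, but it follows a genuinely different route from the paper's. The paper handles the several-generator problem on the algebraic side: it invokes the Greenlees--May composition formula $L_0^IM = L_0^{(x_n)}L_0^{(x_{n-1})}\cdots L_0^{(x_1)}M$ from \cite[1.9(i)]{GreenleesMay92} to reduce at once to a principal ideal, and in the principal case it runs exactly your Step 1 (the triangle $K_\infty(x) \to R \to R[1/x]$ and its long exact sequence), concluding not through Proposition~\ref{prop:implication-completions}(a) but by reading off $H_0(\Lambda_{(x)}M) \cong L_0^{(x)}M$ directly from Theorem~\ref{thm:local homology and L-completion}. You instead stay on the derived side throughout: contramodule equals derived complete for each generator, generator-wise derived completeness equals derived $I$-completeness, and only your final Step 3 translates to $L_0^I$-completeness. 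Your route buys two things. First, weak pro-regularity is quarantined in the last step, so your Steps 1--2 prove the sharper intermediate statement that $I$-contramodules coincide with derived complete modules for an \emph{arbitrary} finitely generated ideal. Second, phrasing the principal case as the vanishing of all of $\RHom_R(R[1/s],M)$ is legitimate because $R[1/s]$ has projective dimension at most one (it is the colimit of the telescope $R \xrightarrow{s} R \xrightarrow{s} \cdots$), and this is actually cleaner than the paper's argument, whose displayed exact sequence silently omits the term $H_1(\Lambda_{(x)}M)$ and whose ``if'' direction therefore implicitly needs Lemma~\ref{lem:higher L vanishes} to kill that group. The paper's route buys brevity, at the cost of importing the composition formula as a black box.

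The one place your sketch outruns the available toolkit is Step 2: generator-wise detection of derived completeness is true, but it is not a consequence of Proposition~\ref{prop-equivalent}, which concerns acyclicity of $K_\infty(I) \Lotimes_R M$ and $\RHom_R(K_\infty(I),M)$ rather than the completeness condition itself. To close this, let $\check{C}$ denote the cofibre of $K_\infty(I) \to R$, so that $M$ is derived $I$-complete if and only if $\RHom_R(\check{C},M) \simeq 0$, and $\check{C} \simeq R[1/x_i]$ when $I=(x_i)$. In one direction, $\check{C}$ is a finite complex whose terms are finite sums of the localizations $R[1/x_S]$ for $\emptyset \neq S \subseteq \{1,\ldots,n\}$, and each $R[1/x_S] \cong R[1/x_i] \Lotimes_R R[1/x_{S\setminus\{i\}}]$ lies in $\mathrm{Loc}(R[1/x_i])$ for any $i \in S$, because every localizing subcategory of $\D(R)$ is a tensor ideal (for fixed $N$, the complexes $X$ with $X \Lotimes_R N \in \mathrm{Loc}(N)$ form a localizing subcategory containing $R$, and $\mathrm{Loc}(R)=\D(R)$); hence $\check{C} \in \mathrm{Loc}(R[1/x_1],\ldots,R[1/x_n])$, and since $\{N : \RHom_R(N,M)\simeq 0\}$ is localizing, the vanishing of each $\RHom_R(R[1/x_i],M)$ forces $\RHom_R(\check{C},M)\simeq 0$. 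Conversely, $R/I \Lotimes_R R[1/x_i] \simeq 0$ because $x_i \in I$ acts on it both invertibly and as zero, so $K_\infty(I)\Lotimes_R R[1/x_i] \simeq 0$ by Proposition~\ref{prop-equivalent}; the defining triangle then gives $R[1/x_i] \simeq \check{C}\Lotimes_R R[1/x_i]$, and by adjunction $\RHom_R(R[1/x_i],M) \simeq \RHom_R(R[1/x_i],\RHom_R(\check{C},M)) \simeq 0$ whenever $M$ is derived $I$-complete. With this lemma supplied, your argument is complete.
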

\begin{proof}
Let $I = (x_1,\ldots ,x_n)$. By~\cite[1.9(i)]{GreenleesMay92} we have $$L_0^IM = L_0^{(x_n)}L_0^{(x_1,\ldots, x_{n-1})}M = L_0^{(x_n)}L_0^{(x_{n-1})}\cdots L_0^{(x_1)}M,$$ from which one sees that it is sufficient to prove the claim in the principal case. This special case was proved by Yekutieli~\cite[Lemma 7]{Yekutieli15} and here we recall the argument. 

Let us assume that $I=(x)$ is principal and let $M$ be a $R$-module. By definition there is a 
cofibre sequence $K_{\infty}(I) \to R \to R[1/x]$ which induces an exact sequence 
\[
0 \to \Hom_R(R[1/x],M) \to M \to H_0(\mathbb{R}\Hom_R(K_{\infty}(I),M)) \to \mathrm{Ext}^1_R(R[1/x],M) \to 0.
\]
So $M$ is a contramodule if and only if $M \to H_0(\mathbb{R}\Hom_R(K_{\infty}(I),M))$ is an 
isomorphism. Therefore by Theorem~\ref{thm:local homology and L-completion}, $M$ is an $I$-contramodule if and only if $M$ is $L_0^I$-complete. 
\end{proof}

\section{Model structures}
In this section we equip the categories of chain complexes of complete modules with model structures. We refer the reader to~\cite{Hovey99} for the general 
background on model categories. 

For a commutative ring $R$ and an ideal $I$, we fix the following notation:
\begin{itemize}
\item $\Ch(R)$ denotes the category of (unbounded) chain complexes of $R$-modules;
\item $\adicR$ denotes the category of complexes of $I$-adically complete 
$R$-modules;
\item $\LR$ denotes the category of complexes of $L_0^I$-complete $R$-modules.
\end{itemize}
If $I$ is a finitely generated ideal, then we have inclusions
\[
\adicR \subset \LR \subset \Ch(R)
\]
by Corollary~\ref{cor:factor}. 

So that we can refer to it, let us recall the projective model structure on $\Ch(R)$.
\begin{prop}[{\cite[2.3.9, 2.3.11, 4.2.13]{Hovey99}}]\label{prop:proj-model}
There is a model structure on $\Ch(R)$ in which the weak equivalences are the quasi-isomorphisms, the fibrations are the surjections, and the cofibrations are the degreewise split monomorphisms which have dg-projective cokernel. Moreover, this is a symmetric monoidal model category.
\end{prop}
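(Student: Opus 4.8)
The plan is to realise the asserted model structure as a cofibrantly generated one via Kan's recognition theorem \cite[2.1.19]{Hovey99}, and then to verify the monoidal axioms separately. For $n \in \Z$ write $S^n$ for the complex with a single copy of $R$ in degree $n$, and $D^n$ for the contractible complex $R \xrightarrow{\mathrm{id}} R$ placed in degrees $n$ and $n-1$. As generating cofibrations I would take the set $\{S^{n-1} \hookrightarrow D^n : n \in \Z\}$, and as generating trivial cofibrations the set $\{0 \to D^n : n \in \Z\}$; call these sets $\mathsf{I}$ and $\mathsf{J}$.

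First I would assemble the formal inputs. The quasi-isomorphisms satisfy the two-out-of-three property and are closed under retracts, since homology is a functor and a retract of an isomorphism is an isomorphism. As $\Ch(R)$ is locally presentable, every object---in particular every (co)domain appearing in $\mathsf{I}$ and $\mathsf{J}$---is small, so both sets permit the small object argument. The two lifting computations rest on the identifications $\Hom_{\Ch(R)}(D^n,Y)\cong Y_n$ and $\Hom_{\Ch(R)}(S^n,Y)\cong Z_n(Y)$, the module of $n$-cycles. From the first, a map has the right lifting property against $\mathsf{J}$ precisely when it is degreewise surjective, so that $\mathsf{J}\text{-inj}$ is the class of surjections. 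Unwinding the lifting property against $S^{n-1}\hookrightarrow D^n$ is more delicate: a diagram chase shows that it holds for all $n$ exactly when $f$ is simultaneously surjective and a quasi-isomorphism, so that $\mathsf{I}\text{-inj}$ is the class of surjective quasi-isomorphisms. This last computation is the technical heart of the argument.

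With the lifting classes in hand, the remaining hypotheses of the recognition theorem are formal. Each generator $0\to D^n$ lifts against every surjection, hence lies in $\mathsf{I}\text{-cof}$, and is a quasi-isomorphism because $D^n$ is acyclic; since pushouts and transfinite composites of the maps in $\mathsf{J}$ merely adjoin contractible summands, one obtains $\mathsf{J}\text{-cell}\subseteq W\cap\mathsf{I}\text{-cof}$. Both remaining conditions reduce to the equality $\mathsf{I}\text{-inj}=W\cap\mathsf{J}\text{-inj}$ (each side being the surjective quasi-isomorphisms), which is exactly what the two lifting computations provide. This produces the model structure with the stated weak equivalences and fibrations. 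To pin down the cofibrations I would note that every map in $\mathsf{I}$ is a degreewise split monomorphism with dg-projective cokernel, that this class is closed under pushout, transfinite composition and retract, and conversely that any such map lifts against all trivial fibrations; the cofibrant objects are then precisely the dg-projective complexes (cf. Lemma~\ref{lem:dgprojimpliesproj}).

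Finally, for the symmetric monoidal structure I would verify the pushout-product and unit axioms \cite[4.2.6]{Hovey99}. By \cite[4.2.5]{Hovey99} it suffices to test the pushout-product axiom on generators, where I would exploit that for degreewise split monomorphisms $f,g$ the cokernel of the pushout-product $f \square g$ is $\mathrm{coker}(f)\otimes_R\mathrm{coker}(g)$. For two generating cofibrations this cokernel is $S^m\otimes_R S^n\cong S^{m+n}$, which is projective, so $f \square g$ is a degreewise split monomorphism with dg-projective cokernel and hence a cofibration; when one factor is a generating trivial cofibration $0\to D^m$ the cokernel acquires the contractible tensor factor $D^m$ and becomes acyclic, so $f \square g$ is then a trivial cofibration. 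The unit axiom is automatic since the monoidal unit $S^0 = R$ is dg-projective, hence cofibrant. As flagged above, the one genuinely non-formal step is the identification of $\mathsf{I}\text{-inj}$ with the surjective quasi-isomorphisms; once the two lifting classes are known, the rest follows from the standard machinery.
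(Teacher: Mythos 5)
Your proposal is correct and takes essentially the same route as the paper, which establishes this proposition purely by citation to Hovey's book: your argument is exactly the cited one, namely the cofibrantly generated model structure with generating (trivial) cofibrations $S^{n-1}\hookrightarrow D^n$ and $0\to D^n$, the recognition theorem reducing everything to identifying the two injective classes with the surjections and the surjective quasi-isomorphisms, and the pushout-product axiom checked on generators. The steps you flag but leave as sketches (the diagram chase showing that right lifting against $S^{n-1}\hookrightarrow D^n$ characterizes surjective quasi-isomorphisms, and the closure properties giving the description of cofibrations as degreewise split monomorphisms with dg-projective cokernel) are precisely the technical content of Hovey's 2.3.4--2.3.11 and 4.2.13, so nothing is missing in substance.
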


\subsection{Adic completion}
In this section we equip the category $\adicR$ of complexes of $I$-adically complete $R$-modules with a model structure. The procedure to this is similar to that used by the authors for $L_0^I$-complete modules~\cite{PolWilliamson}, see also Rezk~\cite{Rezk}. We remark that the argument we use here, heavily relies upon the good behaviour of the category of $L_0^I$-complete modules.
\begin{lem}\label{lem:adic completion preserves lifting properties}
Let $R$ be a commutative ring and $I$ be a finitely generated ideal.
\begin{itemize}
\item[(a)] The adic completion functor $(-)_I^\wedge$ takes cofibrations in $\Ch(R)$ to morphisms which have the left lifting property with respect to surjective quasi-isomorphisms of complexes of $I$-adically complete modules.
\item[(b)] The adic completion functor $(-)_I^\wedge$ takes acyclic cofibrations in $\Ch(R)$ to morphisms which have the left lifting property with respect to surjections of complexes of $I$-adically complete modules.
\end{itemize}
\end{lem}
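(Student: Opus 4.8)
The plan is to deduce both parts from the lifting axioms of the projective model structure on $\Ch(R)$ (Proposition~\ref{prop:proj-model}) by transporting the lifting problems across the completion adjunction. Applying Lemma~\ref{lem:adic adjunction} levelwise gives an adjunction
\[
\begin{tikzcd}
\Ch(R) \arrow[rr, yshift=1mm, "(-)_I^\wedge"] \arrow[rr, hookleftarrow, yshift=-1mm, "i"'] & & \adicR,
\end{tikzcd}
\]
whose unit is the levelwise completion map $\gamma$. Both statements are then instances of a single argument: a lifting problem for $f_I^\wedge$ against a map $g$ of $\adicR$ is, by adjunction, the same data as a lifting problem for $f$ against $g$ viewed in $\Ch(R)$.

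Concretely, I would fix a cofibration (resp. acyclic cofibration) $f\colon A \to B$ in $\Ch(R)$ and a surjective quasi-isomorphism (resp. surjection) $g\colon X \to Y$ in $\adicR$, together with a commuting square having $f_I^\wedge$ on the left and $g$ on the right. The top and bottom maps $A_I^\wedge \to X$ and $B_I^\wedge \to Y$ correspond, via the adjunction, to maps $a\colon A \to X$ and $b\colon B \to Y$ gotten by precomposing with $\gamma_A$ and $\gamma_B$; using naturality of the unit, $f_I^\wedge\circ\gamma_A = \gamma_B\circ f$, one checks that the resulting square with $f$ on the left and $g$ on the right commutes in $\Ch(R)$. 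Since homology and surjectivity of a map of complexes of $I$-adically complete modules are computed in the underlying category $\mod{R}$, the map $g$ regarded in $\Ch(R)$ via $i$ is a surjective quasi-isomorphism (resp. surjection), i.e.\ an acyclic fibration (resp. fibration) in the projective model structure. The defining lifting axioms of Proposition~\ref{prop:proj-model} therefore furnish a lift $\ell\colon B \to X$ with $\ell f = a$ and $g\ell = b$.

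Finally I would transport $\ell$ back: it corresponds under the adjunction to $\widetilde{\ell}\colon B_I^\wedge \to X$, which is a genuine morphism of $\adicR$ because $B_I^\wedge$ is $I$-adically complete by Proposition~\ref{prop:algebraic and analytic}. Precomposing with $\gamma_B$ and invoking the uniqueness clause of Lemma~\ref{lem-adic-universal}, the identities $\widetilde{\ell}\circ\gamma_B = \ell$, $\ell f = a$ and $g\ell = b$ upgrade to $\widetilde{\ell}\circ f_I^\wedge = (\text{top map})$ and $g\circ\widetilde{\ell} = (\text{bottom map})$, so $\widetilde{\ell}$ solves the original problem. The only delicate point is this bookkeeping with the adjunction, namely verifying that both triangle identities for the transported lift follow from naturality of $\gamma$ and the uniqueness of adjoint transposes; the model-categorical input is entirely standard.
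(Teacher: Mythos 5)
Your proof is correct and follows essentially the same route as the paper: the paper's proof also transposes the lifting problem across the adjunction of Lemma~\ref{lem:adic adjunction} and solves it in $\Ch(R)$ using that $f$ is a (acyclic) cofibration against the (acyclic) fibration $g$. Your write-up merely makes explicit the bookkeeping (naturality of $\gamma$ and uniqueness of adjoint transposes) that the paper leaves implicit in the phrase ``it suffices to give a lift $N \to X$ in $\Ch(R)$.''
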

\begin{proof}
We prove (a). The proof of (b) is similar. Suppose that $f\colon M \to N$ is a cofibration in $\Ch(R)$ and let $g\colon X \to Y$ be a surjective quasi-isomorphism in $\adicR$. We must show that there exists a lift as shown in the diagram
\[\begin{tikzcd}
M_I^\wedge \arrow[r] \arrow[d, "f_I^\wedge"'] & X \arrow[d, "g"] \\
N_I^\wedge \arrow[r] \arrow[ur, dashed] & Y.
\end{tikzcd}
\]
Since maps $N_I^\wedge \to X$ are in bijection with $R$-module maps $N \to X$ by Lemma~\ref{lem:adic adjunction}, it suffices to give a lift $N \to X$ in $\Ch(R)$. Such a lift exists since $f$ is a cofibration in $\Ch(R).$
\end{proof}

\begin{lem}\label{lem:factoring preserves completion}
Let $R$ be a commutative ring and $I$ be a weakly pro-regular ideal. Suppose that $M \to N$ is a cofibration in $\Ch(R)$. If $M$ is $I$-adically complete and $H_*N$ is $L_0^I$-complete, then the natural map $N \to N_I^\wedge$ is a quasi-isomorphism.
\end{lem}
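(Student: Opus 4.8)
The plan is to use the cofibration to present $N$ as an extension of a dg-projective complex by the already-complete complex $M$, and to reduce the statement to the dg-projective cokernel, for which levelwise completion computes the total left derived functor. By Proposition~\ref{prop:proj-model} the cofibration $M \to N$ is a degreewise split monomorphism whose cokernel $Q := N/M$ is dg-projective, so there is a degreewise split short exact sequence $0 \to M \to N \to Q \to 0$ of complexes. Since $(-)_I^\wedge$ is additive it preserves degreewise split exact sequences, so applying it levelwise gives a short exact sequence $0 \to M_I^\wedge \to N_I^\wedge \to Q_I^\wedge \to 0$, and the natural completion maps $\gamma$ assemble into a map between these two short exact sequences. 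As $M$ is $I$-adically complete, $\gamma_M \colon M \to M_I^\wedge$ is an isomorphism, so comparing the long exact sequences in homology and applying the five lemma shows that $\gamma_N \colon N \to N_I^\wedge$ is a quasi-isomorphism if and only if $\gamma_Q \colon Q \to Q_I^\wedge$ is.

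It therefore remains to show that $\gamma_Q$ is a quasi-isomorphism. Because $Q$ is dg-projective it is its own dg-projective resolution, so $Q_I^\wedge = \mathbb{L}(Q)_I^\wedge$ and $\gamma_Q = \gamma^{\mathbb{L}}_Q$. By Theorem~\ref{thm:local homology and L-completion} the comparison map $\tau_Q$ is a quasi-isomorphism factoring $\gamma^{\mathbb{L}}_Q$ through $\lambda_Q$, so $\gamma_Q$ is a quasi-isomorphism precisely when $\lambda_Q \colon Q \to \Lambda_I Q$ is, i.e. precisely when $Q$ is derived complete. By Proposition~\ref{prop:implication-completions}(a), this holds if and only if every homology module $H_nQ$ is $L_0^I$-complete.

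The crux, and the step I expect to be the main obstacle, is thus verifying that $H_*Q$ is $L_0^I$-complete. Since $M$ is a complex of $I$-adically complete modules, it is a complex of $L_0^I$-complete modules by Proposition~\ref{prop:adic is separated and L}, so $H_*M$ is $L_0^I$-complete by Corollary~\ref{cor:Lcompletehomology}; by hypothesis $H_*N$ is $L_0^I$-complete as well. Feeding this into the long exact homology sequence of $0 \to M \to N \to Q \to 0$, each $H_nQ$ sits in a short exact sequence $0 \to \mathrm{coker}(H_nM \to H_nN) \to H_nQ \to \ker(H_{n-1}M \to H_{n-1}N) \to 0$. The outer terms are a cokernel and a kernel of maps between $L_0^I$-complete modules, hence are themselves $L_0^I$-complete since that category is abelian with kernels and cokernels computed in $\mod{R}$ (Proposition~\ref{prop:L properties}(a)). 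It then suffices to know that $L_0^I$-complete modules are closed under extensions in $\mod{R}$, which follows from the contramodule description of Section~\ref{sec:contramodules} by applying $\Hom_R(R[1/x_i],-)$ and $\mathrm{Ext}^1_R(R[1/x_i],-)$ to the extension (or directly from the exactness in Proposition~\ref{prop:exact}). Hence each $H_nQ$ is $L_0^I$-complete, so $Q$ is derived complete and $\gamma_Q$, and therefore $\gamma_N$, is a quasi-isomorphism. One could instead avoid the homology bookkeeping entirely: $M$ and $N$ are derived complete by Proposition~\ref{prop:implication-completions}, and $Q$ is the cofiber of $M \to N$ in $\D(R)$, so $Q$ is derived complete because the derived complete complexes are closed under triangles (being the objects on which the idempotent functor $\Lambda_I$ is an equivalence).
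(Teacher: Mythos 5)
Your proof is correct and follows essentially the same route as the paper's: reduce to the dg-projective cokernel $Q$, show its homology is $L_0^I$-complete so that $Q$ is derived complete by Proposition~\ref{prop:implication-completions}(a), use Theorem~\ref{thm:local homology and L-completion} to conclude $Q \to Q_I^\wedge$ is a quasi-isomorphism, and finish with the completed short exact sequence and the five lemma. The only minor differences are that the paper obtains the completed short exact sequence from termwise flatness of the cokernel via Proposition~\ref{prop:homological-prop-adic}(d) rather than from degreewise splitness and additivity as you do, and that you make explicit (via the contramodule description) the closure of $L_0^I$-complete modules under extensions, a point the paper's proof leaves implicit.
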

\begin{proof}
Since $M \to N$ is a cofibration, the cokernel $P$ is dg-projective. Therefore by Lemma~\ref{lem:higher L vanishes}, the long exact sequence of derived functors collapses to a short exact sequence $$0 \to L_0^IM \to L_0^IN \to L_0^IP \to 0.$$ 
Note that $M$ is $L_0^I$-complete by Proposition~\ref{prop:adic is separated and L} and so by Corollary~\ref{cor:Lcompletehomology} it has $L_0^I$-complete homology. From the long exact sequence in homology we deduce that each $H_nP$ is $L_0^I$-complete too, and hence that $P$ is derived complete by Proposition~\ref{prop:implication-completions}(a). By Theorem~\ref{thm:local homology and L-completion}, the natural map $P \to P^{\wedge}_I$ is therefore a quasi-isomorphism. Since $P$ is a complex of flat $R$-modules by Lemma~\ref{lem:dgprojimpliesproj} and $M$ is $I$-adically complete, it follows from Proposition~\ref{prop:homological-prop-adic}(d) that $N \to N_I^\wedge$ is a quasi-isomorphism as claimed. 
\end{proof}

\begin{prop}\label{prop:adic model structure}
Let $R$ be a commutative ring and $I$ be a weakly pro-regular ideal. There is a model structure on $\adicR$ in which the weak equivalences are the quasi-isomorphisms, the fibrations are the surjections, and the cofibrations are the maps which have the left lifting property with respect to the acyclic fibrations. Moreover, this is a symmetric monoidal model category and the adjunction $$
\begin{tikzcd}
\adicR \arrow[r, hookrightarrow, yshift=-1mm] & \Ch(R) \arrow[l, "(-)_I^\wedge"', yshift=1mm]
\end{tikzcd}
$$
is a strong monoidal Quillen adjunction.
\end{prop}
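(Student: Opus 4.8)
The plan is to obtain this model structure as a right transfer, in the sense of Kan's recognition theorem \cite[2.1.19]{Hovey99}, of the projective model structure of Proposition~\ref{prop:proj-model} along the adjunction $(-)_I^\wedge \dashv i$ between $\Ch(R)$ and $\adicR$. As candidate generating cofibrations and generating acyclic cofibrations I would take the images $\mathcal{I} = (-)_I^\wedge(I_{\mathrm{proj}})$ and $\mathcal{J} = (-)_I^\wedge(J_{\mathrm{proj}})$ of the usual generators $I_{\mathrm{proj}}=\{S^{n-1}\to D^n\}$ and $J_{\mathrm{proj}}=\{0\to D^n\}$, where $D^n$ is the contractible disk complex and $S^{n-1}$ the sphere complex $R$ concentrated in a single degree. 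The category $\adicR$ is bicomplete (limits computed underlying, colimits by completing), and the quasi-isomorphisms $W$ satisfy two-out-of-three and are closed under retracts, so it remains to verify the hypotheses of the recognition theorem.

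First I would identify the lifting classes using the adjunction. Since $i$ is fully faithful, a map $g$ in $\adicR$ has the right lifting property against $\mathcal{I}$ (resp.\ $\mathcal{J}$) if and only if its underlying map does against $I_{\mathrm{proj}}$ (resp.\ $J_{\mathrm{proj}}$) in $\Ch(R)$; thus $\mathcal{I}\text{-inj}$ is exactly the surjective quasi-isomorphisms and $\mathcal{J}\text{-inj}$ is exactly the surjections. This immediately yields the compatibility conditions of the recognition theorem: $\mathcal{I}\text{-inj}\subseteq W\cap \mathcal{J}\text{-inj}$, and conversely $W\cap\mathcal{J}\text{-inj}$, the surjective quasi-isomorphisms, equals $\mathcal{I}\text{-inj}$. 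It also shows that the resulting cofibrations $\mathcal{I}\text{-cof}=\mathrm{LLP}(\mathcal{I}\text{-inj})$ are precisely the maps with the left lifting property against the acyclic fibrations, as the statement requires.

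The crux, and the step I expect to be the main obstacle, is the acyclicity condition $\mathcal{J}\text{-cell}\subseteq W\cap\mathcal{I}\text{-cof}$. Containment in $\mathcal{I}\text{-cof}$ is immediate from Lemma~\ref{lem:adic completion preserves lifting properties}(b), since each element of $\mathcal{J}$ lifts against all surjections and hence against acyclic fibrations. For containment in $W$ I would exploit that the domains of $\mathcal{J}$ are all $0$: a relative $\mathcal{J}$-cell complex with source $X_0\in\adicR$ then collapses to a single map $X_0\to (X_0\oplus D)_I^\wedge$, where $D$ is a coproduct of disk complexes (the transfinite tower causes no trouble because the attaching maps carry no data and the nested completions of coproducts collapse, as one checks by reducing modulo $I^n$). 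Now $X_0\to X_0\oplus D$ is an acyclic cofibration in $\Ch(R)$ with $I$-adically complete source and $L_0^I$-complete homology, the latter being the homology of $X_0$, which is $L_0^I$-complete by Proposition~\ref{prop:adic is separated and L} and Corollary~\ref{cor:Lcompletehomology}. Hence Lemma~\ref{lem:factoring preserves completion} makes $X_0\oplus D\to (X_0\oplus D)_I^\wedge$ a quasi-isomorphism, and composing with the quasi-isomorphism $X_0\to X_0\oplus D$ shows the cell complex lies in $W$; this is exactly where weak pro-regularity enters. The remaining technical point is the smallness needed to run the small object argument, but the domains of $\mathcal{I}$ and $\mathcal{J}$ are completions of finitely generated free complexes (and $0$), which are small in $\adicR$ by the same cardinality argument as in our treatment of $L_0^I$-complete modules \cite{PolWilliamson}.

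Finally, for the monoidal statement I would first deduce that $(-)_I^\wedge$ is left Quillen directly from Lemma~\ref{lem:adic completion preserves lifting properties}, part (a) sending cofibrations to cofibrations and part (b) sending acyclic cofibrations to acyclic cofibrations. Since the tensor product on $\adicR$ is $(-\otimes_R-)_I^\wedge$ and $(-)_I^\wedge$ is strong symmetric monoidal by Proposition~\ref{prop:adic-sym-mon} (hence preserves pushouts and tensors), the pushout-product of two completed generating cofibrations is the completion of their pushout-product in $\Ch(R)$; the pushout-product axiom in the symmetric monoidal model category $\Ch(R)$ together with the left Quillen property of $(-)_I^\wedge$ then gives the pushout-product axiom in $\adicR$ (it suffices to check on generators, \cite[4.2.5]{Hovey99}). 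The unit $R_I^\wedge=(-)_I^\wedge(R)$ is cofibrant as the image of the cofibrant unit $R$ under a left Quillen functor, so the unit axiom is automatic and $\adicR$ is a symmetric monoidal model category. The strong symmetric monoidal left Quillen functor $(-)_I^\wedge$, with lax monoidal right adjoint $i$, is then a strong monoidal Quillen adjunction by definition.
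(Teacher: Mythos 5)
Your proof is correct, but it takes a genuinely different route from the paper's. The paper never invokes cofibrant generation: it verifies the factorization axiom by hand, factoring the underlying map in $\Ch(R)$ as a cofibration followed by an acyclic fibration and then completing the intermediate object --- Lemma~\ref{lem:adic completion preserves lifting properties} shows the first leg has the required lifting property, while Corollary~\ref{cor:Lcompletehomology} and Lemma~\ref{lem:factoring preserves completion} show that completing the intermediate object does not change its quasi-isomorphism type, so the second leg remains an acyclic fibration; the lifting axiom then follows by the retract argument, and the monoidal axiom is checked in its pullback-hom form, exploiting that fibrations, weak equivalences, limits and the internal hom of $\adicR$ are all computed in $\Ch(R)$. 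You instead transfer the projective model structure through the recognition theorem \cite[2.1.19]{Hovey99} with generating sets $(-)_I^\wedge(I_{\mathrm{proj}})$ and $(-)_I^\wedge(J_{\mathrm{proj}})$. The crux is the same in both arguments: your acyclicity verification for relative $\mathcal{J}$-cell complexes is the paper's factorization step in disguise, resting on the same two lemmas with weak pro-regularity entering at the same point, and your collapsing of nested completions is legitimate because $I$ is finitely generated, so $M_I^\wedge/I^nM_I^\wedge\cong M/I^nM$ and the reduction mod $I^n$ argument works. The one obligation your route has that the paper's avoids is smallness, and this is the only place you wave your hands; it deserves care because colimits in $\adicR$ are exotic. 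Indeed the domains of $\mathcal{I}$ are \emph{not} $\omega$-small: an $\omega$-sequence of cell attachments along zero maps already produces $(\bigoplus_\omega R_I^\wedge)_I^\wedge$, which receives maps from $R_I^\wedge$ factoring through no finite stage. They are, however, $\aleph_1$-small: completion is the countable inverse limit $\lim_n(-\otimes_R R/I^n)$ and hence commutes with $\aleph_1$-filtered colimits, so such colimits of complete complexes are computed in $\Ch(R)$, where $\Hom_{\adicR}((S^{n-1})_I^\wedge,-)\cong Z_{n-1}(-)$ commutes with them. With that supplied, your argument goes through, and it buys something the paper's proof does not: the model structure is exhibited as cofibrantly generated, which is exactly what licenses your check of the pushout-product axiom on generators via $f_I^\wedge\,\square\, g_I^\wedge\cong(f\,\square\, g)_I^\wedge$ and is useful for further transfer or localization arguments. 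Conversely, the paper's direct verification is shorter precisely because it sidesteps all cardinality questions.
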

\begin{proof}
The only axioms which require elaboration are the factorization axiom and the lifting axiom. For the factorization axiom we must show that any map in $\adicR$ can be factored in two ways:
\begin{enumerate}
\item into a map with the left lifting property with respect to acyclic fibrations followed by an acyclic fibration;
\item into a map with the left lifting property with respect to fibrations followed by a fibration.
\end{enumerate}
We prove the existence of factorization (1). The proof of the second is similar. 

Given a map $X \to Y$ in $\adicR$, one can factor $X \to Y$ into a cofibration $\alpha\colon X \to Z$ followed by an acyclic fibration $\beta\colon Z \to Y$ in $\Ch(R)$. By Lemma~\ref{lem:adic adjunction}, the map $\beta$ corresponds to a map $\nu\colon Z_I^\wedge \to Y$  which is still surjective and hence a fibration. It remains to show that $\nu$ is a quasi-isomorphism. Since $Z \to Y$ is a quasi-isomorphism and $Y$ is $I$-adically complete, $Z$ has $L_0^I$-complete homology by Corollary~\ref{cor:Lcompletehomology}. Therefore by Lemma~\ref{lem:factoring preserves completion}, $Z \to Z_I^\wedge$ is a quasi-isomorphism, and $\nu$ is a quasi-isomorphism by the 2-out-of-3 property. One sees that the factorization \[X \xrightarrow{\gamma\alpha} Z_I^\wedge \xrightarrow{\nu} Y\] is of the required form by Lemma~\ref{lem:adic completion preserves lifting properties}.

It remains to prove the lifting axiom. It is immediate from the definition that the cofibrations have the left lifting property with respect to the acyclic fibrations, so it remains to show that the acyclic cofibrations have the left lifting property with respect to the fibrations. This follows from the retract argument as follows. Let $f\colon A \to B$ be an acyclic cofibration and $g\colon M \to N$ be a fibration. By the factorizations proved above and the 2-out-of-3 property of weak equivalences, we can factor $f$ into a map $i\colon A \to C$ with the left lifting property with respect to fibrations followed by an acyclic fibration $p\colon C \to B$. By the lifting property of $f$ against $p$, we obtain a map $h\colon B \to C$. Since $i$ has the left lifting property with respect to fibrations, we obtain a map $h'\colon C \to M$. The composite $h'\circ h$ gives the required lift, and this completes the proof of the existence of the model structure.

Now that we have proved the existence of the model structure, it is clear that the adjunction in the proposition is Quillen, since the right adjoint preserves both the weak equivalences and fibrations by definition.

We now verify that this is a symmetric monoidal model category. Let $f\colon M \to N$ and $g\colon X \to Y$ be fibrations in $\adicR$. We must show that the pullback product map $$\mathrm{Hom}_R(N,X) \to \mathrm{Hom}_R(M,X) \times_{\mathrm{Hom}_R(M,Y)} \mathrm{Hom}_R(N,Y)$$ is a fibration which is acyclic if either $f$ or $g$ is. This holds in $\Ch(R)$ by Proposition~\ref{prop:proj-model} and so in $\adicR$ too, as the weak equivalences and fibrations are the same. 
Also note that $R^\wedge_I$ is cofibrant in $\adicR$ since $R$ is so in $\Ch(R)$ and the $I$-adic completion functor is left Quillen. The unit axiom of 
$\adicR$ immediately follows from this observation. It is only left to note that $(-)_I^\wedge\colon \Ch(R) \to \adicR$ is strong monoidal by Proposition~\ref{prop:adic-sym-mon}, so the adjunction is a strong monoidal Quillen adjunction as claimed.
\end{proof}

\subsection{\texorpdfstring{$L_0^I$}{L}-completion}
The category of complexes of $L_0^I$-complete modules has a projective model structure analogous to that of $I$-adically complete modules described in Proposition~\ref{prop:adic model structure}.
\begin{prop}[{\cite[6.4,~6.6]{PolWilliamson}}]
Let $R$ be a commutative ring and $I$ be a weakly pro-regular ideal. There is a symmetric monoidal model structure on $\LR$ in which the weak equivalences are the quasi-isomorphisms and the fibrations are the surjections.
\end{prop}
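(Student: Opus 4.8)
The plan is to mirror the construction of the model structure on $\adicR$ given in Proposition~\ref{prop:adic model structure}, replacing the $I$-adic completion functor by $L_0^I$ throughout. The basic setup is the levelwise extension of the adjunction of Proposition~\ref{prop:L properties}(c), with left adjoint $L_0^I \colon \Ch(R) \to \LR$ and right adjoint the inclusion. I declare a map in $\LR$ to be a weak equivalence (resp.\ fibration) if it is a quasi-isomorphism (resp.\ degreewise surjection), and define cofibrations by the left lifting property against acyclic fibrations. The two-out-of-three and retract axioms are immediate, so the work concentrates in the factorization and lifting axioms, for which I need $L_0^I$-analogues of Lemma~\ref{lem:adic completion preserves lifting properties} and Lemma~\ref{lem:factoring preserves completion}.

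First I would record the lifting lemma: $L_0^I$ sends (acyclic) cofibrations of $\Ch(R)$ to maps of $\LR$ having the left lifting property against surjective quasi-isomorphisms (resp.\ surjections). This is proved exactly as Lemma~\ref{lem:adic completion preserves lifting properties}: a lifting problem against $g\colon X \to Y$ in $\LR$ corresponds under the adjunction to a lifting problem in $\Ch(R)$ against the same underlying map, which is solvable because the map being completed is a cofibration there.

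The technical heart, and the step I expect to require the most care, is the $L_0^I$-analogue of Lemma~\ref{lem:factoring preserves completion}: if $M \to N$ is a cofibration in $\Ch(R)$ with $M$ in $\LR$ and $H_*N$ being $L_0^I$-complete, then $N \to L_0^I N$ is a quasi-isomorphism. The cokernel $P$ is dg-projective and the sequence $0 \to M \to N \to P \to 0$ is degreewise split; applying $L_0^I$ degreewise (and using that the higher $L_i^I$ vanish on the degreewise projective $P$ by Lemma~\ref{lem:higher L vanishes}) gives a short exact sequence of complexes, which I compare with the original via the completion maps. Since $M \to L_0^I M$ is an isomorphism, a five-lemma argument in homology reduces the claim to showing $P \to L_0^I P$ is a quasi-isomorphism. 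Here lies the key simplification that makes the argument cleaner than the adic case: $P$ is degreewise projective by Lemma~\ref{lem:dgprojimpliesproj}, and for a projective module $Q$ one has $L_0^I Q = Q_I^\wedge$ since $Q$ is its own projective resolution. Hence $L_0^I P \cong P_I^\wedge = \mathbb{L}(P)_I^\wedge$ as complexes. The long exact sequence in homology, together with closure of $L_0^I$-complete modules under kernels, cokernels and extensions (Proposition~\ref{prop:exact}), shows $H_*P$ is $L_0^I$-complete, so $P$ is derived complete by Proposition~\ref{prop:implication-completions}(a); combined with Theorem~\ref{thm:local homology and L-completion} this says precisely that $P \to \mathbb{L}(P)_I^\wedge \cong L_0^I P$ is a quasi-isomorphism.

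With these two lemmas in place, the factorizations and the lifting axiom follow essentially verbatim from the proof of Proposition~\ref{prop:adic model structure}: one factors a map of $\LR$ in $\Ch(R)$ as a cofibration followed by an acyclic fibration, applies $L_0^I$, and uses the completion lemma (noting $H_*$ of the middle term is $L_0^I$-complete by Corollary~\ref{cor:Lcompletehomology}) to see the resulting map is still an acyclic fibration while the other factor has the required lifting property; the second factorization and the retract argument for acyclic cofibrations are identical. Finally, for the symmetric monoidal structure I would observe that the internal hom of $\LR$ is the $R$-linear $\Hom$ (Proposition~\ref{prop:L properties}(b)), so the pullback--hom of two fibrations is computed as in $\Ch(R)$ and is therefore a fibration, acyclic if either input is, by Proposition~\ref{prop:proj-model}. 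The unit $L_0^I R$ is cofibrant because $R$ is cofibrant in $\Ch(R)$ and $L_0^I$ is left Quillen, which gives the unit axiom, and $L_0^I$ is strong symmetric monoidal by Proposition~\ref{prop:L properties}(c), so the adjunction is a strong monoidal Quillen adjunction.
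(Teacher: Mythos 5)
Your overall strategy is exactly the intended one: the paper itself only cites \cite{PolWilliamson} for this proposition, and its proof of the adic analogue (Proposition~\ref{prop:adic model structure}) is explicitly modelled on that argument, so transporting Lemmas~\ref{lem:adic completion preserves lifting properties} and~\ref{lem:factoring preserves completion} to the $L_0^I$-setting reconstructs the intended proof. Your two supporting lemmas, the factorizations, and the retract argument are correct, and you rightly identify the simplification in the key lemma: since the cokernel $P$ is degreewise projective, $L_0^IP \cong P_I^\wedge = \mathbb{L}(P)_I^\wedge$ compatibly with the maps out of $P$, so derived completeness of $P$ together with Theorem~\ref{thm:local homology and L-completion} gives that $P \to L_0^IP$ is a quasi-isomorphism, and the five lemma then replaces the appeal to Proposition~\ref{prop:homological-prop-adic}(d) needed in the adic case. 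Two citations should be repaired, though both facts are true: the vanishing $L_i^I P_n = 0$ for $i \geq 1$ is elementary (a projective module is its own resolution) rather than Lemma~\ref{lem:higher L vanishes}; and closure of $L_0^I$-complete modules under extensions is not literally contained in Proposition~\ref{prop:exact} (an abelian subcategory with exact inclusion need not be extension-closed). The cleanest route, all within the paper's toolkit, is to bypass $H_*P$ entirely: $M$ is derived complete by Proposition~\ref{prop:implication-completions}(c), $N$ is derived complete by Proposition~\ref{prop:implication-completions}(a), and $P$ is the cofibre of $M \to N$ in $\D(R)$, so $P$ is derived complete since derived complete complexes are closed under completing triangles.

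The one step that fails as written is the pushout-product axiom. It is not true in $\Ch(R)$ that the pullback-hom of two fibrations is a fibration: over $\Z$, take both maps to be $\Z \to \Z/2$ concentrated in degree $0$; then $\Hom_{\Z}(\Z/2,\Z) = 0$ while the pullback contains the compatible pair $(\mathrm{id}_{\Z},\mathrm{id}_{\Z/2})$, so the comparison map is not surjective. Proposition~\ref{prop:proj-model} gives the lifting-form of the monoidal axiom for a \emph{cofibration} paired against a fibration, not for two fibrations (the paper's own proof of Proposition~\ref{prop:adic model structure} contains the same misstatement, which you have inherited), and the correct form cannot be checked ``as in $\Ch(R)$'': cofibrations of $\LR$ are defined by a lifting property and need not be cofibrations of $\Ch(R)$, so agreement of weak equivalences, fibrations and internal homs does not dispose of it. The fix is available from what you have already proved. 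Your factorizations plus the retract argument show that every (acyclic) cofibration of $\LR$ is a retract of $L_0^I(i)$ for some (acyclic) cofibration $i$ of $\Ch(R)$. For such maps, strong monoidality of $L_0^I$ and the fact that it preserves pushouts give $L_0^I(i) \mathbin{\square} L_0^I(j) \cong L_0^I(i \mathbin{\square} j)$, where $i \mathbin{\square} j$ is an (acyclic) cofibration of $\Ch(R)$ by Proposition~\ref{prop:proj-model}; your lifting lemma then shows $L_0^I(i \mathbin{\square} j)$ is an (acyclic) cofibration of $\LR$, and the general case follows since (acyclic) cofibrations are closed under retracts and $\mathbin{\square}$ preserves retracts. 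With this replacement, the remainder of your monoidal discussion (the unit axiom and strong monoidality of $L_0^I$ from Proposition~\ref{prop:L properties}(c)) goes through.
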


\subsection{Derived completion}
In this section we follow Greenlees-May~\cite{GreenleesMay95b} in putting a model structure on derived complete modules. This is given by a Bousfield localization of the category $\Ch(R)$.

\begin{defin}
Let $K(I)$ be the unstable Koszul complex. We say that a map $f\colon M \to N$ of complexes is an \emph{$K(I)$-equivalence} if the map $K(I) \otimes_R^\mathbb{L} f\colon K(I) \otimes_R^\mathbb{L} M \to K(I) \otimes_R^\mathbb{L} N$ is a quasi-isomorphism. The homological localization $L_{K(I)}\Ch(R)$ is the model structure on $\Ch(R)$ in which the weak equivalences are the $K(I)$-equivalences and the cofibrations are the cofibrations in $\Ch(R)$ (i.e., the monomorphisms which are linearly split and have dg-projective cokernel). This model structure exists by a similar argument to~\cite[VIII.1.1]{EKMM} and it is moreover a symmetric monoidal model category, see~\cite[3.11]{PolWilliamson} for instance. 
\end{defin}

The following result relates this model category to the category of derived complete complexes.

\begin{lem}\label{lem:derived completion imples K equivalence}
Let $R$ be a commutative ring and $I$ be a finitely generated ideal. A map $f\colon M \to N$ is a $K(I)$-equivalence if and only if $\Lambda_I(f)\colon \Lambda_I M \to \Lambda_I N$ is a quasi-isomorphism. Furthermore, the homotopy category of $L_{K(I)}\Ch(R)$ may be identified with the full subcategory of $\D(R)$ consisting of the derived complete complexes.
\end{lem}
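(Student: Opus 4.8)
The plan is to prove the lemma in two halves. The first half characterizes $K(I)$-equivalences via the derived completion, and the second half identifies the homotopy category of $L_{K(I)}\Ch(R)$ with the derived complete complexes in $\D(R)$.

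For the first half, I would start from the definition: $f\colon M \to N$ is a $K(I)$-equivalence precisely when $K(I) \Lotimes_R f$ is a quasi-isomorphism, equivalently when $K(I) \Lotimes_R C$ is acyclic for $C$ the cofibre of $f$. The key observation is that $\Lambda_I(f)$ is a quasi-isomorphism if and only if $\Lambda_I C = \RHom_R(K_\infty(I), C) \simeq 0$. So the lemma reduces to the claim that $K(I) \Lotimes_R C \simeq 0$ if and only if $\RHom_R(K_\infty(I), C) \simeq 0$. But this is exactly the content of Proposition~\ref{prop-equivalent}: conditions (a) and (e) there are equivalent (both equivalent to $R/I \Lotimes_R C \simeq 0$). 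Thus the first sentence of the lemma follows immediately by applying Proposition~\ref{prop-equivalent} to the cofibre of $f$, once I note that the derived completion functor sends a quasi-isomorphism of the map to the vanishing of the derived completion of the cofibre (using that $\RHom_R(K_\infty(I),-)$ is exact and hence sends cofibre sequences to cofibre sequences).

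For the second half, I would use the general theory of Bousfield localization. The fibrant objects in the homological localization $L_{K(I)}\Ch(R)$ are the $K(I)$-local objects, that is, the complexes $W$ such that every $K(I)$-equivalence $f$ induces a bijection $[\,-, W]$ on morphisms in the homotopy category. The standard identification in such a localization is that the homotopy category of $L_{K(I)}\Ch(R)$ is equivalent to the full subcategory of $\D(R)$ spanned by the $K(I)$-local objects, the equivalence being realized by fibrant replacement. The main task is therefore to show that the $K(I)$-local objects are precisely the derived complete complexes. Since the localization of $\Ch(R)$ at the $K(I)$-equivalences is a smashing-type localization built from the idempotent $K_\infty(I)$, the natural candidate for the $K(I)$-localization functor is $\Lambda_I = \RHom_R(K_\infty(I), -)$, and I would check directly that $\lambda_M\colon M \to \Lambda_I M$ is a $K(I)$-equivalence with $K(I)$-local target; local objects are then exactly the $M$ for which $\lambda_M$ is a quasi-isomorphism, i.e.\ the derived complete complexes.

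The expected main obstacle is the second half: establishing cleanly that $\Lambda_I$ is the Bousfield localization functor and that its essential image consists exactly of the local objects. Concretely I must verify that $\lambda_M$ is always a $K(I)$-equivalence and that $\Lambda_I M$ is always $K(I)$-local, which amounts to showing $\RHom_R(C, \Lambda_I M) \simeq 0$ whenever $K(I) \Lotimes_R C \simeq 0$; by adjunction this reduces to an acyclicity statement of the form $\RHom_R(K_\infty(I) \Lotimes_R C, M) \simeq 0$, which follows since $K_\infty(I) \Lotimes_R C$ is built from $K(I) \Lotimes_R C \simeq 0$ via Lemma~\ref{lem-koszul-build} and Proposition~\ref{prop-equivalent}. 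Once these two properties are in hand, the identification of local objects with derived complete complexes is formal, and the equivalence of homotopy categories follows from the general fact that fibrant replacement induces an equivalence between the homotopy category of a left Bousfield localization and the full subcategory of local objects.
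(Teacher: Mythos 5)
Your treatment of the first claim is exactly the paper's: pass to the cofibre $C$ of $f$, note $\Lambda_I(f)$ is a quasi-isomorphism precisely when $\Lambda_I C \simeq 0$, and apply Proposition~\ref{prop-equivalent} (the equivalence of (a) and (e)). For the second claim the paper does not argue at all: it cites~\cite[4.2]{GreenleesMay95b}, where it is shown that $\Lambda_I$ is a fibrant replacement functor in $L_{K(I)}\Ch(R)$, and the identification of the homotopy category with the local ($=$ derived complete) objects then follows from standard Bousfield localization theory. What you propose is to reprove that citation, which is a legitimate and more self-contained route, and your verification that $\Lambda_I M$ is always $K(I)$-local is complete and correct: by adjunction $\RHom_R(C,\RHom_R(K_\infty(I),M))\simeq \RHom_R(K_\infty(I)\Lotimes_R C,M)$, and $K_\infty(I)\Lotimes_R C\simeq 0$ whenever $K(I)\Lotimes_R C\simeq 0$ by Proposition~\ref{prop-equivalent}.

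There is, however, a gap in the other half of your program: the verification that $\lambda_M\colon M\to\Lambda_I M$ is always a $K(I)$-equivalence is named as a task but never argued --- the reduction you give (``which amounts to showing $\RHom_R(C,\Lambda_IM)\simeq 0$\ldots'') only treats locality of the target. This check is genuinely needed: without it you get neither that $\Lambda_I$ realizes a localization of every object, nor the implication ``$K(I)$-local $\Rightarrow$ derived complete'' (your argument only yields the converse). It can be filled with the paper's tools. The fibre of $\lambda_M$ is $\RHom_R(C_\infty,M)$, where $C_\infty$ denotes the cofibre of the augmentation $K_\infty(I)\to R$. Since $C_\infty$ is a bounded complex whose terms are finite sums of localizations $R[1/(x_{i_1}\cdots x_{i_k})]$ with $k\geq 1$, and $K(x_i)\otimes_R N$ is acyclic whenever $x_i$ acts invertibly on $N$, one gets $K(I)\Lotimes_R C_\infty\simeq 0$. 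Hence $\RHom_R\bigl(K(I),\RHom_R(C_\infty,M)\bigr)\simeq\RHom_R(K(I)\Lotimes_R C_\infty,M)\simeq 0$, and Proposition~\ref{prop-equivalent} ((d)$\Rightarrow$(a)) shows the fibre of $\lambda_M$ is $K(I)$-acyclic, as required. (This vanishing is equivalent to the idempotence $K_\infty(I)\Lotimes_R K_\infty(I)\simeq K_\infty(I)$ that your phrase ``the idempotent $K_\infty(I)$'' presupposes; it must be proved, not assumed.) A terminological caveat: the completion localization is not ``smashing-type'' --- it is the torsion functor $K_\infty(I)\Lotimes_R-$ that is smashing --- though nothing in your argument actually uses this.
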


\begin{proof}
Write $C$ for the cofibre of $M \to N$. By Proposition~\ref{prop-equivalent}, $\RHom_R(K_\infty(I), C) \simeq 0$ if and only if $K(I) \Lotimes_R C \simeq 0$ from which the first claim follows. In~\cite[4.2]{GreenleesMay95b} it is shown that $\Lambda_I$ is a fibrant replacement in $L_{K(I)}\Ch(R)$ which implies the second claim.
\end{proof}

We end this section by relating this model structure to the model structure on $I$-adically complete $R$-modules. 
\begin{prop}\label{prop:QA}
Let $R$ be a commutative ring and $I$ be a weakly pro-regular ideal. The adjunction
$$
\begin{tikzcd}
\adicR \arrow[r, hookrightarrow, yshift=-1mm] & L_{K(I)}\Ch(R) \arrow[l, "(-)_I^\wedge"', yshift=1mm]
\end{tikzcd}
$$
is a strong monoidal Quillen adjunction.
\end{prop}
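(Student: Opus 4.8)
The plan is to show that the left adjoint $(-)_I^\wedge \colon L_{K(I)}\Ch(R) \to \adicR$ is left Quillen by verifying that it preserves cofibrations and acyclic cofibrations; the monoidal statement will then follow almost formally from what has already been established. First, the cofibrations of $L_{K(I)}\Ch(R)$ are by definition exactly the cofibrations of $\Ch(R)$, while the acyclic fibrations of $\adicR$ are precisely the surjective quasi-isomorphisms (fibrations are surjections and weak equivalences are quasi-isomorphisms). Since cofibrations in $\adicR$ are defined by the left lifting property against acyclic fibrations, Lemma~\ref{lem:adic completion preserves lifting properties}(a) says exactly that $(-)_I^\wedge$ carries cofibrations of $\Ch(R)$ to cofibrations of $\adicR$.

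The real content, and the step I expect to be the main obstacle, is preservation of acyclic cofibrations. The subtlety is that $L_{K(I)}\Ch(R)$ has strictly more weak equivalences than $\Ch(R)$, so this does not follow from Lemma~\ref{lem:adic completion preserves lifting properties}(b) alone: I must show that a map $f\colon M \to N$ which is both a cofibration in $\Ch(R)$ and a $K(I)$-equivalence becomes a quasi-isomorphism after completion. Since $f$ is a cofibration, its cokernel $P$ is dg-projective and $0 \to M \to N \to P \to 0$ is a degreewise split short exact sequence, giving a triangle in $\D(R)$. By Lemma~\ref{lem:derived completion imples K equivalence}, $f$ is a $K(I)$-equivalence if and only if $\Lambda_I P \simeq 0$. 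Because $P$ is dg-projective, it is its own dg-projective resolution, so Theorem~\ref{thm:local homology and L-completion} identifies $\Lambda_I P \simeq \mathbb{L}(P)_I^\wedge \simeq P_I^\wedge$. Hence the $K(I)$-equivalence hypothesis is equivalent to $P_I^\wedge$ being acyclic.

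To finish, I would complete the short exact sequence. Each term $P_n$ is projective by Lemma~\ref{lem:dgprojimpliesproj} and therefore flat, so applying Proposition~\ref{prop:homological-prop-adic}(d) in each degree produces a short exact sequence of complexes $0 \to M_I^\wedge \to N_I^\wedge \to P_I^\wedge \to 0$. Its long exact sequence in homology shows that $f_I^\wedge$ is a quasi-isomorphism exactly when $P_I^\wedge$ is acyclic, which is precisely the condition just obtained. Thus $f_I^\wedge$ is a quasi-isomorphism, and combined with the first paragraph it is an acyclic cofibration in $\adicR$; this establishes that $(-)_I^\wedge$ is left Quillen.

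Finally, for the monoidal assertion, the functor $(-)_I^\wedge$ is strong symmetric monoidal by Proposition~\ref{prop:adic-sym-mon}. Since $L_{K(I)}\Ch(R)$ carries the same underlying symmetric monoidal structure and the same cofibrations as $\Ch(R)$, and the unit $R$ is cofibrant, the pushout-product and unit conditions are inherited verbatim from the verification in Proposition~\ref{prop:adic model structure}. Therefore the adjunction is a strong monoidal Quillen adjunction, as claimed.
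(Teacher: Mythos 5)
Your proof is correct and takes essentially the same route as the paper: cofibration preservation comes from the fact that localization does not change cofibrations together with Lemma~\ref{lem:adic completion preserves lifting properties}(a), and for acyclic cofibrations both arguments pass to the dg-projective cokernel $P$, deduce $P_I^\wedge \simeq 0$ from the $K(I)$-equivalence hypothesis via Theorem~\ref{thm:local homology and L-completion}, and conclude with Proposition~\ref{prop:homological-prop-adic}(d) applied degreewise. The only cosmetic difference is that you reach $\Lambda_I P \simeq 0$ through Lemma~\ref{lem:derived completion imples K equivalence} and the cofibre sequence, whereas the paper applies Proposition~\ref{prop-equivalent} directly to $K(I) \otimes_R^{\mathbb{L}} P$.
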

\begin{proof}
Firstly, we proved in Proposition~\ref{prop:adic model structure} that \[
 \begin{tikzcd}
 \adicR \arrow[r, shift right,  "i"'] &  
 \Ch(R) \arrow[l,shift right, "(-)_I^\wedge"'] \end{tikzcd}\] is a strong monoidal Quillen adjunction. Since the cofibrations in the homological localization are the same as the cofibrations in the original model structure, $(-)_I^\wedge$ preserves cofibrations as a functor $L_{K(I)}\Ch(R) \to \adicR.$ 
 
It remains to show that this functor preserves acyclic cofibrations. Let $f\colon X \to Y$ be a cofibration in $\Ch(R)$ which is also a $K(I)$-equivalence. 
In particular this means that it has dg-projective cokernel $P$ and that $K(I) \otimes_R^\mathbb{L} P$ is acyclic. By Proposition~\ref{prop-equivalent}, we deduce that 
$\Lambda_IP \simeq 0$, and hence that $P_I^\wedge \simeq 0$ by Theorem~\ref{thm:local homology and L-completion}. As $P$ is dg-projective and hence termwise flat by Lemma~\ref{lem:dgprojimpliesproj}, Proposition~\ref{prop:homological-prop-adic}(d) shows that $X_I^\wedge \to Y_I^\wedge \to P_I^\wedge$ is short exact and therefore $f_I^\wedge\colon X_I^\wedge \to Y_I^\wedge$ is a quasi-isomorphism. This concludes the proof of the fact that the adjunction is a strong monoidal Quillen adjunction. 
\end{proof}

\section{The Quillen equivalences}\label{sec:QEs}
The goal of this section is to prove that the homotopy theories of $I$-adically complete, $L_0^I$-complete and derived complete complexes are all symmetric monoidally equivalent.

\begin{thm}\label{thm:cts QE}
Let $R$ be a commutative ring and $I$ be a weakly pro-regular ideal. There is a strong monoidal Quillen equivalence 
\[
 \begin{tikzcd}
 \adicR \arrow[r, shift right,  "i"'] &  
 L_{K(I)}\Ch(R). \arrow[l,shift right, "(-)_I^\wedge"'] \end{tikzcd}\]
\end{thm}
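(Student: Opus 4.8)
The plan is to upgrade the strong monoidal Quillen adjunction of Proposition~\ref{prop:QA} to a Quillen equivalence. Since we already know the adjunction $(-)_I^\wedge \dashv i$ between $\adicR$ and $L_{K(I)}\Ch(R)$ is strong monoidal and Quillen, it remains only to verify the equivalence criterion. I would use the standard criterion: a Quillen adjunction is a Quillen equivalence if and only if the right adjoint reflects weak equivalences between fibrant objects and, for every cofibrant object $X$ in the source, the derived unit $X \to i((\mathbb{L}(-)_I^\wedge X)^{\mathrm{fib}})$ is a weak equivalence. Because the right adjoint here is the inclusion $i$, and the weak equivalences in $\adicR$ are the quasi-isomorphisms while those in $L_{K(I)}\Ch(R)$ are the $K(I)$-equivalences, the key points to check are (i) that $i$ detects/reflects the relevant equivalences on fibrant objects, and (ii) that the derived unit is a $K(I)$-equivalence for cofibrant $X \in L_{K(I)}\Ch(R)$.

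I would carry out the steps as follows. First I would identify fibrant and cofibrant objects in both categories: cofibrant objects in $L_{K(I)}\Ch(R)$ are the dg-projective complexes (the cofibrations agree with those in $\Ch(R)$), and by Lemma~\ref{lem:derived completion imples K equivalence} a fibrant replacement in $L_{K(I)}\Ch(R)$ is computed by $\Lambda_I$. Second, for the derived unit: given a cofibrant (hence dg-projective) $X \in L_{K(I)}\Ch(R)$, its image $X_I^\wedge$ in $\adicR$ computes $\mathbb{L}(X)_I^\wedge$, and I would show the unit map $X \to i(X_I^\wedge)$ is a $K(I)$-equivalence. By Lemma~\ref{lem:derived completion imples K equivalence} this is equivalent to $\Lambda_I(X) \to \Lambda_I(X_I^\wedge)$ being a quasi-isomorphism; here I would invoke Theorem~\ref{thm:local homology and L-completion}, which identifies $\Lambda_I$ with $\mathbb{L}(-)_I^\wedge$ and shows $X \to X_I^\wedge$ is already a model for the derived completion on dg-projectives, so applying $\Lambda_I$ gives an isomorphism on homology (idempotency of derived completion). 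Third, for the counit / detection condition: given a fibrant object $Y \in \adicR$ (all objects are fibrant, being surjection-fibrant), I would check that $Y$ is already derived complete and that the comparison exhibits $Y_I^\wedge \simeq Y$ via Lemma~\ref{lem:factoring preserves completion} and Proposition~\ref{prop:implication-completions}, so that $i$ reflects quasi-isomorphisms among such objects.

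The main obstacle, and the heart of the argument, is verifying that the derived unit is a $K(I)$-equivalence and that the adjunction does not lose information when passing between the localized category $L_{K(I)}\Ch(R)$ and $\adicR$. Concretely, the delicate point is that the weak equivalences differ on the two sides ($K(I)$-equivalences versus honest quasi-isomorphisms), so the equivalence hinges on the fact, supplied by Theorem~\ref{thm:local homology and L-completion} together with Proposition~\ref{prop:implication-completions}, that on the homotopy category of derived complete complexes the $I$-adic completion $(-)_I^\wedge$ agrees with the derived completion $\Lambda_I$ up to quasi-isomorphism. I expect the cleanest route is to reduce both the unit and counit conditions to statements about $\Lambda_I$ via Lemma~\ref{lem:derived completion imples K equivalence}, at which point weak pro-regularity (through Theorem~\ref{thm:local homology and L-completion}) does the real work; the monoidal part of the statement then requires no further argument, as strong monoidality was already established in Proposition~\ref{prop:QA}.
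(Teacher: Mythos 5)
Your proposal is correct and follows essentially the same route as the paper: both start from the strong monoidal Quillen adjunction of Proposition~\ref{prop:QA}, reduce all $K(I)$-equivalence statements to statements about $\Lambda_I$ via Lemma~\ref{lem:derived completion imples K equivalence}, and then let Theorem~\ref{thm:local homology and L-completion} together with Proposition~\ref{prop:implication-completions}(c) (adically complete complexes are derived complete) carry the argument, exactly as in the paper's treatment of the derived unit. The only cosmetic difference is that you check the second half of the equivalence criterion by showing the inclusion $i$ reflects weak equivalences between fibrant objects (Hovey's criterion), whereas the paper checks the derived counit on a dg-projective resolution $Q \to N$; both verifications rest on the same two facts, so this is a restatement rather than a different proof (and your citation of Lemma~\ref{lem:factoring preserves completion} there is unnecessary—derived completeness of $Y$ and $Y'$ plus naturality of $\lambda$ suffices).
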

\begin{proof}
We proved that this is a strong monoidal Quillen adjunction in Proposition~\ref{prop:QA}. To show that it is a Quillen equivalence, it suffices to check that the derived unit and counit are weak equivalences in the respective model categories.

Let $M$ be a complex and choose a dg-projective resolution $P$ of $M$. Then the derived unit on $M$ is given by $P \to P_I^\wedge$ and we must check that this is a $K(I)$-equivalence. By Lemma~\ref{lem:derived completion imples K equivalence}, this is equivalent to checking that $\Lambda_I(P) \to \Lambda_I(P_I^\wedge)$ is an equivalence. Consider the commutative diagram 
\[\begin{tikzcd}
\Lambda_IP \arrow[dd, "\Lambda_I(\gamma_P)"'] \arrow[rr, "\tau_P", "\sim"'] & &  P_I^\wedge \arrow[dd, "\gamma_{P_I^\wedge}^\mathbb{L}"'] \arrow[dr, "\gamma_{P^\wedge_I}"] & \\
& & & \Lambda_I(P_I^\wedge) \arrow[dl, "\tau_{P^\wedge_I}", "\sim"']  \\
\Lambda_I(P_I^\wedge) \arrow[rr, "\tau_{P_I^\wedge}"', "\sim"] & & \mathbb{L}(P_I^\wedge)_I^\wedge 
\end{tikzcd} \] 
obtained from Theorem~\ref{thm:local homology and L-completion}. The same theorem 
tells us that the all the maps denoted by $\tau$ are quasi-isomorphisms. 
By Proposition~\ref{prop:implication-completions}(c) we know that $\gamma_{P^\wedge_I}$ is a quasi-isomorphism. The commutativity of the diagram and the 
$2$-out-of-$3$ property of quasi-isomorphisms gives the required claim.

Let $N$ be in $\adicR$ and choose a dg-projective resolution $c \colon Q\xrightarrow{\sim}N$ in $\Ch(R)$. Then the derived counit on $N$ is given by $Q_I^\wedge \to N$, and we need to check that this is a quasi-isomorphism. As $N$ is $I$-adically complete,  it is sufficient to check that $c_I^\wedge\colon Q_I^\wedge \to N_I^\wedge$ is a quasi-isomorphism. Consider the following two commutative 
diagrams
\[
\begin{tikzcd}
Q \arrow[r, "\lambda_Q"] \arrow[d, "\sim", "c"'] & \Lambda_IQ \arrow[d, "\Lambda_I(c)", "\sim"'] &   &  Q \arrow[r, "\sim"', "\gamma_Q"] \arrow[d, "\sim", "c"'] & Q_I^\wedge \arrow[d, "c_I^\wedge"]  \\ 
N \arrow[r, "\sim", "\lambda_N"'] & \Lambda_IN & & N \arrow[r, "\sim", "\gamma_N"'] &  N_I^\wedge.
\end{tikzcd}
 \]
By Proposition~\ref{prop:implication-completions}(d), we know that $N$ is derived complete so by the commutativity of the diagram on the left we see that $Q$ is derived complete. By Theorem~\ref{thm:local homology and L-completion} we know that $Q$ is $I$-adically complete, and so the result follows from the commutativity of the diagram on the right.
\end{proof}

\begin{thm}\label{thm:main1}
Let $R$ be a commutative ring and $I$ be a weakly pro-regular ideal. Each of the following adjunctions
\[\begin{tikzcd}
\adicR \arrow[rr, hookrightarrow, yshift=-1mm] \arrow[dr, yshift=-1mm, hookrightarrow] &  & \LR \arrow[ll, "(-)_I^\wedge"', yshift=1mm]
 \\
 & L_{K(I)}\Ch(R) \arrow[ul, "(-)_I^\wedge"', yshift=1mm] \arrow[ru, yshift=1mm, "L_0^I"] \arrow[ur, hookleftarrow, yshift=-1mm] &
\end{tikzcd}\]
is a symmetric monoidal Quilen equivalence.
\end{thm}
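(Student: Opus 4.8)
The plan is to leverage the Quillen equivalence of Theorem~\ref{thm:cts QE} together with a two-out-of-three argument, so that only one of the two remaining edges needs to be checked by hand. Writing the three left adjoints as $F_1 = (-)_I^\wedge\colon \LR \to \adicR$, $F_3 = L_0^I\colon L_{K(I)}\Ch(R) \to \LR$ and $F_2 = (-)_I^\wedge\colon L_{K(I)}\Ch(R) \to \adicR$, I would first record that $F_2 \cong F_1 \circ F_3$, i.e. the triangle of left adjoints commutes up to natural isomorphism; this is exactly Corollary~\ref{cor:factor} applied levelwise, since $(L_0^I(-))_I^\wedge \cong (-)_I^\wedge$. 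The corresponding triangle of right adjoints commutes on the nose, as all three right adjoints are inclusions of full subcategories. Theorem~\ref{thm:cts QE} shows $F_2$ is a strong monoidal Quillen equivalence, so by the two-out-of-three property for Quillen equivalences it suffices to prove that $F_3$, the edge between derived complete and $L_0^I$-complete complexes, is a Quillen equivalence; the edge $F_1$ then comes for free (with its monoidality checked separately below).

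Before that I would check that all three adjunctions are strong monoidal Quillen adjunctions. Monoidality of the left adjoints is immediate from Proposition~\ref{prop:adic-sym-mon} and Proposition~\ref{prop:L properties}(c), together with Lemma~\ref{lem:internal hom} and Corollary~\ref{cor:factor} to identify the completed tensor products (for instance $F_1(M\otimes N)\cong (M_I^\wedge \otimes N_I^\wedge)_I^\wedge$). For $F_1$ the Quillen condition is trivial: the inclusion $\adicR \hookrightarrow \LR$ preserves surjections and quasi-isomorphisms, hence is right Quillen since both model structures have the same fibrations and weak equivalences. For $F_3 = L_0^I$ I would verify it preserves cofibrations by the adjunction argument of Lemma~\ref{lem:adic completion preserves lifting properties} (with $L_0^I \dashv i$ in place of $(-)_I^\wedge \dashv i$), and that it preserves acyclic cofibrations as follows: an acyclic cofibration in $L_{K(I)}\Ch(R)$ has dg-projective cokernel $P$ with $\Lambda_I P \simeq 0$ by Proposition~\ref{prop-equivalent}, hence $P_I^\wedge \simeq 0$ by Theorem~\ref{thm:local homology and L-completion}; since $P$ is termwise flat by Lemma~\ref{lem:dgprojimpliesproj} this forces $L_0^I P \simeq 0$, and the degreewise split short exact sequence is preserved by the additive functor $L_0^I$.

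For the Quillen equivalence $F_3$ I would compute the derived unit and counit, mirroring the proof of Theorem~\ref{thm:cts QE}. The cofibrant objects of $L_{K(I)}\Ch(R)$ are the dg-projective complexes, and on such a $Q$ the derived unit is the canonical map $Q \to L_0^I Q$; the key point is that $L_0^I Q \simeq \Lambda_I Q \simeq Q_I^\wedge$, which identifies it with the map $Q \to Q_I^\wedge$ already shown to be a $K(I)$-equivalence in Theorem~\ref{thm:cts QE}. For the derived counit, given an $L_0^I$-complete complex $X$ and a cofibrant replacement $Q \xrightarrow{\sim} X$ in $L_{K(I)}\Ch(R)$, the counit is $L_0^I Q \to X$; since $X$ is derived complete by Proposition~\ref{prop:implication-completions}(c) we have $\Lambda_I X \simeq X$, while $\Lambda_I Q \simeq Q_I^\wedge \simeq L_0^I Q$, and the $K(I)$-equivalence $Q \to X$ then exhibits the counit as a quasi-isomorphism via Lemma~\ref{lem:derived completion imples K equivalence} and Theorem~\ref{thm:local homology and L-completion}.

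The main obstacle I anticipate is precisely the comparison $L_0^I Q \simeq \Lambda_I Q$ used throughout: while Theorem~\ref{thm:local homology and L-completion} gives $\Lambda_I Q \simeq Q_I^\wedge = \mathbb{L}(Q)_I^\wedge$ for dg-projective $Q$, identifying these further with the levelwise $L_0^I$-completion requires knowing that the higher derived functors $L_{>0}^I$ vanish on the termwise flat terms of $Q$ and that $L_0^I$ agrees with classical completion there. This is the one input not formally contained in the earlier sections, and it is where weak pro-regularity is genuinely used; everything else is either formal (the two-out-of-three reduction and the monoidality bookkeeping) or a direct transcription of the arguments already given for the $I$-adically complete case.
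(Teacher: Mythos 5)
Your proof is correct and shares the paper's skeleton: both arguments establish the adjunctions via Corollary~\ref{cor:factor}, use Theorem~\ref{thm:cts QE} for the edge $\adicR \simeq_Q L_{K(I)}\Ch(R)$, and then obtain the edge $\adicR \simeq_Q \LR$ for free from the two-out-of-three property of Quillen equivalences. The genuine difference is the remaining edge $L_0^I\colon L_{K(I)}\Ch(R) \rightleftarrows \LR$: the paper simply cites its companion paper~\cite[6.10]{PolWilliamson} for this Quillen equivalence, whereas you reprove it from scratch by transplanting the arguments of Proposition~\ref{prop:QA} and Theorem~\ref{thm:cts QE}, hinged on the identification $L_0^IQ = Q_I^\wedge$ for dg-projective $Q$. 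Your route buys self-containedness (no reliance on the external reference) at the cost of length, and the adaptation does go through, with two caveats worth fixing. First, the identification $L_0^IQ = Q_I^\wedge$ (together with $L_{i}^IQ_n=0$ for $i>0$) holds because each term $Q_n$ is \emph{projective} by Lemma~\ref{lem:dgprojimpliesproj}, where it is true by the very definition of a left derived functor; your appeal to termwise \emph{flatness} is both unnecessary and, as stated, insufficient (the flat case genuinely needs weak pro-regularity), and for the same reason your closing paragraph overstates the difficulty --- this comparison is in fact formal and requires no input beyond Theorem~\ref{thm:local homology and L-completion} and the earlier sections. Second, your counit argument is compressed: a general cofibrant replacement $q\colon Q \to X$ in $L_{K(I)}\Ch(R)$ is only a $K(I)$-equivalence, not a quasi-isomorphism, so you cannot directly read off that $L_0^Iq$ is a quasi-isomorphism. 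Either choose the replacement to be an honest dg-projective resolution (a quasi-isomorphism, as the paper does in Theorem~\ref{thm:cts QE}), so that $X$ derived complete forces $Q$ derived complete and $\gamma_Q$ a quasi-isomorphism, whence two-out-of-three concludes; or keep the general replacement and add the missing step: the counit composed with $\gamma_Q$ equals $q$, and $\gamma_Q$ is a $K(I)$-equivalence by the unit part of Theorem~\ref{thm:cts QE}, so the counit is a $K(I)$-equivalence by two-out-of-three, and a $K(I)$-equivalence between derived complete complexes ($Q_I^\wedge$ and $X$ are such by Proposition~\ref{prop:implication-completions}(c)) is a quasi-isomorphism by Lemma~\ref{lem:derived completion imples K equivalence}. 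With either repair your argument is complete.
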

\begin{proof}
These were proved to be adjunctions in Corollary~\ref{cor:factor}. The Quillen equivalence $\LR \simeq_Q L_{K(I)}\Ch(R)$ is~\cite[6.10]{PolWilliamson} and the Quillen equivalence $\adicR \simeq_Q L_{K(I)}\Ch(R)$ is Theorem~\ref{thm:cts QE}. Therefore the remaining Quillen equivalence follows from the two-of-three property of Quillen equivalences.
\end{proof}

\section{Change of base}
Let $\theta\colon R \to S$ be a map of commutative rings and let $I$ and $J$ be ideals of 
$R$ and $S$ respectively. The ring homomorphism $\theta\colon R \to S$ induces an extension-restriction of scalars adjunction $(\theta_*,\theta^*)$. In this section, we investigate when this adjunction interacts well with $I$-completions of $R$-modules and $J$-completions of $S$-modules. More precisely, we give necessary and sufficient conditions for the vertical adjunctions shown in Figure~\ref{fig:diagram} to be symmetric monoidal Quillen equivalences.

\begin{figure}[!htb]
\centering 
\begin{tikzcd}
\adicR \arrow[rr, hookrightarrow, yshift=-1mm] \arrow[ddd, "(-)_{J}^\wedge\circ\exta"', xshift=-1mm] \arrow[dr, yshift=-1mm] &  & \LR \arrow[ddd, "L_0^{J}\exta"', xshift=-1mm] \arrow[ld, yshift=-1mm] \arrow[ll, "(-)_I^\wedge"', yshift=1mm]
 \\
 & L_{K(I)}\Ch(R) \arrow[ul, "(-)_I^\wedge"', yshift=1mm] \arrow[d, "\exta"', xshift=-1mm] \arrow[ru, yshift=1mm, "L_0^I"] & \\
 & L_{K(J)}\Ch(S)\arrow[dl, "(-)_{J}^\wedge"', yshift=1mm] \arrow[dr, yshift=1mm, "L_0^{J}"] \arrow[u, xshift=1mm, "\resa"'] & \\
\adicS \arrow[ur, yshift=-1mm] \arrow[rr, hookrightarrow, yshift=-1mm]\arrow[uuu, xshift=1mm, "\resa"'] &  & \LS \arrow[uuu, xshift=1mm, "\resa"'] \arrow[lu, yshift=-1mm] \arrow[ll, "(-)_{J}^\wedge"', yshift=1mm]
\end{tikzcd} 
\caption{\label{fig:diagram} Completions and change of base.}
\end{figure}

\subsection{Change of base and completion}

Firstly, we discuss how completion behaves with respect to restriction of scalars. 

\begin{lem}\label{lem-condition ideals}
Let $\theta\colon R \to S$ be a map of commutative rings and let $I$ and $J$ be finitely generated ideals of $R$ and $S$ respectively such that $\sqrt{IS} = \sqrt{J}$.
\begin{itemize}
\item[(a)] There is a natural isomorphism of functors $(-)^\wedge_{IS}\cong (-)^\wedge_J$.
\item[(b)] There is a natural isomorphism $K_\infty(J) \cong S \Lotimes_R K_\infty(I)$ in $\D(S)$.
\end{itemize}
\end{lem}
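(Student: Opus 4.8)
The plan is to handle the two parts separately, though both ultimately rest on the same input: since $I$ and $J$ are finitely generated with $\sqrt{IS} = \sqrt{J}$, the ideals $IS$ and $J$ of $S$ have equal radical and hence mutually comparable powers.

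For part (a), I would first observe that since $\sqrt{IS} = \sqrt{J}$ and both ideals are finitely generated, there exist positive integers $a$ and $b$ with $J^a \subseteq IS$ and $(IS)^b \subseteq J$; this is the standard fact that finitely generated ideals with equal radical have intertwined powers. Consequently, for any $S$-module $M$ the descending filtrations $\{J^n M\}$ and $\{(IS)^n M\}$ are mutually cofinal, since $J^{an}M \subseteq (IS)^n M$ and $(IS)^{bn} M \subseteq J^n M$. Passing to inverse limits, the canonical maps identify $M^\wedge_{IS}$ with $M^\wedge_J$, and this identification is natural in $M$.

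For part (b), the key point is that $K_\infty(I)$ is a bounded complex of flat $R$-modules, since each $K_\infty(x_i) = (R \to R[1/x_i])$ consists of localizations, which are flat. Therefore the derived tensor product may be computed naively: $S \Lotimes_R K_\infty(I) \simeq S \otimes_R K_\infty(I)$. Next I would compute this tensor product explicitly. Since tensoring commutes with localization, $S \otimes_R K_\infty(x_i) \cong (S \to S[1/\theta(x_i)]) = K_\infty(\theta(x_i))$ as complexes of $S$-modules, and by compatibility of tensor products with base change this gives
\[
S \otimes_R K_\infty(I) \cong K_\infty(\theta(x_1)) \otimes_S \cdots \otimes_S K_\infty(\theta(x_n)) = K_\infty(IS),
\]
the stable Koszul complex of the extended ideal $IS$ computed with respect to the images $\theta(x_i)$. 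Finally, since $\sqrt{IS} = \sqrt{J}$, Remark~\ref{rem:radical} supplies a natural zig-zag of quasi-isomorphisms identifying $K_\infty(IS)$ with $K_\infty(J)$ in $\D(S)$, which completes the argument.

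I expect both proofs to be largely routine, but the step requiring the most care is ensuring that the identification in (b) genuinely lives in $\D(S)$ rather than merely in $\D(R)$: here the observation that $S \otimes_R K_\infty(I)$ carries an evident $S$-module structure matching that of $K_\infty(IS)$ is precisely what upgrades the comparison to an isomorphism of $S$-complexes. The main conceptual input in both parts is the intertwining of the powers of $IS$ and $J$—equivalently, the radical invariance of the stable Koszul complex—so once that is in hand everything else follows by direct computation.
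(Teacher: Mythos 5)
Your proof is correct and follows essentially the same route as the paper: for part (a), intertwined powers of $IS$ and $J$ giving mutually cofinal filtrations and hence identified inverse limits, and for part (b), the termwise identification $S \otimes_R K_\infty(I) \cong K_\infty(IS)$ combined with flatness (so the derived tensor is the underived one) and radical-invariance of the stable Koszul complex. The only difference is that the paper proves the power-intertwining fact inline via a pigeonhole argument on the generators of $J^q$, whereas you invoke it as standard, which is legitimate here since both $IS$ and $J$ are finitely generated ideals of $S$.
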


\begin{proof}
We first claim that there exist  $p,q\in \Z$ such that $IS^p \subset J^q \subset IS.$ Assuming this, we get systems of maps $M/J^{qk}M\to M/IS^{k}M$ and 
$M/ IS^{pk} M\to M/J^{qk}M$ for all $k\geq 0$ and modules $M$. Then one checks that the induced maps between completions are mutually inverse so part (a) follows. 

To prove the claim, we write $J=(y_1, \ldots, y_m)$. For each $y_j\in J$, there exists $a_j\in \Z$ such that $y_j^{a_j}\in IS$ since $J \subset \sqrt{IS}$. Put $a=\max_j a_j$ so that $y_j^a\in IS$ for all $j$.  Note that any generator of $J^q$ can be written as $y_1^{\alpha_1} \ldots y_m^{\alpha_m}$ with $\alpha_1 + \ldots+ \alpha_m = q$. If we take $q$ large enough (for instance $q=ma$), then there must exist at least one index $j$ such that $\alpha_j \geq a$ so $y_j^{\alpha_j}\in IS$. This shows that $J^q \subset IS$. Now notice that 
$\sqrt{J^q}=\sqrt{J}$, so by applying the same argument to $J^q$ and $IS$ we 
conclude that there exists $p\in \Z$ such that $IS^p \subset J^q$. This proves 
the claim and hence concludes the proof of (a). 

For part (b), one checks directly from the definitions that there is a natural isomorphism of 
complexes $K_\infty(IS) \cong S \otimes_R K_\infty(I)$. Since $K_\infty(I)$ is a bounded complex of flat $R$-modules it is $K$-flat~\cite[11.3.3]{AvramovFoxbyHalperin03}, and therefore $S \Lotimes_R K_\infty(I) = S \otimes_R K_\infty(I)$. The claim then follows from the fact the stable Koszul complex depends only on the radical of the ideal, see Remark~\ref{rem:radical}.
\end{proof}

\begin{lem}\label{lem:change of base}
Let $\theta\colon R \to S$ be a map of commutative rings and let $I$ and $J$ be finitely generated ideals of $R$ and $S$ respectively such that $\sqrt{IS} = \sqrt{J}$. Let $M$ be a complex of $S$-modules.
\begin{itemize}
\item[(a)] There is a natural isomorphism $(\resa M)_I^\wedge \cong \resa M_J^\wedge$ in $\adicR$.
\item[(b)] There is a natural isomorphism $\resa L_0^{J}(M) \cong L_0^I(\resa M)$
in $\LR$.
\item[(c)] There is a natural isomorphism $\resa\Lambda_JM \simeq \Lambda_I(\resa M)$ in $\D(R)$. 
\end{itemize}
\end{lem}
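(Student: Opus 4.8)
The plan is to treat the three parts in the order (a), (c), (b), since the derived statement (c) is the technical heart and the $L_0$-statement (b) will be deduced from it. Throughout I would reduce the complex statements to the module case, as all the functors involved are applied levelwise and all the isomorphisms produced are natural, hence automatically compatible with differentials.

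For part (a), I would exploit that restriction of scalars does not change the underlying abelian group, so it commutes with the inverse limits defining completion. The key computation is that the $R$-submodule $I^n\cdot\resa M$ coincides with the $S$-submodule $(IS)^n M$: both equal $\theta(I^n)M$, because the $R$-action on $\resa M$ factors through $\theta$ and $(IS)^n = \theta(I^n)S$. Hence $\resa M/I^n\resa M \cong \resa(M/(IS)^nM)$ naturally, and passing to the limit gives $(\resa M)_I^\wedge \cong \resa(M_{IS}^\wedge)$. I would then invoke Lemma~\ref{lem-condition ideals}(a) to replace $(-)_{IS}^\wedge$ by $(-)_J^\wedge$, yielding the claim; the result lands in $\adicR$ by Proposition~\ref{prop:algebraic and analytic}, applied levelwise. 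For part (c), I would combine Lemma~\ref{lem-condition ideals}(b), which gives $K_\infty(J)\simeq \exta K_\infty(I)$ in $\D(S)$, with the derived extension--restriction adjunction:
\[
\resa\Lambda_J M \simeq \resa\RHom_S(\exta K_\infty(I),M) \simeq \RHom_R(K_\infty(I),\resa M) = \Lambda_I(\resa M),
\]
where the last isomorphism is the standard identity $\resa\RHom_S(\exta N, M)\simeq \RHom_R(N,\resa M)$. I would note that this holds at the derived level because $K_\infty(I)$ is a bounded complex of flats, hence $K$-flat, so $\exta K_\infty(I)$ requires no further resolution; this is the same observation used in the proof of Lemma~\ref{lem-condition ideals}. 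This argument uses only that $I$ and $J$ are finitely generated.

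For part (b) I would pass through part (c). For a single module $M$, the homology of the derived completion computes the classical $L$-completion, i.e. $H_0(\Lambda_J M)=L_0^J M$ and $H_0(\Lambda_I\resa M)=L_0^I\resa M$, by Theorem~\ref{thm:local homology and L-completion} together with Remark~\ref{rem-meaning-L_n-complexes}. Since $\resa$ is exact it commutes with $H_0$, so applying $H_0$ to the isomorphism of part (c) yields $\resa L_0^J M\cong L_0^I\resa M$; the right-hand side is $L_0^I$-complete by Lemma~\ref{lem:higher L vanishes}, so the isomorphism lives in $\LR$. The main obstacle is exactly this part. The naive attempt -- restrict an $S$-free presentation of $M$ and compare cokernels of completions using part (a) -- breaks down because $\resa S$ need not be flat or projective over $R$, so the restriction of an $S$-projective resolution is not an $R$-projective resolution and $(\resa G)_I^\wedge$ need not equal $L_0^I(\resa G)$. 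Routing through the derived completion circumvents this, at the cost of invoking the identification of $H_\ast(\Lambda)$ with the classical derived functors of completion on modules, which is where weak pro-regularity of $I$ and $J$ enters.
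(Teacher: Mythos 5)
Your arguments for (a) and (c) are exactly the paper's: part (a) is proved there by combining Lemma~\ref{lem-condition ideals}(a) with the fact that restriction of scalars commutes with limits, and part (c) by combining Lemma~\ref{lem-condition ideals}(b) with the tensor--hom adjunction. The divergence is in part (b), where the paper takes precisely the route you rejected: it deduces (b) from (a) together with the fact that $\resa$ preserves cokernels, i.e.\ it restricts an $S$-projective presentation $P_1\to P_0\to M\to 0$, commutes $\resa$ past the completions using (a), and identifies $\resa L_0^J M$ with $\mathrm{coker}\bigl((\resa P_1)_I^\wedge\to(\resa P_0)_I^\wedge\bigr)$. Your objection to this route is substantive: for that cokernel to compute $L_0^I(\resa M)$ one needs the modules $\resa P_i$ to be acyclic for the left derived functors of $I$-adic completion, meaning $L_n^I(\resa P_i)=0$ for $n\geq 1$ and $L_0^I(\resa P_i)\cong(\resa P_i)_I^\wedge$; this is not automatic, since $\resa P_i$ is in general neither projective nor flat over $R$, and the paper's one-sentence justification passes over it. Your detour through (c) and Theorem~\ref{thm:local homology and L-completion} is a correct fix, and it buys more than you claim: it yields $L_n^I(\resa N)\cong\resa L_n^J N$ for every $S$-module $N$ and every $n$, which is exactly the missing acyclicity statement, so it simultaneously repairs the paper's own argument.

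The one genuine cost is hypotheses. The lemma is stated for finitely generated ideals, but your proof of (b) invokes Theorem~\ref{thm:local homology and L-completion} together with Remark~\ref{rem-meaning-L_n-complexes} and Lemma~\ref{lem:higher L vanishes}, and therefore needs $I$ and $J$ weakly pro-regular, as you acknowledge; strictly speaking you establish (b) in less generality than claimed. In practice this loss is harmless: the only places part (b) is used, Proposition~\ref{prop:equivconditions} and Theorem~\ref{thm:main2}, assume weak pro-regularity anyway, and it is not clear that the paper's own terse argument attains the stated finitely generated generality either, for precisely the reason you identified. Your levelwise reduction of (a) and (b) to the module case is unproblematic, since those functors are applied to complexes levelwise and your isomorphisms are natural; for (c) no reduction is needed, as your adjunction argument applies directly to complexes.
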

\begin{proof}
For part (a) we have $\resa M_J^\wedge \cong \resa M_{IS}^\wedge \cong (\resa M)_I^\wedge$ using Lemma~\ref{lem-condition ideals}(a) and the fact that restriction of scalars commutes with limits. Then part (b) follows from (a) and the fact that $\theta^*$ preserves cokernels. 
Part (c) follows from Lemma~\ref{lem-condition ideals}(b) and an application of the tensor-hom adjunction. 
\end{proof}

In light of the previous lemma, we will often neglect to write the restriction of scalars functor in what follows.

\begin{lem}\label{lem:radical}
Let $\theta\colon R \to S$ be a map of commutative rings and let $I$ and $J$ be finitely generated ideals of $R$ and $S$ respectively such that $\sqrt{IS} = \sqrt{J}$. There is an equality of model categories $$L_{K(J)}\Ch(S) =L_{K(IS)}\Ch(S).$$
\end{lem}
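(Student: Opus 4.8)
The plan is to observe that both homological localizations have, by definition, the same cofibrations---namely the cofibrations of $\Ch(S)$---so it suffices to prove that the two classes of weak equivalences agree; the fibrations are then forced to coincide, being determined by the right lifting property against acyclic cofibrations. Thus the entire statement reduces to showing that a map $f$ of complexes of $S$-modules is a $K(J)$-equivalence if and only if it is a $K(IS)$-equivalence.

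First I would reduce the question on weak equivalences to a statement about a single complex. Writing $C$ for the cofibre of $f$ in $\D(S)$, the map $K(J)\Lotimes_S f$ is a quasi-isomorphism if and only if $K(J)\Lotimes_S C \simeq 0$, and similarly for $IS$. So I must show that $K(J)\Lotimes_S C \simeq 0$ if and only if $K(IS)\Lotimes_S C \simeq 0$, for every $C \in \D(S)$.

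The key step is to pass from the unstable to the stable Koszul complex, where radical-invariance is available. Applying Proposition~\ref{prop-equivalent} to the ideal $J$ of $S$ gives $K(J)\Lotimes_S C \simeq 0$ if and only if $K_\infty(J)\Lotimes_S C \simeq 0$, and applying it to the ideal $IS$ gives the analogous equivalence for $IS$. Since $\sqrt{J} = \sqrt{IS}$, Remark~\ref{rem:radical} shows that $K_\infty(J)$ and $K_\infty(IS)$ are isomorphic in $\D(S)$, hence $K_\infty(J)\Lotimes_S C \simeq 0$ if and only if $K_\infty(IS)\Lotimes_S C \simeq 0$. Chaining these equivalences yields $K(J)\Lotimes_S C \simeq 0$ if and only if $K(IS)\Lotimes_S C \simeq 0$, which is exactly what is required.

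There is no serious obstacle here; the only point requiring care is that Proposition~\ref{prop-equivalent} and Remark~\ref{rem:radical} are each phrased for a single ideal, so I apply them twice---once for $J$ and once for $IS$---and link the two instances through the radical-invariance of the stable Koszul complex $K_\infty(-)$ over $S$. Alternatively, one could argue that $K(J)$ and $K(IS)$ generate the same localizing subcategory of $\D(S)$ (via the common stable Koszul complex of Lemma~\ref{lem-koszul-build}) and then deduce the equality of weak equivalences from the fact that $\{N \in \D(S) : N\Lotimes_S C \simeq 0\}$ is localizing, as in the proof of Proposition~\ref{prop-equivalent}.
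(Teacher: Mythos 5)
Your proposal is correct and follows essentially the same route as the paper's proof: both reduce to showing the $K(J)$- and $K(IS)$-equivalences coincide (the cofibrations agreeing by definition), then use Proposition~\ref{prop-equivalent} to pass between unstable and stable Koszul complexes for each ideal, and link the two via the radical-invariance of $K_\infty(-)$ from Remark~\ref{rem:radical}. The extra detail you give (passing to the cofibre, and noting that one could instead argue via localizing subcategories as in Lemma~\ref{lem-koszul-build}) is a harmless elaboration of the same argument.
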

\begin{proof}
Since they both have the same cofibrations by definition, it suffices to show that a map is a $K(J)$-equivalence if and only if it is a $K(IS)$-equivalence. Since the stable Koszul complex depends on the ideal only up to radical, the $K_\infty(J)$-equivalences are the same as the $K_\infty(IS)$-equivalences. Using Proposition~\ref{prop-equivalent}, we see that the $K(J)$-equivalences are the same as the $K_\infty(J)$-equivalences. Similarly, one sees that the $K(IS)$-equivalences are the same as the $K_\infty(IS)$-equivalences and the result follows.
\end{proof}

\begin{prop}\label{prop:equivconditions}
Let $\theta\colon R \to S$ be a map of commutative rings and let $I$ and $J$ be weakly pro-regular ideals of $R$ and $S$ respectively such that $\sqrt{IS} = \sqrt{J}$. The following conditions are equivalent:
\begin{itemize}
\item[(a)] $\Lambda_IR \to \Lambda_J S$ is an isomorphism in $\D(R)$;
\item[(b)] $K_\infty(I) \to K_\infty(J)$ is an isomorphism in $\D(R)$;
\item[(c)] $R_I^\wedge \to S_J^\wedge$ is an isomorphism of rings;
\item[(d)] $R/I \to S/IS$ is an isomorphism of rings.
\end{itemize}
\end{prop}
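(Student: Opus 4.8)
The plan is to pull all four conditions back to a single vanishing statement about the cofibre $C$ of $\theta\colon R \to S$, regarded as a morphism in $\D(R)$. Throughout I work in $\D(R)$ via the restriction of scalars $\resa$, which I suppress from the notation as justified by Lemma~\ref{lem:change of base}. Completing the triangle $R \xrightarrow{\theta} S \to C$ and applying the triangulated functors $\Lambda_I = \RHom_R(K_\infty(I),-)$ and $K_\infty(I)\Lotimes_R -$ produces triangles
\[
\Lambda_I R \to \Lambda_I S \to \Lambda_I C, \qquad K_\infty(I) \to K_\infty(I)\Lotimes_R S \to K_\infty(I)\Lotimes_R C .
\]
Using Lemma~\ref{lem-condition ideals}(b) to identify $K_\infty(J)\simeq S\Lotimes_R K_\infty(I)$ and Lemma~\ref{lem:change of base}(c) to identify $\Lambda_J S\simeq \Lambda_I S$, the maps appearing in (a) and (b) become $\Lambda_I(\theta)$ and $K_\infty(I)\Lotimes_R\theta$. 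Hence (a) is equivalent to $\Lambda_I C \simeq 0$, while (b) is equivalent to $K_\infty(I)\Lotimes_R C\simeq 0$, and these two vanishings coincide by Proposition~\ref{prop-equivalent} applied with $M=C$.

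To bring in (c), I would invoke Theorem~\ref{thm:local homology and L-completion} to identify $\Lambda_I R\simeq R_I^\wedge$ and $\Lambda_J S\simeq S_J^\wedge$, both concentrated in degree zero since $R$ and $S$ serve as their own dg-projective resolutions. Under these identifications the map of (a) becomes the ring homomorphism $R_I^\wedge\to S_J^\wedge$; as both sides are discrete, an isomorphism in $\D(R)$ is the same as an isomorphism of $R$-modules, and since the map is a ring homomorphism this is an isomorphism of rings. This gives (a) $\Leftrightarrow$ (c), completing the equivalence of (a), (b) and (c).

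It remains to incorporate (d). The implication (c) $\Rightarrow$ (d) is immediate: reducing the isomorphism $R_I^\wedge\cong S_J^\wedge\cong S_{IS}^\wedge$ modulo $I$ and using $R_I^\wedge/IR_I^\wedge\cong R/I$ and $S_{IS}^\wedge/(IS)S_{IS}^\wedge\cong S/IS$ yields the ring isomorphism $R/I\cong S/IS$. For the converse I would argue by successive approximation. Writing $R_I^\wedge=\varprojlim_n R/I^n$ and $S_J^\wedge\cong\varprojlim_n S/(IS)^n$ (the latter by Lemma~\ref{lem-condition ideals}(a)), it suffices to promote the base case $n=1$, which is exactly (d), to isomorphisms $R/I^n\xrightarrow{\sim} S/(IS)^n$ for all $n$ and then pass to the inverse limit. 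Comparing the short exact sequences $0\to I^n/I^{n+1}\to R/I^{n+1}\to R/I^n\to 0$ with their $S$-counterparts and applying the five lemma reduces the inductive step to showing that $I^n/I^{n+1}\to (IS)^n/(IS)^{n+1}$ is an isomorphism.

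The main obstacle is precisely this last reduction: condition (d) controls only the degree-zero part of the associated graded, so upgrading it to an isomorphism on all of $\mathrm{gr}_I$ is where the hypotheses must genuinely be used. Equivalently, in the derived language above the task is to upgrade the vanishing of $H_0(R/I\Lotimes_R C)$ supplied by (d) to the full vanishing $R/I\Lotimes_R C\simeq 0$ characterising (b); the higher graded pieces correspond exactly to the higher homology of $R/I\Lotimes_R C$. I expect the radical condition $\sqrt{IS}=\sqrt J$ together with weak pro-regularity to enter decisively here, forcing the two $I$-adic filtrations to be comparable closely enough that agreement in degree zero propagates to every degree. This step is the delicate heart of the proof, and once it is secured the equivalence (a) $\Leftrightarrow$ (d) in particular specialises to the Sather-Wagstaff--Wicklein situation $S=R_I^\wedge$, where (d) holds tautologically.
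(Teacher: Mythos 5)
The parts of your proposal that you do complete are correct and essentially coincide with the paper's own proof: the paper also passes to the cofibre $C$ of $\theta$ in $\D(R)$, identifies (a) with $\Lambda_I C \simeq 0$ and (b) with $K_\infty(I) \Lotimes_R C \simeq 0$ via Lemma~\ref{lem:change of base} and Lemma~\ref{lem-condition ideals}, equates these two vanishing conditions by Proposition~\ref{prop-equivalent}, and obtains (a) $\Leftrightarrow$ (c) from Theorem~\ref{thm:local homology and L-completion}. Your (c) $\Rightarrow$ (d) by reduction modulo $I$ is a harmless variant of the paper's route, which instead proves (b) $\Rightarrow$ (d) by observing that $R/I \Lotimes_R C \simeq 0$ forces $R/I \to R/I \Lotimes_R S \simeq S/IS$ to be an isomorphism. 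What you do not prove is the implication (d) $\Rightarrow$ (c), which you reduce, via the five lemma, to showing that the graded maps $I^n/I^{n+1} \to (IS)^n/(IS)^{n+1}$ are isomorphisms, and then leave open. That is a genuine gap in the proposal.

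However, your caution at exactly this point is vindicated in the strongest possible way: the implication (d) $\Rightarrow$ (c) is false under the stated hypotheses, so the gap cannot be closed, and the paper's own proof of this step is erroneous. Take $R = k[x]$ over a field $k$, let $S = k[x]/(x^2)$ with $\theta$ the quotient map, $I = (x)$ and $J = IS$. Both rings are Noetherian, so $I$ and $J$ are weakly pro-regular, and $\sqrt{IS} = \sqrt{J}$. Condition (d) holds, since $R/I \to S/IS$ is the identity of $k$. But $R_I^\wedge = k[[x]]$ whereas $S_J^\wedge = S$ (because $J^2 = 0$), so (c) fails; correspondingly $\mathrm{Tor}_1^R(R/I,S) = (x)/(x^2) \neq 0$, so $R/I \Lotimes_R C \not\simeq 0$ and (b) fails as well. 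The paper disposes of (d) $\Rightarrow$ (c) with the single assertion that the isomorphism $R/I \to S/IS$ ``induces an isomorphism on each level of the limit system''; this is precisely the inductive step you declined to assert without proof, and it is false: in the example, $R/I^3 = k[x]/(x^3) \to S/I^3S = k[x]/(x^2)$ is not injective, and your graded criterion detects the failure since $I^2/I^3 \cong k$ while $(IS)^2/(IS)^3 = 0$. (Only surjectivity propagates: $S = \theta(R) + IS$ gives $S = \theta(R) + I^nS$ for all $n$.) The proposition can be repaired by strengthening (d) to the assertion that the natural map $R/I \to R/I \Lotimes_R S$ is a quasi-isomorphism, i.e., that $R/I \to S/IS$ is an isomorphism \emph{and} $\mathrm{Tor}_i^R(R/I,S) = 0$ for all $i > 0$; with that reformulation your derived-category framework closes the cycle immediately, since this condition is verbatim $R/I \Lotimes_R C \simeq 0$, which Proposition~\ref{prop-equivalent} identifies with (b).
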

\begin{proof}
Firstly let us show that (a) and (b) are equivalent. Write $C$ for the cofibre of $R \to S$ in $\D(R)$. Using Lemma~\ref{lem:change of base}(c), we see that (a) is equivalent to $\Lambda_I(C) \simeq 0$. By Proposition~\ref{prop-equivalent}, this is equivalent to $K_\infty(I) \Lotimes_R C \simeq 0$ and hence the map $K_\infty(I) \to K_\infty(I) \Lotimes_R S \simeq K_\infty(J)$ being an isomorphism in $\D(R)$.
The equivalence of (a) and (c) is an immediate consequence of the fact that $\Lambda_IR \simeq R_I^\wedge$ and $\Lambda_JS \simeq S_J^\wedge$ by Theorem~\ref{thm:local homology and L-completion}. Next we show that (d) implies (c). Since the isomorphism $R/I \to S/IS$ induces an isomorphism on each level of the limit system, there is an isomorphism $R_I^\wedge \to S_{IS}^\wedge$. Combining this with Lemma~\ref{lem-condition ideals}(a) shows that $R_I^\wedge \to S_J^\wedge$ is an isomorphism. 

It remains to show that (b) implies (d). We have already seen that (b) is equivalent to the statement that $K_\infty(I) \Lotimes_R C \simeq 0$, and hence to $R/I \Lotimes_R C \simeq 0$ by Proposition~\ref{prop-equivalent}. The latter condition is equivalent to $R/I \to R/I \Lotimes_R S$ being a quasi-isomorphism and therefore the homology of $R/I \Lotimes_R S$ is concentrated in degree zero. This shows that the natural map $R/I \Lotimes_R S \to R/I \otimes_R S \cong S/IS$ is a quasi-isomorphism, and hence the map $R/I \to S/IS$ is a quasi-isomorphism, and as such an isomorphism. 
\end{proof}

\begin{rem}
Write $\Gamma_I = K_\infty(I) \Lotimes_R -$ for the derived $I$-torsion 
functor, i.e., the (derived) local cohomology with respect to $I$. Then part (b) could also be written as $\Gamma_IR \to \Gamma_JS$ being an isomorphism in $\D(R)$.
\end{rem}

\begin{thm}\label{thm:main2}
Let $\theta\colon R \to S$ be a map of commutative rings and let $I$ and $J$ be weakly pro-regular ideals of $R$ and $S$ respectively such that $\sqrt{IS} = \sqrt{J}$. The vertical adjunctions in the diagram shown in Figure~\ref{fig:diagram}
are symmetric monoidal Quillen equivalences if and only if any of the equivalent conditions in Proposition~\ref{prop:equivconditions} hold. \end{thm}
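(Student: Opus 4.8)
The plan is to collapse the whole diagram onto its middle vertical adjunction and then analyze that adjunction on homotopy categories by a localizing-subcategory argument pinned at the monoidal unit. First I would note that all three vertical adjunctions are Quillen adjunctions, since in each case the right adjoint is (a corestriction of) restriction of scalars $\theta^*$, which preserves surjections and quasi-isomorphisms and hence all fibrations and weak equivalences. By Theorem~\ref{thm:main1} the horizontal and diagonal adjunctions over $R$, and those over $S$, are Quillen equivalences. Lemma~\ref{lem:change of base} shows that $\theta^*$ commutes strictly with the horizontal right adjoints (the inclusions and the functors $(-)_I^\wedge$ and $L_0^I$); moreover $\theta^*$ preserves and reflects the relevant weak equivalences---quasi-isomorphisms, and $K(I)$-equivalences by Lemma~\ref{lem:change of base}(c) together with conservativity of restriction---so its total right derived functor is computed without fibrant replacement and the induced squares of derived functors commute on the nose. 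Since the horizontal derived functors are equivalences, the two-out-of-three property for Quillen equivalences reduces the statement to the middle adjunction
\[
\theta_*\colon L_{K(I)}\Ch(R)\rightleftarrows L_{K(J)}\Ch(S)\colon \theta^*.
\]
That this is a strong symmetric monoidal Quillen adjunction is straightforward: $\theta_*=S\otimes_R-$ preserves cofibrations and, by Lemma~\ref{lem-condition ideals}(b) and Proposition~\ref{prop-equivalent}, carries $K(I)$-equivalences to $K(J)$-equivalences, while strong monoidality is inherited from that of extension of scalars.

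Next, using Lemma~\ref{lem:derived completion imples K equivalence} I identify the two homotopy categories with the full subcategories $\D_{\Lambda_I}(R)$ and $\D_{\Lambda_J}(S)$ of derived complete complexes, under which the derived adjunction becomes
\[
\mathbb{L}\theta_*=\Lambda_J(S\Lotimes_R-)\colon \D_{\Lambda_I}(R)\rightleftarrows \D_{\Lambda_J}(S)\colon \theta^*.
\]
The crucial structural input is that each side is generated by its tensor unit as a localizing subcategory of itself: the derived completion $\Lambda_I\colon \D(R)\to \D_{\Lambda_I}(R)$ is the reflector of the localization and therefore preserves coproducts, so from $\D(R)=\mathrm{Loc}(R)$ we obtain $\D_{\Lambda_I}(R)=\mathrm{Loc}(R_I^\wedge)$, and similarly $\D_{\Lambda_J}(S)=\mathrm{Loc}(S_J^\wedge)$. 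Both $\mathbb{L}\theta_*$ (a left adjoint) and $\theta^*$ (by Lemma~\ref{lem:change of base}(c) and the fact that restriction commutes with coproducts) preserve coproducts and triangles, so the full subcategories of objects on which the unit $\eta$, respectively the counit $\varepsilon$, is invertible are localizing.

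It therefore suffices to test $\eta$ and $\varepsilon$ on the generating units. Evaluating $\eta$ at the unit (represented by $R$) and using $\theta^*\Lambda_J S\simeq \Lambda_I S$ from Lemma~\ref{lem:change of base}(c), the unit map becomes $\Lambda_I R\to \Lambda_I S$, that is $\Lambda_I$ applied to $\theta\colon R\to S$; after identifying $\theta^*\Lambda_J S$ with $\Lambda_J S$ in $\D(R)$ this is exactly condition (a) of Proposition~\ref{prop:equivconditions}. Hence if $\mathbb{L}\theta_*\dashv\theta^*$ is an equivalence then $\eta$ is invertible on $R_I^\wedge$, giving (a). Conversely, assuming (a), the unit is invertible on the generator $R_I^\wedge$ and hence on all of $\D_{\Lambda_I}(R)$ by the localizing argument; since $\mathbb{L}\theta_*(R_I^\wedge)\simeq \Lambda_J S$, a triangle identity then forces $\varepsilon$ to be invertible on the generator $S_J^\wedge$, and the localizing argument propagates this to all of $\D_{\Lambda_J}(S)$. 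With $\eta$ and $\varepsilon$ both invertible the middle adjunction is an adjoint equivalence, hence a symmetric monoidal Quillen equivalence, and by the reduction so are all three vertical adjunctions. The equivalence of (a) with (b)--(d) is Proposition~\ref{prop:equivconditions}.

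The hard part will be the reduction-to-the-generator machinery: establishing cleanly that $\D_{\Lambda_I}(R)=\mathrm{Loc}(R_I^\wedge)$ and that both $\theta^*$ and $\mathbb{L}\theta_*$ preserve the nonstandard coproducts of these completed categories, so that the loci where $\eta$ and $\varepsilon$ are invertible are genuinely localizing. Identifying the derived unit on the generator with the map $\Lambda_I(\theta)$ of condition (a)---rather than some a priori different natural transformation---is the other delicate point, and is exactly where Lemma~\ref{lem:change of base}(c) does the work.
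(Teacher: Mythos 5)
Your proposal is correct in outline, but it takes a genuinely different route from the paper at the crucial step, so let me compare the two. Both you and the paper reduce everything to the middle vertical adjunction $\theta_*\colon L_{K(I)}\Ch(R)\rightleftarrows L_{K(J)}\Ch(S)\colon\theta^*$ by the two-out-of-three property of Quillen equivalences, using Theorem~\ref{thm:main1} and the strict compatibility of $\theta^*$ with the completion functors from Lemma~\ref{lem:change of base}; and both identify the derived unit at the monoidal unit with the map $\Lambda_IR\to\Lambda_JS$, which gives the forward implication and ties the statement to condition (a) of Proposition~\ref{prop:equivconditions}. The difference is the converse direction: the paper first replaces $J$ by $IS$ (Lemma~\ref{lem:radical}) and then invokes the Left Localization Principle of \cite[3.13]{PolWilliamson} as a black box, checking only that $\theta^*$ sends $K(IS)$-equivalences to $K(I)$-equivalences and that the derived unit on $R$ is a $K(I)$-equivalence. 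You instead reprove that principle by hand in this particular case: you identify the homotopy categories with the subcategories of derived complete complexes, establish $\D_{\Lambda_I}(R)=\mathrm{Loc}(R_I^\wedge)$ and $\D_{\Lambda_J}(S)=\mathrm{Loc}(S_J^\wedge)$ with respect to the completed coproducts, check that $\theta^*$ and $\mathbb{L}\theta_*$ preserve these coproducts (the former via Lemma~\ref{lem:change of base}(c), the latter because it is a left adjoint), and propagate invertibility of the unit and counit from the generators, the counit via a triangle identity. This is precisely the content of the cited principle, so your argument is a legitimate self-contained substitute; its cost is the bookkeeping you yourself flag (completed coproducts, identifying the derived unit on the generator with $\Lambda_I(\theta)$), and its benefit is independence from the external reference. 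One inaccuracy to repair: your opening claim that all three verticals are Quillen adjunctions ``because $\theta^*$ preserves surjections and quasi-isomorphisms'' fails for the middle one, since fibrations and weak equivalences in the Bousfield-localized model structures are not the surjections and quasi-isomorphisms; however, your later verification --- that $\theta_*$ preserves cofibrations and carries $K(I)$-equivalences between cofibrant objects to $K(J)$-equivalences, by the Koszul base change of Lemma~\ref{lem-condition ideals}(b) and Proposition~\ref{prop-equivalent} --- is the correct argument, and is the mirror image of the condition the paper checks for $\theta^*$ when applying the Left Localization Principle.
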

\begin{proof}
First suppose that the vertical adjunctions are Quillen equivalences so that $$\begin{tikzcd}L_{K(I)}\Ch(R) \arrow[r, yshift=1mm, "\exta"] & L_{K(J)}\Ch(S) \arrow[l, yshift=-1mm, "\resa"]\end{tikzcd}$$ is a symmetric monoidal Quillen equivalence. In particular, the derived unit on $R$ is a weak equivalence in $L_{K(I)}\Ch(R)$ which means that $\theta\colon R \to S$ is a $K(I)$-equivalence. It follows by Lemma~\ref{lem:derived completion imples K equivalence} that $\Lambda_IR \to \Lambda_IS$ is a quasi-isomorphism, and hence that $\Lambda_IR \to \Lambda_JS$ is an isomorphism in $\D(R)$ by Lemma~\ref{lem:change of base}.

We now prove the converse. By Lemma~\ref{lem:radical} it is enough to show 
that 
$$\begin{tikzcd}L_{K(I)}\Ch(R) \arrow[r, yshift=1mm, "\exta"] & L_{K(IS)}\Ch(S) \arrow[l, yshift=-1mm, "\resa"]\end{tikzcd}$$ is a symmetric monoidal Quillen equivalence. We will prove this using the Left Localization Principle~\cite[3.13]{PolWilliamson}. Firstly, we need to check that $\theta^*$ 
sends $K(IS)$-equivalences to $K(I)$-equivalences. This follows from the sequence 
of quasi-isomorphisms
\[
\theta^*(M\Lotimes_S K(IS))\simeq\theta^*(M \Lotimes_S S \Lotimes_R K(I))\simeq\theta^*(M) \Lotimes_R K(I).
\]
Secondly, we need to check that the derived unit on $R$, 
namely the map $\theta\colon R \to S$ is a $K(I)$-equivalence. 
By Lemma~\ref{lem:derived completion imples K equivalence} this is equivalent to 
checking that the map $\Lambda_IR \to \Lambda_IS$ is a quasi-isomorphism 
which holds by hypothesis. 
Therefore the Left Localization Principle applies and the result follows.

For the other two vertical adjunctions we discuss the case for $L_0^I$-completion and note that the version for adic completion is analogous. Note that the adjunction $$\begin{tikzcd}\LR \arrow[r, yshift=1mm, "L_0^J\exta"] & \LS \arrow[l, yshift=-1mm, "\resa"]\end{tikzcd}$$ is well defined since $\resa$ sends $L_0^J$-complete modules to $L_0^I$-complete modules by Lemma~\ref{lem:change of base}. 
Using Proposition~\ref{prop:L properties}(c), one verifies that the left adjoint is the composite $L_0^J\exta$ as claimed. This adjunction is a symmetric monoidal Quillen equivalence by the 2-out-of-3 property of Quillen equivalences, since the horizontals are Quillen equivalences by Theorem~\ref{thm:main1}.
\end{proof}

\begin{rem}
The hypothesis that $J$ is weakly pro-regular in the previous theorem, is equivalent to the hypothesis that $IS$ is weakly pro-regular by~\cite[3.3]{Schenzel03} since $\sqrt{IS} = \sqrt{J}$. In particular, if $\theta\colon R \to S$ is flat and $I$ is weakly pro-regular, then it follows that $J$ is weakly pro-regular by~\cite[3.6]{Yekutieli}. 
\end{rem}

\begin{rem}\label{rem:MGM}
The categories of derived torsion and derived complete complexes are equivalent by the MGM equivalence, see~\cite[2.1]{DwyerGreenlees02} and~\cite[1.1]{MGM} for more details. As such, the previous theorem also shows that the categories of $I$-torsion $R$-modules and $J$-torsion $S$-modules are Quillen equivalent if and only if any of the equivalent conditions in Proposition~\ref{prop:equivconditions} hold.
\end{rem}

We note a straightforward corollary of the previous theorem.
\begin{cor}
Let $\theta\colon R \to S$ be a map of commutative rings and let $I$ and $J$ be weakly pro-regular ideals of $R$ and $S$ respectively such that $\sqrt{IS} = \sqrt{J}$. If $S$ is derived complete then the vertical adjunctions in the diagram shown in Figure~\ref{fig:diagram} are Quillen equivalences if and only if $S$ is isomorphic to $R_I^\wedge$ as a ring.
\end{cor}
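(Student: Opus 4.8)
The plan is to deduce the corollary from Theorem~\ref{thm:main2} together with the extra information that $S$ is derived complete. By Theorem~\ref{thm:main2} the vertical adjunctions in Figure~\ref{fig:diagram} are Quillen equivalences if and only if the equivalent conditions of Proposition~\ref{prop:equivconditions} hold; in particular if and only if condition (c) holds, namely that the natural map $R_I^\wedge \to S_J^\wedge$ is an isomorphism of rings. It therefore suffices to show that, under the standing hypothesis that $S$ is derived complete, condition (c) is equivalent to the existence of a ring isomorphism $S \cong R_I^\wedge$.

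The key point I would isolate is the observation that \emph{$S$ is derived complete if and only if the completion map $\gamma_S\colon S \to S_J^\wedge$ is an isomorphism of rings}, i.e.\ $S$ is $J$-adically complete. Indeed, viewing $S$ as a module over itself (which by Lemma~\ref{lem:change of base}(c) agrees with viewing $\resa S$ as a derived $I$-complete $R$-module), Proposition~\ref{prop:implication-completions}(a) shows that $S$ is derived complete if and only if it is $L_0^J$-complete. Now $S$ is free, hence projective, over itself, so it is its own projective resolution and the explicit formula for $L_0^J$ gives $L_0^J S = S_J^\wedge$, with the canonical map $S \to L_0^J S$ being precisely $\gamma_S$ (the map $\varepsilon$ of Lemma~\ref{lem-factorization} is an isomorphism here). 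Thus $S$ is $L_0^J$-complete if and only if $\gamma_S$ is an isomorphism, as claimed. Alternatively, one can argue through Theorem~\ref{thm:local homology and L-completion}: since $S$ concentrated in degree $0$ is dg-projective, one has $\Lambda_J S \simeq \mathbb{L}(S)_J^\wedge = S_J^\wedge$, and under this identification the derived completeness map $\lambda_S$ corresponds to $\gamma_S$.

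Combining these two steps finishes the argument: since $\gamma_S\colon S \xrightarrow{\cong} S_J^\wedge$ is a ring isomorphism, the map $R_I^\wedge \to S_J^\wedge$ of condition (c) is an isomorphism if and only if its composite with $\gamma_S^{-1}$, a ring map $R_I^\wedge \to S$, is an isomorphism, that is, if and only if $S \cong R_I^\wedge$ as rings. The only delicate point is the key observation above: for a general derived complete module one obtains merely $L_0^J$-completeness, which is strictly weaker than $J$-adic completeness (it omits $J$-adic separatedness, cf.\ Proposition~\ref{prop:adic is separated and L}), and it is precisely the projectivity of $S$ over itself, forcing $L_0^J S = S_J^\wedge$, that upgrades derived completeness of $S$ to genuine $J$-adic completeness here.
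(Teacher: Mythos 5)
The paper gives no proof of this corollary at all (it simply calls it ``a straightforward corollary of the previous theorem''), and your deduction is exactly the intended one: combine Theorem~\ref{thm:main2} with condition (c) of Proposition~\ref{prop:equivconditions}, and show that derived completeness of $S$ forces the completion map $\gamma_S\colon S \to S_J^\wedge$ to be an isomorphism. Your justification of that key step is correct and well isolated: since $S$ is projective over itself, $L_0^J S = S_J^\wedge$ with unit map $\gamma_S$, so Proposition~\ref{prop:implication-completions}(a) (or, as you note, Theorem~\ref{thm:local homology and L-completion} applied to the dg-projective complex $S$) upgrades derived completeness of the ring $S$ to genuine $J$-adic completeness. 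This is indeed the only non-formal input.

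One caveat about your final sentence, though the imprecision originates in the statement of the corollary rather than in your argument. What you actually prove is: the adjunctions are Quillen equivalences if and only if the \emph{canonical} ring map $R_I^\wedge \to S_J^\wedge \to S$ (the completion of $\theta$ followed by $\gamma_S^{-1}$) is an isomorphism. Your closing ``that is, if and only if $S \cong R_I^\wedge$ as rings'' replaces this specific map by the existence of an abstract isomorphism, and that replacement is valid in only one direction. With the abstract reading, the ``if'' direction is in fact false: take $k$ a field, $R = k[x]$, $I = (x)$, $S = k[[x]]$, $J = (x)$, and $\theta\colon R \to S$ determined by $\theta(x) = x^2$. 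Both ideals are weakly pro-regular (both rings are Noetherian), $\sqrt{IS} = \sqrt{(x^2)} = (x) = \sqrt{J}$, the ring $S$ is $J$-adically complete hence derived complete, and $S = k[[x]] \cong R_I^\wedge$ abstractly; yet $R/I = k \to S/IS = k[[x]]/(x^2)$ is not an isomorphism, so by Proposition~\ref{prop:equivconditions}(d) and Theorem~\ref{thm:main2} the adjunctions are not Quillen equivalences. So the corollary (and your proof) should be read as asserting that the canonical map $R_I^\wedge \to S$ is a ring isomorphism; with that reading your argument is complete and is precisely the proof the paper left to the reader.
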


\subsection{Change of base along the completion}\label{sec:completionchangeofbase}
Finally we discuss the result of Theorem~\ref{thm:main2} for the completion ring map $\gamma\colon R \to R_I^\wedge$ where $R$ is a commutative ring with an ideal $I$. We denote the ideal $I\cdot R_I^\wedge$ by $\widehat{I}$. 
We now briefly discuss when the ideals $I$ and $\widehat{I}$ are weakly pro-regular.
\begin{lem}\label{lem:wpr along map}
Let $R$ be a commutative ring with an ideal $I$. The ideals $I$ of $R$ and $\widehat{I}$ of $R_I^\wedge$ are weakly pro-regular if either
\begin{itemize}
\item[(a)] $R$ is Noetherian, or,
\item[(b)] the ring map $\gamma\colon R \to R_I^\wedge$ is flat and $I$ is weakly pro-regular.
\end{itemize}
\end{lem}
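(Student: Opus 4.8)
The plan is to handle the two ideals $I$ and $\widehat{I}$ separately, reducing each to a known sufficient criterion for weak pro-regularity rather than unwinding the definition. For the ideal $I$ of $R$ there is nothing to prove beyond citing: under hypothesis (a) it is weakly pro-regular because every ideal of a Noetherian ring is weakly pro-regular by~\cite[4.34]{MGM}, and under hypothesis (b) it is weakly pro-regular by assumption.

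All the content lies in establishing weak pro-regularity of $\widehat{I} = I\cdot R_I^\wedge$. Under hypothesis (a), I would use the classical fact that the $I$-adic completion of a Noetherian ring is again Noetherian, see~\cite[10.26]{AM16}. Thus $R_I^\wedge$ is Noetherian, and applying~\cite[4.34]{MGM} to the ring $R_I^\wedge$ shows that all of its ideals, and in particular $\widehat{I}$, are weakly pro-regular.

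Under hypothesis (b), I would instead appeal to the flat base change behaviour of weak pro-regularity for the completion map $\gamma\colon R \to R_I^\wedge$. The point is that $\widehat{I}$ is by definition the extension $I\cdot R_I^\wedge$ of $I$ along $\gamma$, so the two inputs required by~\cite[3.6]{Yekutieli}, namely flatness of $\gamma$ and weak pro-regularity of $I$, are exactly the assumptions of (b). This yields that $\widehat{I}$ is weakly pro-regular, precisely as recorded in the remark following Theorem~\ref{thm:main2}.

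Since the argument simply assembles cited criteria, I do not expect a genuine obstacle. The only steps requiring attention are essentially bookkeeping: verifying that the completion is Noetherian so that~\cite[4.34]{MGM} applies to $R_I^\wedge$ in case (a), and correctly identifying $\widehat{I}$ with the ideal extended along the flat map $\gamma$ so that~\cite[3.6]{Yekutieli} applies in case (b).
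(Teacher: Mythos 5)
Your proposal is correct and follows essentially the same route as the paper: for (a) use that $R_I^\wedge$ is again Noetherian so that every ideal of it (in particular $\widehat{I}$) is weakly pro-regular, and for (b) cite~\cite[3.6]{Yekutieli} for flat base change of weak pro-regularity along $\gamma$. The paper's proof is just a terser version of yours, with the same two citations doing the work.
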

\begin{proof}
If $R$ is Noetherian, then so is $R_I^\wedge$. Since any ideal in a Noetherian ring is weakly pro-regular, it follows that both $I$ and $\widehat{I}$ are weakly pro-regular. Part (b) follows from~\cite[3.6]{Yekutieli}. 
\end{proof}

\begin{rem}
There are other examples which are not encompassed by the previous lemma, where both $I$ and $\widehat{I}$ are weakly pro-regular, see~\cite[7.2]{Yekutieli18}. 
\end{rem}

Theorem~\ref{thm:main2} shows that provided $\widehat{I}$ is weakly pro-regular, each of the categories of $I$-complete $R$-modules is symmetric monoidally Quillen equivalent to the categories of $\widehat{I}$-complete $R_I^\wedge$-modules. In particular, this recovers a result of Sather-Wagstaff and Wicklein~\cite[4.13]{SWW16}. We also note another generalization of the work of Sather-Wagstaff and Wicklein by Shaul who extended the result to weakly pro-regular ideals, see~\cite[2.13]{Shaul17}.

\bibliographystyle{plain}
\bibliography{references}
\end{document}